\def\E{\ifmmode{\mathbb E}\else{$\mathbb E$}\fi} 
\def\N{\ifmmode{\mathbb N}\else{$\mathbb N$}\fi} 
\def\R{\ifmmode{\mathbb R}\else{$\mathbb R$}\fi} 
\def\Q{\ifmmode{\mathbb Q}\else{$\mathbb Q$}\fi} 
\def\C{\ifmmode{\mathbb C}\else{$\mathbb C$}\fi} 
\def\H{\ifmmode{\mathbb H}\else{$\mathbb H$}\fi} 
\def\Z{\ifmmode{\mathbb Z}\else{$\mathbb Z$}\fi} 
\def\P{\ifmmode{\mathbb P}\else{$\mathbb P$}\fi} 
\def\T{\ifmmode{\mathbb T}\else{$\mathbb T$}\fi} 
\def\SS{\ifmmode{\mathbb S}\else{$\mathbb S$}\fi} 
\def\DD{\ifmmode{\mathbb D}\else{$\mathbb D$}\fi} 
\renewcommand{\d}{\delta}
\newcommand{\e}{\varepsilon}
\newcommand{\del}{\partial}
\newcommand{\ben}{\begin{enumerate}}
\newcommand{\een}{\end{enumerate}}
\newcommand{\be}{\begin{equation}}
\newcommand{\ee}{\end{equation}}
\newcommand{\bea}{\begin{eqnarray}}
\newcommand{\eea}{\end{eqnarray}}
\newcommand{\beastar}{\begin{eqnarray*}}
\newcommand{\eeastar}{\end{eqnarray*}}
\newcommand{\bc}{\begin{center}}
\newcommand{\ec}{\end{center}}
\newcommand{\rf}[1]{\ensuremath{{#1}_{\alpha}}}
\newcommand{\coker}{\textrm{coker}}
\newcommand{\leng}{\operatorname{leng}}
\newcommand{\dist}{\operatorname{dist}}
\theoremstyle{theorem}
\newtheorem{thm}{Theorem}[section]
\newtheorem{cor}[thm]{Corollary}
\newtheorem{lem}[thm]{Lemma}
\newtheorem{prop}[thm]{Proposition}
\theoremstyle{definition}
\newtheorem{defn}[thm]{Definition}
\newtheorem{rem}[thm]{Remark}
\newtheorem*{thm*}{Theorem}
\numberwithin{equation}{section}
\def\R{{\mathbb R}}
\def\Crit{{\hbox{Crit}}}
\def\E{{\mathbb E}}
\def\Z{{\mathbb Z}}
\def\C{{\mathbb C}}
\def\R{{\mathbb R}}
\def\P{{\mathbb P}}
\def\N{{\mathbb N}}
\def\11{{\mathbb I}}
\def\delbar{{\bar \partial}}
\def\dudtau{{\frac{\del u}{\del \tau}}}
\def\dudt{{\frac{\del u}{\del t}}}
\def\area{{\operatorname{area}}}
\def\C{\mathbb{C}}
\def\Z{\mathbb{Z}}
\def\T{\mathbb{T}}
\def\Q{\mathbb{Q}}
\def\E{\ifmmode{\mathbb E}\else{$\mathbb E$}\fi} 
\def\N{\ifmmode{\mathbb N}\else{$\mathbb N$}\fi} 
\def\R{\ifmmode{\mathbb R}\else{$\mathbb R$}\fi} 
\def\Q{\ifmmode{\mathbb Q}\else{$\mathbb Q$}\fi} 
\def\C{\ifmmode{\mathbb C}\else{$\mathbb C$}\fi} 
\def\H{\ifmmode{\mathbb H}\else{$\mathbb H$}\fi} 
\def\Z{\ifmmode{\mathbb Z}\else{$\mathbb Z$}\fi} 
\def\P{\ifmmode{\mathbb P}\else{$\mathbb P$}\fi} 
\def\SS{\ifmmode{\mathbb S}\else{$\mathbb S$}\fi} 
\def\DD{\ifmmode{\mathbb D}\else{$\mathbb D$}\fi} 
\def\R{{\mathbb R}}
\def\Crit{{\hbox{Crit}}}
\def\E{{\mathbb E}}
\def\Z{{\mathbb Z}}
\def\C{{\mathbb C}}
\def\R{{\mathbb R}}
\def\N{{\mathbb N}}
\def\MM{{\mathcal M}}
\def\delbar{{\overline \partial}}
\def\d{\delta}  
\def\e{\varepsilon}
\def\U{\Upsilon}
\def\CA{{\mathcal A}}
\def\CB{{\mathcal B}}
\def\CC{{\mathcal C}}
\def\CD{{\mathcal D}}
\def\CE{{\mathcal E}}
\def\CF{{\mathcal F}}
\def\CG{{\mathcal G}}
\def\CH{{\mathcal H}}
\def\CI{{\mathcal I}}
\def\CJ{{\mathcal J}}
\def\CK{{\mathcal K}}
\def\CL{{\mathcal L}}
\def\CM{{\mathcal M}}
\def\CN{{\mathcal N}}
\def\CO{{\mathcal O}}
\def\CP{{\mathcal P}}
\def\CQ{{\mathcal Q}}
\def\CR{{\mathcal R}}
\def\CP{{\mathcal P}}
\def\CS{{\mathcal S}}
\def\CT{{\mathcal T}}
\def\CU{{\mathcal U}}
\def\CV{{\mathcal V}}
\def\CW{{\mathcal W}}
\def\CX{{\mathcal X}}
\def\CY{{\mathcal Y}}
\def\CZ{{\mathcal Z}}
\def\EJ{\mathfrak{J}}
\def\EM{\mathfrak{M}}
\def\ES{\mathfrak{S}}
\def\BB{\mathib{B}}
\def\BC{\mathib{C}}
\def\BH{\mathib{H}}
\def\rd{\partial}
\def\grad#1{\,\nabla\!_{{#1}}\,}
\def\gradd#1#2{\,\nabla\!_{{#1}}\nabla\!_{{#2}}\,}
\def\om#1#2{\omega^{#1}{}_{#2}}
\def\vev#1{\langle #1 \rangle}
\def\darr#1{\raise1.5ex\hbox{$\leftrightarrow$}
\mkern-16.5mu #1}
\def\Ha{{1\over2}}
\def\ha{{\textstyle{1\over2}}}
\def\fr#1#2{{\textstyle{#1\over#2}}}
\def\Fr#1#2{{#1\over#2}}
\def\rf#1{\fr{\rd}{\rd #1}}
\def\rF#1{\Fr{\rd}{\rd #1}}
\def\df#1{\fr{\d}{\d #1}}
\def\dF#1{\Fr{\d}{\d #1}}
\def\DDF#1#2#3{\Fr{\d^2 #1}{\d #2\d #3}}
\def\DDDF#1#2#3#4{\Fr{\d^3 #1}{\d #2\d #3\d #4}}
\def\ddF#1#2#3{\Fr{\d^n#1}{\d#2\cdots\d#3}}
\def\fs#1{#1\!\!\!/\,}   
\def\Fs#1{#1\!\!\!\!/\,} 
\def\roughly#1{\raise.3ex\hbox{$#1$\kern-.75em
\lower1ex\hbox{$\sim$}}}
\def\ato#1{{\buildrel #1\over\longrightarrow}}
\def\up#1#2{{\buildrel #1\over #2}}
\def\opname#1{\mathop{\kern0pt{\rm #1}}\nolimits}
\def\Re{\opname{Re}}
\def\Im{\opname{Im}}
\def\End{\opname{End}}
\def\dim{\opname{dim}}
\def\Area{\opname{Area}}
\def\group#1{\opname{#1}}
\def\SU{\group{SU}}
\def\U{\group{U}}
\def\SO{\group{SO}}
\def\pr{\prime}
\def\ppr{{\prime\prime}}
\def\bs{\mathib{s}}
\def\supp{\operatorname{supp}}
\def\Dev{\operatorname{Dev}}
\def\leng{\operatorname{leng}}
\def\Per{\operatorname{Per}}
\def\End{\operatorname{End}}
\def\coker{\operatorname{Coker}}
\begin{document}
\quad \vskip1.375truein

\def\mq{\mathfrak{q}}
\def\mp{\mathfrak{p}}
\def\mH{\mathfrak{H}}
\def\mh{\mathfrak{h}}
\def\ma{\mathfrak{a}}
\def\ms{\mathfrak{s}}
\def\mm{\mathfrak{m}}
\def\mn{\mathfrak{n}}
\def\mz{\mathfrak{z}}
\def\mw{\mathfrak{w}}
\def\Hoch{{\tt Hoch}}
\def\mt{\mathfrak{t}}
\def\ml{\mathfrak{l}}
\def\mT{\mathfrak{T}}
\def\mL{\mathfrak{L}}
\def\mg{\mathfrak{g}}
\def\md{\mathfrak{d}}
\def\mr{\mathfrak{r}}

\def\CA{{\mathcal A}}
\def\CB{{\mathcal B}}
\def\CC{{\mathcal C}}
\def\CD{{\mathcal D}}
\def\CE{{\mathcal E}}
\def\CF{{\mathcal F}}
\def\CG{{\mathcal G}}
\def\CH{{\mathcal H}}
\def\CI{{\mathcal I}}
\def\CJ{{\mathcal J}}
\def\CK{{\mathcal K}}
\def\CL{{\mathcal L}}
\def\CM{{\mathcal M}}
\def\CN{{\mathcal N}}
\def\CO{{\mathcal O}}
\def\CP{{\mathcal P}}
\def\CQ{{\mathcal Q}}
\def\CR{{\mathcal R}}
\def\CP{{\mathcal P}}
\def\CS{{\mathcal S}}
\def\CT{{\mathcal T}}
\def\CU{{\mathcal U}}
\def\CV{{\mathcal V}}
\def\CW{{\mathcal W}}
\def\CX{{\mathcal X}}
\def\CY{{\mathcal Y}}
\def\CZ{{\mathcal Z}}
\def\EJ{\mathfrak{J}}
\def\EM{\mathfrak{M}}
\def\ES{\mathfrak{S}}
\def\BB{\mathib{B}}
\def\BC{\mathib{C}}
\def\BH{\mathib{H}}

\def\rd{\partial}
\def\grad#1{\,\nabla\!_{{#1}}\,}
\def\gradd#1#2{\,\nabla\!_{{#1}}\nabla\!_{{#2}}\,}
\def\om#1#2{\omega^{#1}{}_{#2}}
\def\vev#1{\langle #1 \rangle}
\def\darr#1{\raise1.5ex\hbox{$\leftrightarrow$}
\mkern-16.5mu #1}
\def\Ha{{1\over2}}
\def\ha{{\textstyle{1\over2}}}
\def\fr#1#2{{\textstyle{#1\over#2}}}
\def\Fr#1#2{{#1\over#2}}
\def\rf#1{\fr{\rd}{\rd #1}}
\def\rF#1{\Fr{\rd}{\rd #1}}
\def\df#1{\fr{\d}{\d #1}}
\def\dF#1{\Fr{\d}{\d #1}}
\def\DDF#1#2#3{\Fr{\d^2 #1}{\d #2\d #3}}
\def\DDDF#1#2#3#4{\Fr{\d^3 #1}{\d #2\d #3\d #4}}
\def\ddF#1#2#3{\Fr{\d^n#1}{\d#2\cdots\d#3}}
\def\fs#1{#1\!\!\!/\,}   
\def\Fs#1{#1\!\!\!\!/\,} 
\def\roughly#1{\raise.3ex\hbox{$#1$\kern-.75em
\lower1ex\hbox{$\sim$}}}
\def\ato#1{{\buildrel #1\over\longrightarrow}}
\def\up#1#2{{\buildrel #1\over #2}}
\def\opname#1{\mathop{\kern0pt{\rm #1}}\nolimits}
\def\Re{\opname{Re}}
\def\Im{\opname{Im}}
\def\End{\opname{End}}
\def\dim{\opname{dim}}
\def\vol{\opname{vol}}
\def\group#1{\opname{#1}}
\def\SU{\group{SU}}
\def\U{\group{U}}
\def\SO{\group{SO}}
\def\pr{\prime}
\def\ppr{{\prime\prime}}
\def\bs{\mathib{s}}
\def\dudtau{\frac{\partial u}{\partial \tau}}
\def\dudt{\frac{\partial u}{\partial t}}
\def\dvdtau{\frac{\partial v}{\partial \tau}}
\def\dvdt{\frac{\partial v}{\partial t}}
\def\supp{\operatorname{supp}}
\def\Cal{\operatorname{Cal}}
\def\coker{\operatorname{coker}}
\def\diam{\operatorname{diam}}
\def\area{\operatorname{area}}
\def\Graph{\operatorname{Graph}}
\def\Dev{\operatorname{Dev}}
\def\Fix{\operatorname{Fix}}
\def\curv{\operatorname{curv}}
\def\Spec{\operatorname{Spec}}
\def\dist{\operatorname{dist}}
\def\Crit{\operatorname{Crit}}
\def\codim{\operatorname{codim}}
\def\Per{\operatorname{Per}}
\def\leng{\operatorname{leng}}
\def\Area{\operatorname{Area}}
\def\Image{\operatorname{Image}}


\def\mq{\mathfrak{q}}
\def\mH{\mathfrak{H}}
\def\mh{\mathfrak{h}}
\def\ma{\mathfrak{a}}
\def\ms{\mathfrak{s}}
\def\mm{\mathfrak{m}}
\def\mn{\mathfrak{n}}

\def\Hoch{{\tt Hoch}}
\def\mt{\mathfrak{t}}
\def\ml{\mathfrak{l}}
\def\mT{\mathfrak{T}}
\def\mL{\mathfrak{L}}
\def\mg{\mathfrak{g}}
\def\md{\mathfrak{d}}

\title[Localization of Floer complex]{
Localization of Floer homology of engulfable topological
Hamiltonian loop}

\author{Yong-Geun Oh}
\thanks{This work is supported by the Institute for Basic Sciences and partially
supported by US NSF grant \# DMS 0904197}

\address{ IBS Center for Geometry and Physics, Institute for Basic Sciences,
Pohang, Korea \& Department of Mathematics, POSTECH, Pohang, KOREA,
\& Department of Mathematics, University of Wisconsin, Madison, WI
53706}
\email{oh@math.wisc.edu}

\begin{abstract}
Localization of Floer homology is first introduced by Floer \cite{floer:fixed} in the
context of Hamiltonian Floer homology. The author
employed the notion in the Lagrangian context for the
pair $(\phi_H^1(L),L)$ of compact Lagrangian submanifolds in
tame symplectic manifolds $(M,\omega)$ in \cite{oh:newton,oh:imrn}
for a compact Lagrangian submanifold $L$ and $C^2$-small Hamiltonian $H$.
In this article, motivated by the study of topological Hamiltonian dynamics,
we extend the localization process
for any engulfable Hamiltonian path $\phi_H$ whose time-one map $\phi_H^1$
is sufficiently $C^0$-close to the identity (and also to the case of triangle
product), and prove that the value of local Lagrangian spectral invariant
is the same as that of global one. Such a Hamiltonian path naturally
occurs as an approximating sequence of engulfable topological Hamiltonian loop.
We also apply this localization to the graphs $\Graph \phi_H^t$ in
$(M\times M, \omega\oplus -\omega)$ and localize the Hamiltonian Floer
complex of such a Hamiltonian $H$. We expect that this study will play an important role in the study of
homotopy invariance of the spectral invariants of topological Hamiltonian.
\end{abstract}

\keywords{Local Floer homology, engulfable topological Hamiltonian loop,
$J_0$-convex domain, maximum principle, thick-thin decomposition, handle sliding lemma}

\date{November 14, 2011; revised on May 28, 2013}

\maketitle

\hskip0.3in MSC2010: 53D05, 53D35, 53D40; 28D10.
\medskip

\tableofcontents

\section{Introduction and the main results}
\label{sec:intro}

The construction of the local version of the Floer homology was introduced by
Floer \cite{floer:fixed}. The present author applied this construction to the
Lagrangian context and defined the local Floer homology, denoted by $HF(H,L;U)$,
which singles out the contribution from the Floer trajectories whose images
are contained in a given Darboux neighborhood $U$ of $L$ in $M$.
Such an isolation of the contribution is proven to be possible
and the resulting Floer homology is isomorphic to the singular homology $H_*(L)$ (with $Z_2$-coefficients)
in \cite{oh:imrn}, \emph{ provided $H$ is $C^2$-small}. This $C^2$-smallness
is used, conspicuously in \cite{oh:imrn}, so that first
\be\label{eq:isolated}
\phi_{H^t}(L) \subset V \subset \overline V \subset U
\ee
holds for all $t \in [0,1]$, and then the `thick-thin'
decomposition of the Floer trajctories exists. The necessity of such a decomposition
is highlighted for the Floer moduli spaces for the boundary map, but
its necessity is less conspicuous for that of the chain map in \cite{oh:imrn}:

But this latter was further scrutinized and exploited by Chekanov in his
study of displacement energy in \cite{chekanov:newton,chekanov:area}.
It follows from his argument in \cite{chekanov:newton} that the quasi-isomorphism
property of thin part of Floer chain maps between the local Floer complex
$H$ and the Morse complex of $f$ holds for a sufficiently small $\e> 0$
as long as $\|H\| < \frac{1}{2} A(M,L,J_0)$ \emph{as long as the thick-thin decomposition
exists for the chain map}. Here  $A(M,L,J_0)$ is  the smallest area of the non-constant
$J_0$-holomorphic spheres or discs attached to $L$.
(Chekanov denotes $\sigma(M,L,J_0)$ instead of $A(M,L,J_0)$.)
The required thick-thin decomposition was established via the thick-thin
decomposition of associated Floer moduli spaces into those with big areas
and those with very small areas. It was proved in \cite{oh:imrn} that this
dichotomy exists when $H$ is $C^2$-small by proving that all the
thick trajectories have symplectic area greater than, say $\frac{1}{2} A(M,L,J_0)$,
by a variation of Gromov-Floer compactness as $\phi_H^1(L) \to L$ in $C^1$-topology
(or $H \to 0$ in $C^2$-topology). (We would like to emphasize
that this convergence argument is not the standard Gromov-Floer type
compactness argument since the limiting configuration is degenerate.
The precise study of this convergence belongs to the realm of the so called adiabatic
limit in the sense of \cite{foh:ajm,oh:newton,oh:dmj}. In \cite{oh:imrn},
it was enough to establish a non-constant component in the `limit' which
can be proved by a simple convergence argument under an energy bound.)

However such a \emph{dichotomy via the area} does not exist when $\phi_H^1(L) \to L$
in $C^0$-topoogy (or $H \to 0$ in $C^1$-topology). We now motivate possible
importance of such a study in relation to topological Hamiltonian dynamics
\'a la \cite{oh:hameo1}, \cite{oh:hameo2}.


\subsection{Topological Hamiltonian loops}
\label{subsec:top-flows}

In \cite{oh:hameo1}, M\"uller and the author introduced the notion of
hamiltonian topology on the space
$$
\CP^{ham}(Symp(M,\omega),id)
$$
of Hamiltonian flows $\lambda:[0,1] \to Symp(M,\omega)$ with
$\lambda(t) = \phi_H^t$ for some time-dependent Hamiltonian $H$.
We first recall the definition of this hamiltonian topology.

Following the notations of \cite{oh:hameo1}, we denote by $\phi_H$ the
Hamiltonian path
$$
\phi_H: t \mapsto \phi_H^t; \, [0,1] \to Ham(M,\omega)
$$
and by $\Dev(\lambda)$ the associated normalized Hamiltonian
\be\label{eq:Dev}
\Dev(\lambda) := \underline H, \quad \lambda = \phi_H
\ee
where $\underline H$ is defined by
\be\label{eq:underlineH}
\underline H(t,x) = H(t,x) - \frac{1}{\vol_\omega(M)} \int_M H(t,x)\, \omega^n.
\ee

\begin{defn}\label{defn:hamtopology} Let $(M,\omega)$ be a closed symplectic
manifold. Let $\lambda, \, \mu$ be smooth Hamiltonian paths.
The \emph{hamiltonian topology} of Hamiltonian paths is the metric topology induced by the metric
\be\label{eq:strong}
d_{ham}(\lambda,\mu): = \overline d(\lambda,\mu) +
\operatorname{leng}(\lambda^{-1}\mu).
\ee
\end{defn}

Now we recall the notion of topological Hamiltonian flows and
Hamiltonian homeomorphisms introduced in \cite{oh:hameo1}.

\begin{defn}[$L^{(1,\infty)}$ topological Hamiltonian flow]\label{defn:topflow} A continuous map
$\lambda: \R \to Homeo(M)$ is called a topological Hamiltonian flow
if there exists a sequence of smooth Hamiltonians $H_i: \R \times M
\to \R$ satisfying the following:
\begin{enumerate}
\item $\phi_{H_i} \to \lambda$ locally uniformly on $\R \times M$.
\item the sequence $H_i$ is Cauchy in the $L^{(1,\infty)}$-topology locally in time
and so has a limit $H_\infty$ lying in $L^{(1,\infty)}$ on any compact interval $[a,b]$.
\end{enumerate}
We call any such $\phi_{H_i}$ or $H_i$ an \emph{approximating sequence} of $\lambda$.
We call a continuous path $\lambda:[a,b] \to Homeo(M)$ a {\it
topological Hamiltonian path} if it satisfies the same conditions
with $\R$ replaced by $[a,b]$, and the limit $L^{(1,\infty)}$-function
$H_\infty$ called a \emph{$L^{(1,\infty)}$ topological Hamiltonian} or just
a \emph{topological Hamiltonian}.
\end{defn}

We call a topological Hamiltonian path $\lambda$ a loop if $\lambda(0) = \lambda(1)$.
Any approximating sequence $\phi_{H_i}$ of a topological Hamiltonian loop $\lambda$
has the property $\phi_{H_i}^1 \to id$ in addition to the properties
(1), (2) of Definition \ref{defn:topflow}.

\subsection{Thick-thin decomposition for $C^0$-small Lagrangian isotopy}

Motivated by the discussion laid out in the previous subsection, we are led to
analyze the behaviors of the Floer moduli space and of the Floer complex as
the boundary Lagrangian submanifold $\phi_{H_i}^1(L)$ for a sequence of
Hamiltonian diffeomorphisms $\phi_{H_i}^1 \to id$ in $C^0$-topology.

Unlike the case of $C^2$-small Hamiltonians, the dichotomy via the areas
described in the beginning of the present paper do not exist for the case of $C^1$-small
Hamiltonian $H$. The main purpose of the present paper is to generalize the construction
local Floer homology and its computation for the case where
the $C^2$-smallness of $H$ (or $C^1$-smallness of $\phi_H$) is replaced
by the the weaker hypothesis, the $C^0$-smallness of the time-one map
$\phi_H^1: t\mapsto \phi_H^t$ for any engulfable Hamiltonian path $\phi_H$.
There are two major differences between the cases of $C^1$-topology and of the $C^0$-topology
of Hamiltonian paths (or between the $C^2$-smallness of $H$ and $C^1$-smallness of $\phi_H$).
The first fundamental issue is that thin trajectories might not have small area but
could have large area for the
$C^0$-close Lagrangian submanifolds unlike the $C^1$-close case of Lagrangian submanifolds.
Because of this, instead of using the areas as in \cite{oh:imrn},
we will use the \emph{maximum principle} to single out `thin' trajectories
which turns out to be the best way of obtaining such decomposition even for the
$C^2$-small $H$'s in hindsight. However the thick-thin decomposition acquired
via the maximum principle does not differentiate the action filtration any more.
The second more technical issue is that
the $C^0$-topology is a priori too weak to uniformly control the analytical behavior of
pseudo-holomorphic curves with boundary lying on $\phi_H^1(L)$ in general
partially because we cannot establish uniform area bounds even for the thin
trajectories, while $C^1$-topology of Lagrangian boundary condition controls analysis of
pseudo-holomorphic curves.

To describe the problem in a precise manner, we need some digression.

Let $L \subset (M,\omega)$ be a compact Lagrangian submanifold and
let $V \subset \overline V \subset U$ be a pair of Darboux
neighborhoods of $L$. We denote $\omega = - d\Theta$ on $U$ where
$\Theta$ is the Liouville one-form on $U$ regarded as an open
neighborhood of the zero section of $T^*L$. Following \cite{oh:dmj},
\cite{spaeth}, we introduce the following notion. (Similar concept
was previously used by Laudenbach \cite{laud} in the context of
classical symplectic topology.)

We measure the size of the Darboux neighborhood $V$ by the following constant
\be\label{eq:dVTheta}
d(V,\Theta) := \max_{x \in V} |p(x)|, \quad x = (q(x),p(x)).
\ee
This constant is bounded away from $0$ and so
there exists some $\eta > 0$ depending only on
$(V,-d\Theta)$ (and so only on $(M,\omega)$) such that if $d_{C^0}(\phi_H^1,id) < \eta$, then
$\phi_H^1(L) \subset V$.

\begin{defn} We call an isotopy of Lagrangian submanifold $\{L_t\}_{0 \leq s \leq 1}$
of $L$ is called \emph{$V$-engulfable} if there exists a Darboux
neighborhood $V$ of $L$ such that $L_s \subset V$ for all $s$.
When we do not specify $V$, we just call the isotopy engulfable for $L$.

We call a (topological) Hamiltonian path $\phi_H$ engulfable if its graph
$\Graph \phi_H^t$ is engulfable in a Darboux neighborhood of the
diagonal $\Delta$ of $(M \times M, \omega \oplus -\omega)$.
\end{defn}

Following Weinstein's notation, we denote by $\frak{Iso}(L)$ the set of
Hamiltonian deformations of $L$. Define
\bea\label{eq:IsodL} \CH_{\delta}^{engulf}(L;V) & = & \{H
\mid \phi_H^t(L) \subset V\,  \forall t \in [0,1], \,
\overline d(\phi_H^1,id) \leq \delta\}\\
\frak{Iso}_\delta^{engulf}(L;V) & = & \{L' \in \frak{Iso}(L) \mid L' = \phi_H^1(L), \,
H \in \CH_{\delta}^{engulf}(L;V)\}.
\eea

One of the main goals of the present paper is to extend the notion of local Floer
homology introduced in \cite{floer:fixed,oh:imrn} for the $C^2$-small Hamiltonian $H$
to the case of $H$ such that
\begin{enumerate}
\item its Hamiltonian paths $\phi_H$ are $V$-engulfable,
\item its time-one map $\phi_H^1$ is $C^0$-small.
\end{enumerate}
Such a sequence of smooth Hamiltonian
paths naturally occurs as an approximating sequence of engulfable
topological Hamiltonian loop (based at the identity).

We would like to remark that it is established in \cite{oh:imrn} that if $\|H\|_{C^2} < C$
for sufficiently small $C> 0$, then
the following automatically hold:
\begin{enumerate}
\item its Hamiltonian paths $\phi_H$ is $V$-engulfable,
\item and the uniform area bounds of the associated
connecting Floer trajectories on $V$, where we regard $V$ as a neighborhood of
the zero section in the cotangent bundle so that we use the classical action
functional to measure the actions.
\item The path spaces $\CP(\phi_H^1(L),L)$ or $\CP(L,L)$ carry a distinguished
connected component on which the actions of any Hamiltonian chord become
uniformly small.
\end{enumerate}
In \cite{oh:imrn}, we mainly used the \emph{area} of Floer trajectories
to obtain the thick-thin decomposition of the Floer boundary operator
$\del = \del_0 + \del'$, \emph{which is equivalent to the corresponding dichotomy
in terms of filtration changes under the boundary map (or the Floer chain map)
for a $C^2$-small Hamiltonian $H$.}

However for the Hamiltonian $H$ of our interest in the present paper,
both properties (2) and (3) fail to uniformly hold even when we let $d_{C^0}(\phi_H^1,id) \to 0$.
Therefore there do neither exist a uniform gap in the filtration nor
uniform control of the filtration of the Floer complex
(or of the action bounds of the associated Hamiltonian chords).
This is a new phenomenon for the localization in the current topological Hamiltonian
context. Because of this lack of control of the filtration, we will instead use the more
geometric version of thick-thin decomposition mainly using the $C^0$
property of $\phi_H^1$ by exploiting the \emph{maximum principle.}

For this purpose, we fix a time-independent almost complex structure
$J_0$ that satisfies $J_0 \equiv J_g$ on $V$ where $J_g$ is the canonical
(Sasakian) almost complex structure on $V$ as a subset $T^*L$ which is induced by a Riemannian
metric $g$ on $L$, and suitably interpolated to outsider of $U$.
(We refer to \cite{floer:witten}, pp 321-323 \cite{oh:imrn} for the precise
description of $J_g$ and $J_0$ respectively.) We may assume $V$ has $J_0$-convex
boundary. We denote by
$$
\CJ_\omega(V,J_g)
$$
the set of such almost complex structures.

\subsection{Comparison of two Cauchy-Riemann equations}
\label{subsec:comparison}

For each given pair $(J,H)$, we consider the perturbed Cauchy-Riemann equation
\be\label{eq:CRHJ}
\begin{cases}
\frac{\del u}{\del \tau} + J\Big(\frac{\del u}{\del t}
- X_H(u)\Big) = 0\\
u(\tau,0) \, u(\tau,L_1) \in L
\end{cases}
\ee
which defines the Floer complex $CF_*(L,L;H)$ generated (over a suitable Novikov ring) by the set
$\CC hord(H;L,L)$ defined by
\be\label{eq:Chord}
\CC hord(H;L,L)): = \{z:[0,1] \to M \mid \dot z = X_H(t,z), \, z(0), \, z(1) \in L\}.
\ee
We call any such element $z$ in $\CC hord(H;L,L))$ a Hamiltonian chord
of $L$. This Cauchy-Riemann equation is called the \emph{dynamical version} in \cite{oh:jdg}.

Equivalently one can also consider the \emph{genuine} Cauchy-Riemann equation
\be\label{eq:dvtildeJ}
\begin{cases}\dvdtau + J^H \dvdt = 0 \\
v(\tau ,0) \in \phi_H^1(L), \,
v(\tau ,1) \in L
\end{cases}
\ee
for the path $u:\R \to \CP(\phi_H^1(L),L)$
$$
\CP(\phi_H^1(L),L) = \{\gamma: [0,1] \to T^*N \mid \gamma (0) \in \phi_H^1(L), \,  \gamma(1) \in L\}
$$
and $J^H_t = (\phi_H^t(\phi_H^1)^{-1})_*J_t$. We note that whenever $\supp \phi_H \subset V$,
$J^H_t \in \CJ_\omega(V,J_g)$ for all $t \in [0,1]$.
We call this version the \emph{geometric version}.
The upshot is that there is a filtration preserving isomorphisms between the dynamical version and
the geometric version of the Lagrangian Floer theories.

We now describe the geometric version of the Floer homology in some more details.
We denote by $\widetilde \CM(L_H,L;J^H)$ the set
of finite energy solutions and $\CM(L_H,L;J^H)$ to be its quotient by $\R$-translations.
In the unobstructed case \cite{fooo:book}, this gives rise to the geometric
version of the Floer homology $HF_*(\phi_H^1(L),L, \widetilde J)$ of the type
\cite{floer:Morse} whose generators are the intersection
points of $\phi_H^1(L)\cap L$. An advantage of this version is that it depends only
on the Lagrangian submanifold $(\phi_H^1(L),L)$ depending loosely on $H$.

The following is a straightforward to check but is a crucial lemma.

\begin{lem}\label{lem:equiv}
\begin{enumerate}
\item The map $\Phi_H: \phi_H^1(L) \cap L \to \CC hord(H;L,L)$ defined by
$$
x \mapsto z_x^H(t)= \phi_H^t\left(\phi_H^{-1}(x)\right)
$$
gives rise to the one-one correspondence between the set
$\phi_H^1(L) \cap L \subset \CP(\phi_H^1(L), L)$ as constant paths and the set of
solutions of Hamilton's equation of $H$.
\item The map $a \mapsto \Phi_H(a)$ also defines a one-one
correspondence from the set of solutions of \eqref{eq:dvtildeJ} and that of
\be\label{eq:dvJH}
\begin{cases}
\dvdtau + J^H \dvdt = 0 \\
v(\tau ,0) \in \phi_H^1(L),  \,  v(\tau ,1) \in L
\end{cases}
\ee
where $J^H = \{ J^H_t \} , J^H_t: = (\phi^t_H (\phi^1_H)^{-1})^* J_t $.
Furthermore, \eqref{eq:dvJH} is regular if and only if \eqref{eq:CRHJ} is regular.
\end{enumerate}
\end{lem}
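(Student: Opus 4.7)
The plan is to reduce both statements to an explicit change of variables by the time-dependent symplectomorphism $\psi_t := \phi_H^t\circ(\phi_H^1)^{-1}$, which interpolates from $\psi_0 = (\phi_H^1)^{-1}$ to $\psi_1 = \mathrm{id}$ and therefore carries the boundary pair $(\phi_H^1(L),L)$ to $(L,L)$. Part (1) will be the restriction of this correspondence to constant paths, while part (2) will be its natural pointwise extension to Floer trajectories; the regularity statement will then follow because the two linearizations become conjugate via an invertible bundle map.

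For part (1), I first observe that any $z\in \CC hord(H;L,L)$ has the form $z(t) = \phi_H^t(z(0))$, so that $x := z(1) = \phi_H^1(z(0))$ lies in $\phi_H^1(L)\cap L$. Conversely, for $x\in \phi_H^1(L)\cap L$, the path $z_x^H(t) := \phi_H^t((\phi_H^1)^{-1}(x))$ starts at $(\phi_H^1)^{-1}(x)\in L$ (because $x\in \phi_H^1(L)$), ends at $x\in L$, and is an integral curve of $X_H$ by construction. The assignments $z\mapsto z(1)$ and $x\mapsto z_x^H$ are manifestly mutual inverses, and $\Phi_H$ is the latter.

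For part (2), I extend $\Phi_H$ pointwise by $\Phi_H(v)(\tau,t) := \psi_t(v(\tau,t))$; the boundary conditions then transform exactly as in (1). Setting $u := \psi_t(v)$ and using $\tfrac{d}{ds}\phi_H^s = X_H^s\circ\phi_H^s$, the chain rule gives
\begin{equation*}
\partial_\tau u = d\psi_t\cdot \partial_\tau v, \qquad \partial_t u - X_H^t(u) = d\psi_t\cdot \partial_t v.
\end{equation*}
Substituting these identities into \eqref{eq:CRHJ}, applying $(d\psi_t)^{-1}$, and invoking the defining relation $J^H_t = \psi_t^* J_t$ converts \eqref{eq:CRHJ} into \eqref{eq:dvJH}; each step is reversible. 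Since every $\psi_t$ is a diffeomorphism, $\Phi_H$ is a pointwise bijection between the two solution sets.

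For regularity, the linearized Cauchy--Riemann operators at corresponding solutions $u$ and $v$ are intertwined by the fiberwise invertible bundle automorphism induced by $d\psi_t$; this intertwiner extends to an isomorphism on the relevant Sobolev completions and hence identifies kernels and cokernels, so one linearization is surjective exactly when the other is. The only bookkeeping step requiring care is the sign and composition order in the time-derivative of $\psi_t$; this is the classical dynamical/geometric dictionary in Lagrangian Floer theory and presents no genuine analytic obstacle, which is why the lemma is stated to be straightforward to check.
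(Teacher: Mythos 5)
Your proof is correct and is exactly the standard change-of-variables argument via the interpolating isotopy $\psi_t=\phi_H^t(\phi_H^1)^{-1}$; the paper itself provides no proof, dismissing the lemma as ``straightforward to check,'' and your computation correctly supplies the missing verification, including the key identity $J_t\circ d\psi_t = d\psi_t\circ J^H_t$ (which is the pullback relation $J^H_t=\psi_t^*J_t$ as stated in the lemma — note the paper's preceding text writes a pushforward $(\cdot)_*$, an apparent typo that your derivation silently resolves in favor of the pullback). You also implicitly and correctly read the lemma's reference to ``solutions of \eqref{eq:dvtildeJ}'' as meaning solutions of the dynamical equation \eqref{eq:CRHJ} — as written, \eqref{eq:dvtildeJ} and \eqref{eq:dvJH} are literally the same equation, which is clearly not the intended comparison given the regularity clause comparing \eqref{eq:dvJH} to \eqref{eq:CRHJ}.
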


Once we have transformed \eqref{eq:CRHJ} to \eqref{eq:dvJH}, we can further
deform $J^H$ to the constant family $J_0$ inside $\CJ_\omega(V,J_g)$ and consider
\be\label{eq:CRdvJ0}
\begin{cases}
\dvdtau + J_0 \dvdt = 0 \\
v(\tau ,0) \in \phi_H^1(L),  \, v(\tau ,1) \in L
\end{cases}
\ee
for each given $J_0\in \CJ_\omega$, a time-independent family. We will
fix a generic $J_0$ in the rest of the paper and assume $L$ is transversal to
$\phi_H^1(L)$ by considering a $C^\infty$-small perturbation of $H$ if necessary.
This latter deformation preserves the filtration of
the associated Floer complexes \cite{oh:jdg}. A big advantage of
considering this equation is that it enables us to study the behavior of
spectral invariants for a sequence of $L_i$ converging to $L$
\emph{in Hausdorff distance}.

The following thick-thin decomposition of the Floer moduli spaces  is a crucial ingredient. This is a variation of
Proposition 4.1 \cite{oh:imrn} in the $C^0$ context.

\begin{thm}[Compare with Proposition 4.1 \cite{oh:imrn}]\label{thm:Sigma-d}
Let $L \subset (M,\omega)$ be a compact
Lagrangian submanifold and let $V \subset \overline V \subset U$ be
a pair of Darboux neighborhoods of $L$. Consider a $V$-engulfable
Hamiltonian path $\phi_H$. Then whenever
$\overline d(\phi_H^1,id) \leq \delta$ for any $\delta <  d(V,\Theta)$,
any solution of $v$ of \eqref{eq:CRdvJ0} satisfies one of the following alternatives:
\begin{enumerate}
\item Either
\be\label{eq:verythin}
\Image v \subset D_\delta(L) \subset V
\ee
where $D_\delta(L)$ is the $\delta$-neighborhood of $L$.
\item or $\Image v \not \subset V$. In this case, we also have
$\int v^*\omega \geq C(J_0,V)$ where $C(J_0,V) > 0$ is a constant
depending only on $\delta$ and $V$.
\end{enumerate}
\end{thm}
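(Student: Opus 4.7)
First I would introduce the strictly $J_0$-plurisubharmonic function $\rho(q,p) := \tfrac12 |p|_g^2$ on $V$, viewed as a neighborhood of the zero section in $T^*L$. Since $J_0|_V \equiv J_g$ is the Sasakian almost complex structure, one has $-dd^c_{J_0}\rho \geq 0$ with strict positivity off $L$, so that $\rho \circ v$ becomes subharmonic on the portion of the strip $\R \times [0,1]$ sent into $V$. The hypothesis $\overline d(\phi_H^1, id) \leq \delta < d(V,\Theta)$ together with $V$-engulfability forces $\phi_H^1(L) \subset D_\delta(L)$, and in Darboux coordinates $|p(x)| \leq \delta$ for all $x \in \phi_H^1(L)$; moreover the asymptotic endpoints $z^\pm \in \phi_H^1(L) \cap L$ of $v$ lie on $L$ and hence have $|p|=0$.

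For Case~(1), assuming $\Image v \subset V$, I would argue that $\rho \circ v$ is subharmonic on the whole strip, vanishes on $\R \times \{1\}$ and at both asymptotes, and is bounded by $\delta^2/2$ on $\R \times \{0\}$. A Hopf-type argument as in the proof of Proposition~4.1 of \cite{oh:imrn}, using that $L$ is Lagrangian and $J_0$-invariant along its normal bundle, rules out an interior maximum of $\rho \circ v$ along $\R \times \{1\}$; combined with the ordinary maximum principle applied on exhausting rectangles $[-T,T] \times [0,1]$ and the limit $T \to \infty$, this would yield $\rho \circ v \leq \delta^2/2$ everywhere, which is precisely the containment \eqref{eq:verythin}.

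For Case~(2), when $\Image v \not\subset V$, I would consider $\Omega := v^{-1}(M \setminus \overline V)$, a nonempty open subset of $\R \times [0,1]$. Because the two strip boundaries land in $D_\delta(L) \subsetneq V$ and the asymptotes lie on $L \subset V$, the closure $\overline \Omega$ is compact and bounded away from $\R \times \{0,1\}$ and from $\{\tau = \pm \infty\}$. Hence $v|_{\overline \Omega}$ is a bona fide compact $J_0$-holomorphic curve whose boundary maps into $\partial V$. The $J_0$-convexity of $\partial V$ then feeds into the standard monotonicity inequality for holomorphic curves touching a convex hypersurface, producing the uniform lower bound $\int v^*\omega \geq C(J_0,V) > 0$ depending only on $(V, J_0, \partial V)$.

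The hard part will be the boundary and asymptotic analysis in Case~(1): one must justify the maximum principle up to the corners of the strip and uniformly as $\tau \to \pm\infty$, in a setting where $\phi_H^1(L)$ is only $C^0$-close to $L$ (not $C^1$-close), so the usual ``graph over $L$'' description is unavailable. What makes this tractable is that $\rho \circ v$ only needs to be controlled \emph{pointwise}, not via its derivatives, so the $C^0$-control of $\phi_H^1(L)$ supplied by the $\overline d$-distance is already enough. This is precisely why one must abandon the area-based dichotomy of \cite{oh:imrn} in favor of a maximum-principle-based one in the present $C^0$ context.
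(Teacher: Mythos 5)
Your treatment of alternative (1) is essentially the paper's proof, just spelled out more explicitly: the paper likewise applies the maximum principle to the fiber-norm function along $v$ inside the Darboux neighborhood and then invokes the boundary condition to bound $\max_{z\in\R\times\{0,1\}} d(v(z),o_L)$ by $d_{\text{H}}(\phi_H^1(L),L) \leq \delta$. Your remark that $C^0$-control (rather than $C^1$-control) of $\phi_H^1(L)$ suffices because only a \emph{pointwise} bound on $\rho\circ v$ is needed is exactly the point the paper is making.

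Your argument for alternative (2) has a genuine gap, and it is a different route from the paper's. You restrict $v$ to $\overline\Omega = \overline{v^{-1}(M\setminus\overline V)}$ and assert that a compact $J_0$-holomorphic curve with boundary on the $J_0$-convex hypersurface $\partial V$ satisfies a uniform lower area bound. This is not true: convexity of $\partial V$ controls curves approaching $\partial V$ from \emph{inside} $V$, but a small holomorphic piece sticking slightly \emph{outside} $V$ with boundary on $\partial V$ can have arbitrarily small area, so there is no ``monotonicity for convex hypersurfaces'' producing $C(J_0,V)>0$ depending only on $(V,J_0)$ from this data alone. The quantitative input that makes the bound uniform is precisely the one you discarded when you cut the curve at $\partial V$: by the Lagrangian boundary condition together with $V$-engulfability, the entire image $v(\partial(\R\times[0,1]))$ lies in $D_\delta(L)$, which is a \emph{compactly} contained subset of $V$ since $\delta < d(V,\Theta)$. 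Hence any exit point $v(z)\notin V$ is at distance at least $r_0 := \dist(M\setminus V,\, D_\delta(L)) > 0$ from the curve's boundary, and the standard interior monotonicity formula applied to the ball $B_{r_0}(v(z))$ gives $\int v^*\omega \geq C' r_0^2$, with $C'$ the monotonicity constant of $(M,\omega,J_0)$. This is what the paper does. To repair your version you would need to combine the compactness of $\overline\Omega$ with the positive gap between $\partial V$ and $D_\delta(L)$, at which point it collapses to the same argument. (Incidentally, the paper remarks that the area bound in alternative (2) is not used elsewhere in the paper; only the containment alternative is, so this gap does not propagate, but it does invalidate the proof of part (2) as you wrote it.)
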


We call $v$ a \emph{thin} trajectory if $\Image v \subset V$ and a \emph{thick} trajectory
otherwise. We call a thin trajectory \emph{very thin}
if it satisfies \eqref{eq:verythin} in addition. This theorem basically
says that all thin trajectories are indeed very thin and all thick trajectories
have area bounded below away from zero.
The proof of this theorem is an easy application of maximum principle
on the $J_0$-convex domain $V$ and the monotonicity formula for the $J_0$-holomorphic curves.
We would like to emphasize that the meaning of thin trajectories here is
different from that of \cite{oh:imrn} (or the `short' trajectories in \cite{chekanov:newton}) in that
\emph{ they could have large areas} unlike the case of latters.

This theorem enables us to define the local Floer homology
in a well-defined way by counting thin trajectories.
We denote this local Floer homology by
$$
HF_*^{[id]}(\phi_H^1(L),L;U), \quad \mbox{
or }\, HF_*^{[id]}(H,(L,L);U).
$$
By definition, $HF^{[id]}(\phi_H^1(L),L;U)$ is always
well-defined \emph{without} any unobstructedness assumption of $L \subset M$
such as exactness or monotonicity of the pair $(L,M)$ or the unobstructedness in the sense of
\cite{fooo:book}.

Once the above thick-thin decomposition results of the Floer moduli
spaces for the boundary and for the chain maps are established,
essentially the same isolatedness argument as in \cite{oh:imrn}
gives rise to the following computation.

\begin{thm}\label{thm:local} Let $L\subset M$ be as above and  $U$
be a Darboux neighborhood of $L$ and $\CH: s \mapsto H(s)$ a family
of $U$-engulfable Hamiltonians with $H(0) = 0$. Then if $\max_{s \in
[0,1]}\overline d(\phi_{H(s)}^1,id) < \delta$ and $|J_t-J_0|_{C^1}<
\delta$ for some time independent $J_0$ and if $J$ is
$(L,\phi_H^1(L))$-regular, then
$$
HF_*(H,L;J;U) \cong H_*(L;\Z).
$$
\end{thm}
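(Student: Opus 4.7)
The plan is to reduce the computation to a Morse-theoretic model on $L$ via a continuation argument along the one-parameter family $\CH$, with Theorem \ref{thm:Sigma-d} providing the needed isolation of thin trajectories at every stage. First, I invoke Theorem \ref{thm:Sigma-d} to define $HF_*(H,L;J;U)$ by counting only thin solutions of \eqref{eq:CRdvJ0} (those whose image lies in $D_\delta(L) \subset V$). The dichotomy guarantees that such solutions form a moduli space isolated from the thick ones with area $\geq C(J_0,V)>0$, so the usual compactness/gluing package applies to give a well-defined boundary operator and hence a well-defined local complex.

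Next, I pass to the continuation equation associated to the family $\CH$: choose a smooth monotone cutoff $\rho:\R\to[0,1]$ with $\rho(-\infty)=0$, $\rho(+\infty)=1$, and consider
\[
\dudtau + J^{\CH}_\tau\left(\dudt - X_{H(\rho(\tau))}(u)\right) = 0,
\qquad u(\tau,0),\, u(\tau,1)\in L.
\]
Because $\CH$ is $U$-engulfable with $\max_s \overline d(\phi_{H(s)}^1,\mathrm{id})<\delta$ and because $|J_t-J_0|_{C^1}<\delta$ keeps the relevant almost complex structures inside $\CJ_\omega(V,J_g)$, the same $J_0$-convexity of $\partial V$ and monotonicity estimates used in the proof of Theorem \ref{thm:Sigma-d} apply verbatim to the parametrized equation. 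This yields the parametrized thick-thin dichotomy: every continuation solution is either contained in $D_\delta(L)$ or carries area $\geq C(J_0,V)$. Counting only the thin continuation trajectories defines a chain map $\Phi_\CH$ from the local complex at $s=0$ to the local complex at $s=1$, and reversing the homotopy gives $\Psi_\CH$.

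At the endpoint $s=0$, where $H(0)=0$ and $\phi_{H(0)}^1(L)=L$, transversality is restored by replacing $H(0)$ with a $C^2$-small Morse function $f$ on $L$, embedded as a Hamiltonian on $V$ through the Weinstein identification; this perturbation lies well inside the engulfable, $C^0$-small regime. Under this model all thin trajectories lie in $D_\delta(L)$ where $J_0 \equiv J_g$, so the classical identification with the Morse complex of $f$ (cf.\ \cite{oh:imrn}) produces
\[
HF_*^{[id]}(f,L;J_0;U) \;\cong\; HM_*(f) \;\cong\; H_*(L;\Z).
\]

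The final step is to show that $\Phi_\CH$ and $\Psi_\CH$ are mutually inverse quasi-isomorphisms. I expect this to be the main obstacle: the standard homotopy-of-homotopies produces $\Psi_\CH\circ\Phi_\CH - \mathrm{id} = \partial K + K\partial$ with $K$ a count of a one-parameter family of thin parametrized trajectories, but unlike the $C^2$-small setting of \cite{oh:imrn} one cannot separate generators by action filtration — the $C^0$-smallness hypothesis simply gives no uniform control on the action spectrum of Hamiltonian chords in $\mathcal{C}hord(H;L,L)$. The substitute is the geometric dichotomy of Theorem \ref{thm:Sigma-d}: all thin trajectories are confined to the $\delta$-tube $D_\delta(L)\subset T^*L$ where the $J_0$-holomorphic equation is governed by the Sasakian structure, so the handle sliding lemma of \cite{chekanov:newton} (in the form adapted in \cite{oh:imrn}) applies to identify $\Psi_\CH\circ\Phi_\CH$ with the identity up to $K$-boundary on the Morse complex. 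This yields the desired isomorphism $HF_*(H,L;J;U)\cong H_*(L;\Z)$.
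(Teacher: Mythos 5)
Your proposal tracks the paper's argument in all essential respects: isolate the thin moduli spaces via the geometric (maximum-principle) dichotomy of Theorem \ref{thm:Sigma-d} and its moving-boundary analogue Theorem \ref{thm:handle}, build continuation maps from the $U$-engulfable family $\CH$, identify the $s=0$ endpoint with a Morse model for $H_*(L)$, and conclude via a chain-homotopy argument whose moduli spaces are again confined by the maximum principle. The paper packages the isolation step in the Conley--Floer language of continuation of maximal invariant sets (its Theorem 5.2), while you proceed directly to the Piunikhin/chain-homotopy formulation; these are the same mechanism in different dress. Your endpoint model by a $C^2$-small Morse function on $L$ is the alternative explicitly sanctioned in Remark \ref{rem:nonzerof}.

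One citation should be corrected: in the last step you appeal to ``the handle sliding lemma of \cite{chekanov:newton}.'' Chekanov's version is driven by the \emph{area} filtration (short versus long trajectories), which the paper stresses is exactly what fails in the $C^0$-small regime --- thin trajectories here may have large area. The handle sliding lemma you actually need, and which you in fact describe, is this paper's Theorem \ref{thm:handle}, proved by applying the maximum principle on the $J_0$-convex domain $V$ rather than by area estimates. So the reasoning is sound; the attribution is not.
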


We would like to emphasize that the presence of the engulfable homotopy
$\CH$ is crucial in the statement of this theorem, because the commonly used
the linear homotopy $s \mapsto s\, H$ may not be
$U$-engulable and so may not induce a chain map between
the local Floer complex, even when $H_0,\, H_1$ are $U$-engulfable. In this regard,
statement and the proof of this theorem given in section \ref{sec:local-compu} may be the
most novel points of the mathematics of the present article.

To perform the above computation, we need to study the
behavior of the local Floer homology under the change of Hamiltonians. In this regard,
we consider a 1-parameter family of Hamiltonians (or a
2-parameter family of functions on $M$) $\CH =\{H(s)\}_{0 \leq s
\leq 1}$ with $H(0) \equiv 0$ and
\be\label{eq:para<d} \max_{s \in
[0,1]}\overline d(\phi_{H(s)}^1,id) < \delta
\ee
for a sufficiently
small $\delta = \delta_0(M,\omega;J_0)$. We fix an elongation function
$\rho:\R \to [0,1]$ satisfying
\bea\label{eq:rho}
\rho(\tau) & = & \begin{cases} 0 \quad & \tau \leq 0 \\
1 \quad & \tau \geq 1
\end{cases}\nonumber \\
\rho' & \geq & 0
\eea
and define its dual $\widetilde \rho : = 1-\rho$.
We will consider the lemma in the Lagrangian setting over the path
$s \mapsto H(s)$ for $\CH = \{H(s)\}_{s \in [0,1]} \subset \CH^{engulf}_\delta(M)$
with $H(0) \equiv 0$ for $\delta$ sufficiently small.
Again the smallness will depend only on $(M,\omega)$.

We consider the Cauchy-Riemann equation with moving boundary condition
\be\label{eq:CRdvJ0moving}
\begin{cases} \dvdtau + J_0 \dvdt = 0 \\
v(\tau,0) \in \phi_{H(\rho(\tau))}^1(L), \, v(\tau,1) \in L.
\end{cases}
\ee
Then we prove the following analog to Theorem \ref{thm:Sigma-d} for chain maps.
This is the analogue of the \emph{handle sliding lemma} from \cite{oh:ajm1,oh:minimax}
in which was studied the case with $C^2$-smallness of Hamiltonians replaced by the smallness in hamiltonian topology
(and also in the Lagrangian context).

\begin{thm}[Handle sliding lemma]\label{thm:handle}
Consider the path $\CH: s \mapsto H(s)$ of engulfable Hamiltonians
$H(s)$ satisfying \eqref{eq:para<d} and fix an elongation
function $\rho:\R \to [0,1]$. Then whenever
$\overline d(\phi_{H(s)}^1,id) \leq \delta < d(V,\Theta)$, any finite energy solution $v$
of \eqref{eq:CRdvJ0moving} satisfies one of the following alternatives:
\begin{enumerate}
\item Either
\be\label{eq:verythin2}
\Image v \subset D_\delta(L) \subset V,
\ee
\item or $\Image v \not \subset V$.
In this case, we also have
$\int v^*\omega \geq C(J_0,V)$ where $C(J_0,V) > 0$ is a constant
depending only on $\delta$ and $V$.
\end{enumerate}
\end{thm}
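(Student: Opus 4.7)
The plan is to follow the proof of Theorem \ref{thm:Sigma-d} almost verbatim; the only genuinely new feature is that the Lagrangian boundary condition at $t=0$ depends on $\tau$ through the elongation $\rho$. All the ingredients — $J_0$-convexity of $\overline V$, the plurisubharmonic defining function of $V$ inside the cotangent neighborhood, and the monotonicity formula for $J_0$-holomorphic curves — remain available because $J_0$ is fixed and time-independent, and \eqref{eq:CRdvJ0moving} is a genuine $\overline\partial_{J_0}$-equation with no Hamiltonian perturbation term and no $\rho'$-correction in the equation itself.

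For the first alternative, assume $\Image v \subset V$. On the Darboux neighborhood identified with a neighborhood of the zero section in $T^*L$, I take the $J_0$-plurisubharmonic function $\psi(q,p) = \tfrac{1}{2}|p|^2$, which vanishes exactly on $L$ and serves as a defining function near $\partial V$. Then $\psi\circ v$ is subharmonic on $\R\times[0,1]$. On $\{t=1\}$, $v(\tau,1)\in L$ forces $\psi\circ v\equiv 0$; on $\{t=0\}$, the uniform hypothesis $\overline d(\phi_{H(s)}^1,id)\leq \delta$ combined with $V$-engulfability gives $\phi_{H(\rho(\tau))}^1(L)\subset D_\delta(L)$ for every $\tau$, so $\psi(v(\tau,0))\leq \delta^2/2$. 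Finite energy forces $v$ to converge at $\tau=\pm\infty$ to intersection points lying in $L\cap L$ and in $\phi_H^1(L)\cap L \subset D_\delta(L)$ respectively, yielding the same bound at the asymptotic ends. The maximum principle then gives $\psi\circ v\leq \delta^2/2$ everywhere, which is precisely $\Image v\subset D_\delta(L)$.

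For the second alternative, suppose some $(\tau_0,t_0)$ satisfies $z_0:=v(\tau_0,t_0)\notin V$. Since $L\subset \Int V$ and all Lagrangian boundary conditions are trapped inside $D_\delta(L)$, the image $\Image v$ contains a point at uniform distance $r_0=r_0(V)>0$ from each of the three loci ($L$ at $t=1$, the moving family at $t=0$, and $\partial V$). The monotonicity formula for $J_0$-holomorphic maps — applied either in its interior version at such a point when available, or in its boundary version along the nearest Lagrangian leaf — yields a uniform area bound $\int v^*\omega\geq C(J_0,V)>0$ depending only on $\delta$, $V$, and $J_0$.

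The main delicate point is that the side $\{t=0\}$ is no longer a single Lagrangian but a one-parameter family; this is handled entirely through the uniform hypothesis \eqref{eq:para<d}, which guarantees both that $\psi$ is uniformly bounded by $\delta^2/2$ along the entire moving boundary and that the constant in the boundary monotonicity formula can be chosen uniformly in $\tau$, since the family $\{\phi_{H(s)}^1(L)\}_{s\in[0,1]}$ sits inside the single compact region $\overline{D_\delta(L)}\subset V$ and varies smoothly in the parameter $s$. No elongation of the equation in $\tau$ is needed, which is why the argument essentially reduces to the fixed-boundary case of Theorem \ref{thm:Sigma-d}.
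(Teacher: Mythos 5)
Your proposal is correct and follows essentially the same route the paper takes: the paper's own proof (given as Theorem~\ref{thm:handle2}) is literally ``the same as that of Theorem~\ref{thm:thick-thin2} and so omitted,'' and your write-up simply makes explicit what that reduction entails. You correctly identify the only new feature — the $\tau$-dependent Lagrangian boundary condition at $\{t=0\}$ — and observe that it is harmless because (i) the equation \eqref{eq:CRdvJ0moving} is a genuine $\delbar_{J_0}$-equation with no zeroth-order perturbation and no $\rho'$ term, so $J_0$-convexity and the maximum principle apply exactly as before, and (ii) the hypothesis \eqref{eq:para<d} confines the entire moving family $\{\phi_{H(\rho(\tau))}^1(L)\}$ inside $\overline{D_\delta(L)}$, giving the uniform boundary bound on $\psi\circ v$. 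Two small remarks: at $\tau\to\pm\infty$ the limiting configurations actually lie on $L$ itself (at $-\infty$ because $H(0)\equiv 0$, at $+\infty$ because $\phi_{H(1)}^1(L)\cap L\subset L$), so $\psi$ vanishes there and your bound at the ends is even better than $\delta^2/2$; and for alternative (2) the interior monotonicity formula alone suffices, since a point $v(z_0)\notin V$ is at distance at least $\dist(\partial V, D_\delta(L))>0$ from the image of $\partial(\R\times[0,1])$, so the ball of that radius about $v(z_0)$ meets no boundary of the strip — the ``boundary version along the nearest Lagrangian leaf'' you mention as a fallback is never needed, and indeed would be awkward to state uniformly for a moving family.
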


Similarly as before done for the boundary map,
we can transform  a solution $v$ of \eqref{eq:CRdvJ0} to that of
the perturbed Cauchy-Riemann equation with fixed boundary condition
\be\label{eq:CRKH}
\begin{cases} \frac{\del u}{\del \tau} - X_{K(\rho(\tau))}(u) +
J\Big(\frac{\del u}{\del t} - X_{H(\rho(\tau))}(u)\Big) = 0\\
\lim_{\tau \to -\infty}u(\tau) = z^-,  \lim_{\tau \to
\infty}u(\tau) = z^+.
\end{cases}
\ee
The Floer chain map $h_{H^\rho}: CF_*(H^0) \to CF_*(H^1)$ can be
defined by considering either the suitable moduli space of solutions \eqref{eq:CRKH}
or that of \eqref{eq:CRdvJ0moving}.

\subsection{Statement of main results}

Using the above constructed local Floer homology, we can assign the local spectral invariants
which we denote by $\rho^{lag}_U(H;a)$ for $a \in H^*(L;\Z)$. We will
restrict to the case $PD[M] = 1$. To highlight the localness of the invariant
we denote $\rho^{lag}_U(H;1_0)$ the corresponding invariant. Denote the
global spectral invariant associated to $1$ by $\rho^{lag}(H;1)$.

%

By specializing to the case of zero section $o_N$ of $T^*N$,
 we can define the local Floer complex
$$
(CF_*(F;U,T^*N),\del_U)
$$
for any $F \in \CH_\delta^{engulf}(T^*N)$ provided $\delta > 0$ is sufficiently small.

When Theorem \ref{thm:Sigma-d} and Theorem \ref{thm:handle} are
applied to the cotangent bundle $T^*L$, we obtain the following

\begin{cor}\label{cor:exact} Consider a pair of open neighborhoods $V \subset \overline V \subset U$ of
$o_L$ in $T^*L$ be given where $V$ is $J_0$-convex. Assume $\CH = \{H(s)\}$ is
an engulfable isotopy with $F = H(1)$ satisfying \eqref{eq:para<d}.
Fix an elongation function $\rho:\R \to [0,1]$ and consider the equation
\eqref{eq:CRdvJ0moving}.
Then there exists $\delta > 0$ with $\delta < d(V,\theta)$ such that whenever
$$
\max_{s \in [0,1]} \overline d(\phi_{H(s)}^1,id) < d(V,\Theta),
$$
the followings hold:
\begin{enumerate}
\item For $F=H(1)$,
any solution of $v$ of \eqref{eq:CRdvJ0} with $\Im v \subset U$ is  very thin.
\item
Fix an elongation function $\rho:\R \to [0,1]$ and consider the equation
\eqref{eq:CRdvJ0moving}.
Then any finite energy solution $v$ with $\Im v \subset U$
is thin (and so very thin).
\end{enumerate}
\end{cor}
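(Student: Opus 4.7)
The plan is to deduce Corollary \ref{cor:exact} from Theorems \ref{thm:Sigma-d} and \ref{thm:handle} by combining them with the exactness of the Liouville form $\Theta$ on $U \subset T^*L$ to rule out the thick alternative whenever $\Image v \subset U$ and $\delta$ is small enough. Those theorems give a dichotomy whose second branch carries a lower area bound $\int v^*\omega \geq C(J_0,V)$, while exactness together with the $C^0$-smallness of $\phi_{H(s)}^1$ should produce a competing upper area bound of order $\delta$; for $\delta$ small enough these are incompatible, forcing the very thin alternative.

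For part (1), suppose for contradiction that a solution $v$ of \eqref{eq:CRdvJ0} has $\Image v \subset U$ but is not very thin. Theorem \ref{thm:Sigma-d} then forces the thick alternative, in particular $\int v^*\omega \geq C(J_0,V)$. Since $\omega|_U = -d\Theta$, Stokes' theorem rewrites $\int v^*\omega$ as $-\int_{\partial v} v^*\Theta$, and the boundary splits into three pieces: the arc on $o_L$ where $\Theta \equiv 0$, contributing zero; the asymptotic chords at $\tau = \pm\infty$, which are constant paths at points of $\phi_F^1(o_L) \cap o_L$ and contribute zero; and the arc on $\phi_F^1(o_L)$. The last arc is handled using that $\phi_F^1(o_L)$ is an exact Lagrangian whose primitive $f_F$ of $\Theta|_{\phi_F^1(o_L)}$ has oscillation of order $\overline d(\phi_F^1,id) \leq \delta$ by the definition of $d(V,\Theta)$, yielding $|\int v^*\omega| \leq C'\delta$. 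Choosing $\delta$ so that $C'\delta < C(J_0,V)$ contradicts the thick alternative, so $v$ must be very thin.

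For part (2) the same scheme applies with Theorem \ref{thm:handle} replacing Theorem \ref{thm:Sigma-d}. The new complication is that the $t=0$ boundary now runs through the one-parameter family of Lagrangians $\{\phi_{H(\rho(\tau))}^1(o_L)\}_\tau$ rather than a single Lagrangian. I would parametrize $v(\tau,0) = \phi_{H(\rho(\tau))}^1(\gamma(\tau))$ for a path $\gamma:\R \to o_L$ and decompose $\Theta(\dot v(\cdot,0))$ into an intrinsic piece $df_{H(\rho(\tau))}(\dot\gamma)$, handled as in part (1) uniformly in $\tau$ since $\max_s \overline d(\phi_{H(s)}^1,id) < d(V,\Theta)$, and an extrinsic piece carrying $\rho'(\tau)$, bounded by the supremum of $|\Theta|$ on $V$ times the size of $\partial_s \phi_{H(s)}^1$ and integrated against the compactly supported $\rho'$. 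Summing these pieces again yields $\int v^*\omega = O(\delta)$, contradicting the thick alternative for small $\delta$ and forcing $v$ to be thin (hence very thin, by Theorem \ref{thm:handle}).

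The main technical obstacle lies in part (2): carrying out the moving-boundary Stokes computation uniformly in $s$ and ensuring that the $\rho'$-term is genuinely dominated by $\delta$, using only the engulfability of $\CH$ and the bound $\max_s \overline d(\phi_{H(s)}^1,id) < d(V,\Theta)$ rather than any $C^1$ or stronger norm of the family $\{H(s)\}$. Once this is done, the conclusion that every $v$ with $\Image v \subset U$ is very thin is an immediate consequence of the dichotomies of Theorems \ref{thm:Sigma-d} and \ref{thm:handle}.
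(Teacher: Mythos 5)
Your proposal tries to rule out the thick alternative of Theorems \ref{thm:Sigma-d} and \ref{thm:handle} by combining the lower bound $\int v^*\omega \geq C(J_0,V)$ with a competing Stokes-theoretic upper bound $\int v^*\omega = O(\delta)$ coming from exactness of $\Theta$. This runs directly counter to one of the paper's central points, and the key estimate does not hold. The paper stresses (see the discussion preceding Definition \ref{defn:hamtopology}, and again the remark immediately after Theorem \ref{thm:Sigma-d}) that in the $C^0$-small regime ``thin trajectories \ldots could have large areas'': the whole reason for abandoning the area-based decomposition of \cite{oh:imrn} in favor of a maximum-principle decomposition is precisely that the symplectic area of a Floer trajectory is \emph{not} controlled by $\overline d(\phi_H^1,id)$. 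Concretely, for part (1) your inequality $|\int v^*\omega| \leq C'\delta$ rests on the claim that the primitive $f_F$ of $\Theta|_{\phi_F^1(o_L)}$ has oscillation of order $\delta$. That is false: the oscillation of $f_F$ (equivalently of the generating function $h_F$) is governed by the Hofer norm $\|F\|$, not by the $C^0$-displacement $\overline d(\phi_F^1,id)$; the Lagrangian $\phi_F^1(o_L)$ need not be a graph over $o_L$, and the boundary arc $\tau \mapsto v(\tau,0)$ on $\phi_F^1(o_L)$ can have arbitrarily large length even while staying inside $D_\delta(o_L)$, so $\int p\,dq$ over that arc is not $O(\delta)$. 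For part (2) the situation is worse: the moving-boundary Stokes term proportional to $\rho'(\tau)$ carries the $s$-derivative of the family $\{H(s)\}$, which --- as the paper emphasizes in Remark \ref{rem:linearhomotopy} --- is completely uncontrolled in this setting, so no choice of $\delta$ depending only on $(V,J_0,L)$ can absorb it. Both branches of your argument therefore contain genuine gaps.

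The paper's intended argument is more elementary and avoids areas altogether. Since $J_0 \equiv J_g$ on $U$, the fiberwise norm function $|p|$ on $T^*L$ is plurisubharmonic there. For any $J_0$-holomorphic strip $v$ (for either \eqref{eq:CRdvJ0} or \eqref{eq:CRdvJ0moving}) with $\Im v \subset U$, its boundary lies on $o_L$, where $|p|=0$, and on the Lagrangians $\phi_{H(s)}^1(o_L) \subset D_\delta(o_L)$, where $|p| \leq \delta$; the maximum principle then forces $\Im v \subset D_\delta(o_L) \subset V$, i.e.\ $v$ is very thin. This is exactly the argument already given in the proofs of Theorem \ref{thm:thick-thin2} and Theorem \ref{thm:handle2}, now applied on $U$ rather than just $V$; it treats parts (1) and (2) uniformly, handles the moving boundary with no extra difficulty because the bound $\max_s \overline d(\phi_{H(s)}^1,id) \leq \delta$ is uniform in $s$, and makes no appeal to any area estimate.
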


When we specialize our construction, using Corollary \ref{cor:exact}, to a $J_0$-convex neighborhood of
the zero section in the cotangent bundle, we can define the local version $\rho^{lag}_V(F;1_0)$
of the Lagrangian spectral invariants. We refer to section \ref{sec:cotangent} for the detailed
construction of this local invariant.

Once we have achieved localizations of various entities arising in
Floer complex in the previous subsection, the following equality can be proven
by the same argument used in the proof of Theorem \ref{thm:localrho=globalrho}
using the localized version of Lagrangian spectral invariants and
basic phase function.

\begin{thm}\label{thm:rhoVF<rhoF} Fix an open neighborhood $V \subset T^*L$ of
$o_L \subset T^*L$ that is $J_0$-convex. Let $\CH=\{H(s)\}$ be an engulfable isotopy
with $H(0) = 0$ and $H(1) =F$. Then for any $F \in \CH^{engulf}_\delta(M;V)$,
$$
\rho^{lag}_V(F;1_0) = \rho^{lag}(F;1).
$$
\end{thm}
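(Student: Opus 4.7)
The plan is to mimic the proof of Theorem \ref{thm:localrho=globalrho} referenced by the author, using the basic phase function as the tool that expresses both the local and the global Lagrangian spectral invariants in terms of the same geometric quantity defined purely on the zero section.

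First, I would fix a $J_0$-convex pair $V \subset \overline V \subset U$ and choose $\delta < d(V,\Theta)$ small enough that Corollary \ref{cor:exact} applies along the entire engulfable homotopy $\CH$ from $0$ to $F$. This guarantees two things: every Hamiltonian chord of $F$ from $o_L$ to $o_L$ is contained in $V$, so the generators of the local complex $CF_*(F;U,T^*L)$ coincide with those of the global complex $CF_*(F)$; and every finite energy solution of \eqref{eq:CRdvJ0} or \eqref{eq:CRdvJ0moving} whose image lies in $U$ is automatically very thin. Consequently the local differential $\del_U$ and the local continuation map are canonical sub-pieces of the global differential and continuation map, and the comparison homomorphism $CF_*(F;U,T^*L) \to CF_*(F)$ preserves the action filtration.

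Second, I would set up the basic phase function attached to $\phi_F^1(o_L)$. Engulfability places $\phi_F^1(o_L)$ inside $V$, where $\omega = -d\Theta$, so $\phi_F^1(o_L)$ is an exact Lagrangian admitting a primitive $S_F$ which, pulled back to $o_L$, becomes a function whose critical points are in bijection with $\phi_F^1(o_L) \cap o_L$ and whose critical values equal the normalized actions of the corresponding Hamiltonian chords (with the normalization \eqref{eq:Dev}--\eqref{eq:underlineH}). By the same argument as in Theorem \ref{thm:localrho=globalrho}, one then identifies both $\rho^{lag}_V(F;1_0)$ and $\rho^{lag}(F;1)$ with the minimax value of $S_F$ over cycles representing the fundamental class $1_0 \in H^*(o_L) \cong H^*(L)$ — for the local invariant this uses Theorem \ref{thm:local} and the fact that the differential on thin trajectories reduces to the Morse differential of $S_F$, while for the global invariant it uses that any trajectory contributing to the thick part has area at least $C(J_0,V)$ and hence cannot be involved in realizing the minimax level for $1$ once $\delta$ is chosen small enough.

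Third, the bridge between these two identifications is made along the engulfable homotopy $\CH$. At $s=0$ both invariants vanish and equal $\min S_{H(0)} = 0$; Theorem \ref{thm:handle} shows that as $s$ varies, the continuation map splits into a thin piece governed entirely by $S_{H(s)}$ and a thick piece whose action jumps are at least $C(J_0,V)$. Combined with the continuity of $\rho^{lag}$ and $\rho^{lag}_V$ in the hamiltonian topology, this forces the equality $\rho^{lag}_V(H(s);1_0) = \rho^{lag}(H(s);1)$ to persist for every $s \in [0,1]$.

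The principal obstacle will be the last point: lower bounds on the \emph{area} of thick trajectories do not translate directly into lower bounds on their \emph{action} jumps, because primitives of $\Theta$ on $\phi_F^1(o_L)$ can absorb area through their boundary values. Controlling this is the reason the argument must be routed through the basic phase function rather than through a naive action-filtration comparison: the minimax identity $\rho^{lag}_V(F;1_0) = \min S_F = \rho^{lag}(F;1)$ bypasses direct bookkeeping between thin and thick contributions by performing the calculation on the finite-dimensional zero section. Verifying that the local and global minimax computations for the fundamental class both reduce to $\min S_F$ is the delicate step, and it is here that the earlier argument from Theorem \ref{thm:localrho=globalrho} must be transcribed essentially verbatim into the $T^*L$ setting.
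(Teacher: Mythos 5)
Your proposal takes a genuinely different, and more roundabout, route than the paper does, and it also contains a gap that the paper's own approach sidesteps entirely.

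The paper proves the equivalent Theorem~\ref{thm:localrho=globalrho} in a single paragraph by exploiting the special geometry of the cotangent bundle: with $J_0$ the Sasakian complex structure on $T^*L$ and the boundary Lagrangians $\phi_{H(s)}^1(o_L)$ and $o_L$ all contained in the $J_0$-convex set $V$, the maximum principle forces \emph{every} finite-energy Floer trajectory---for the differential of $CF_*(F)$ as well as for the continuation map attached to $\CH$---to have image inside $V$. In other words, in $T^*L$ there are simply no thick trajectories; the dichotomy of Theorem~\ref{thm:handle} degenerates to $\del = \del_{(0)}$ and $\Psi_\CH = \Psi_{\CH,(0)}$ on the nose. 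Hence $(CF_*^{[id]}(F),\del_{(0)})$ and $(CF_*(F),\del)$ are literally the same filtered complex, $(\Psi_{\CH,(0)})_*([L]) = (\Psi_\CH)_*([L])$, and the two spectral invariants coincide tautologically. Nothing about the basic phase function or any finite-dimensional minimax enters.

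Your route through a primitive $S_F$ has a concrete gap even on its own terms: you propose to pull back the primitive of $\Theta|_{\phi_F^1(o_L)}$ to a function on $o_L$ whose critical points give the intersection $\phi_F^1(o_L) \cap o_L$. This requires $\phi_F^1(o_L)$ to project diffeomorphically onto the zero section, i.e.\ to be of the form $\Graph\,dS_F$. But the hypothesis here is only that $\phi_F^1$ is $C^0$-close to the identity, and a $C^0$-close Lagrangian need not be a graph; that is precisely the regime distinguishing this paper from \cite{oh:imrn}. The basic phase function $f_F$ is defined Floer-theoretically and is in general only Lipschitz, not a smooth generating function in your sense. The difficulties you flag in your last two paragraphs---converting area lower bounds into action gaps, and controlling thick contributions to the minimax level---are symptoms of the missing observation: once one notices that in $T^*L$ the maximum principle eliminates the thick part altogether, no such bookkeeping is needed. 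Your statement that ``the differential on thin trajectories reduces to the Morse differential of $S_F$'' is likewise an adiabatic-limit assertion that the paper neither establishes nor requires for this theorem.
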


Denote by $f_F^V$ the basic phase function defined by
$$
f_F^V(q) = \rho_V^{lag}(F;\{q\})
$$
where $\rho_V^{lag}(F;\{q\})$ is the spectral invariant defined by considering the
local Floer complex $CF(L, T_q^*N;V)$ instead of the global complex $CF(L,T_q^*N)$ on
$T^*N$.

\begin{thm}\label{thm:localfH=globalfH} Let $V \subset T^*N$ be as before. Then
$$
f_F^V = f_F
$$
for any $V$-engulfable $F$.
\end{thm}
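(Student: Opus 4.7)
The plan is to mimic the proof of Theorem \ref{thm:rhoVF<rhoF} pointwise in $q \in N$, with the fundamental class $1_0$ replaced by the class $\{q\}$ represented by the (essentially unique) intersection point of $\phi_F^1(o_N)$ with $T_q^*N$ that lies over a small neighborhood of $q$. By definition, both $f_F^V(q)$ and $f_F(q)$ are spectral invariants attached to the pair of Lagrangians $(\phi_F^1(o_N), T_q^*N)$ in $T^*N$, the difference being that $f_F^V(q)$ is computed in the local Floer complex $CF(\phi_F^1(o_N), T_q^*N; V)$, counting only those strips whose image lies in $V$. So the task is to identify these two invariants.

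First I would observe that since $\phi_F$ is $V$-engulfable and $V$ is a neighborhood of $o_N$, all intersection points $\phi_F^1(o_N) \cap T_q^*N$ lie in $V$, so both the local and the global Floer complexes share the same generating set (and the same action values). The issue is entirely at the level of the boundary and continuation maps. Next, I would invoke Corollary \ref{cor:exact}(1): any finite-energy solution of \eqref{eq:CRdvJ0} with image in $U$ is automatically very thin, hence contained in $V$. To extend this control to \emph{all} Floer strips in $T^*N$ (not merely those with image in $U$), I would choose $J_0$ cylindrical with respect to a convex exhaustion of $T^*N$ outside a compact set containing $U$; the standard maximum principle for $J_0$-holomorphic strips with boundary on exact Lagrangians then prevents escape to infinity, and the $J_0$-convexity of $V$ together with Theorem \ref{thm:Sigma-d} produces a dichotomy $\partial = \partial_{thin} + \partial_{thick}$ where $\partial_{thick}$ has strictly positive minimal area $C(J_0,V) > 0$.

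I would then run the same homotopy argument as for Theorem \ref{thm:rhoVF<rhoF}. Using the engulfable interpolation $\CH: s \mapsto H(s)$ from $0$ to $F$, Theorem \ref{thm:handle} (handle sliding) gives the analogous thick-thin decomposition of the Floer chain map, and Theorem \ref{thm:local} identifies the local Floer homology with $H_*(pt;\Z)$, generated by the intersection point over $q$. Filtering by action and comparing the local boundary (which coincides with $\partial_{thin}$) to the global one shows that the cohomology class $\{q\}$ descends through the inclusion $CF^{\leq \lambda}(\cdot\,;V) \hookrightarrow CF^{\leq \lambda}(\cdot)$ in a filtration-preserving manner. Since both spectral invariants are characterized as the infimum of action levels $\lambda$ such that the class $\{q\}$ is represented below level $\lambda$, one gets $\rho_V^{lag}(F;\{q\}) = \rho^{lag}(F;\{q\})$ for every $q$, which is precisely $f_F^V = f_F$.

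The main obstacle I anticipate is not the algebraic bookkeeping but the non-compactness of $T_q^*N$ and of $T^*N$: unlike the compact ambient setting where the maximum principle on $V$ suffices, here one must simultaneously control Floer trajectories near infinity. This forces a careful choice of $J_0 \in \CJ_\omega(V,J_g)$ that is both $J_g$ on $V$ and of contact type on the complement of some compact set, so that the two maximum principles (one keeping strips inside a compact piece of $T^*N$, the other keeping thin strips inside $V$) can be applied in tandem. Once this joint $C^0$-control is secured, the remainder of the argument is a verbatim transcription of the proof of Theorem \ref{thm:rhoVF<rhoF} with the unit class replaced by the point class.
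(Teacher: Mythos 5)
Your proposal follows the same route the paper intends: the paper omits the proof of this theorem and explicitly says it is the same argument as Theorem \ref{thm:localrho=globalrho}, carried out pointwise in $q$ for the class $\{q\}$ in place of $1$; your outline (engulfable homotopy, handle sliding, thick-thin decomposition via the maximum principle, comparison of spectral numbers) is exactly that.

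One place where your reasoning should be tightened. You set up the dichotomy $\del = \del_{(0)} + \del'$ with $\del'$ of positive minimal area and then argue by a filtered inclusion $CF^{\leq\lambda}(\cdot;V)\hookrightarrow CF^{\leq\lambda}(\cdot)$. But that inclusion is a chain map only if $\del'$ vanishes on the local generators; and once one knows that, the local and global complexes — and the two chain maps $\Psi_\CH$ and $\Psi_{\CH,(0)}$ — literally coincide, so equality of mini-max levels is immediate rather than an output of a filtration comparison (the filtered inclusion alone gives only one inequality). What actually forces $\del'=0$ and $\Psi_\CH'=0$ is the maximum principle, and the specific subtlety here — beyond Theorem \ref{thm:rhoVF<rhoF}, where both boundary Lagrangians are compact and contained in $V$ — is that the second boundary Lagrangian $T_q^*N$ is non-compact and exits $V$. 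One must apply the strong (Hopf) maximum principle along $T_q^*N$, using that $T_q^*N\cap S^\delta(T^*N)$ is Legendrian, which is exactly the point the paper spells out in section \ref{subsec:triangle} for the triangle product. Your phrase about a contact-type $J_0$ and ``two maximum principles in tandem'' is gesturing at this, but the strong boundary maximum principle along $T_q^*N$ is the specific tool you need; with it every finite-energy strip for the boundary, continuation and PSS moduli spaces lies in $V$, and the rest of your argument runs verbatim as in the paper's proof of Theorem \ref{thm:localrho=globalrho}.
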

One important consequence of the above theorems is the inequalities
\be\label{eq:lagV<E-F}
\rho^{lag}_V(F;1_0), \quad \max f_H^V \leq E^-(F).
\ee
\begin{rem} We recall from \cite{oh:jdg,oh:alan} that the proof of the inequality
$\rho^{lag}(F;a) \leq E^-(F)$ therein is based on the computation of the action
changes under the linear homotopy $s \mapsto s\, F$. This cannot be directly applied to
the study of the local version of spectral invariants because the linear homotopy
may not be $V$-engulfable as mentioned before.
\end{rem}

In Appendix, we also localize the Hamiltonian Floer complex for a future purpose.
We apply the above constructions to the graph
$$
\Graph \phi_F^1 = \{(\phi_H^1(x),x) \mid x \in M\} \subset M \times M
$$
of engulfable Hamiltonian $F$ on $M$ satisfying
\be\label{eq:condition-F}
\overline d(\phi_F^1,id) < \delta
\ee
for a sufficiently small $\delta > 0$. We define
$$
\CH_{\delta}^{engulf}(M) \subset C^\infty([0,1] \times M,\R)
$$
to be the set of such Hamiltonian $F$'s, and
call the associated Hamiltonian path an engulfable Hamiltonian \emph{$C^0$-approximate loop}.

Consider $\CU \subset \CL_0(M)$ defined by
$$
\CU = \CU(U_\Delta) =\{ \gamma \in \CL_0(M) \mid (\gamma(t),\gamma(0)) \in U_\Delta\}.
$$
We define the local Floer homology
$$
HF_*^{[id]}(F,J;\CU), \quad \CU \subset \CL_0(M)
$$
by counting the `thin' trajectories such that
their images are contained in a neighborhood
$\CU$ of the set of constant paths in $M$. Again we would like to
emphasize that we have not control of the area or filtrations.

We denote by $\rho^{ham}_\CU(\phi_F;1) = \rho^{ham}_\CU(\underline F;1)$ the (local) spectral
invariant associated to $1 \in H^*(L)$.

From now on, we will always assume that all the Hamiltonians in the rest of the paper are engulfable
one way or the other, unless otherwise said explicitly.

We would like to thank D. McDuff and H. Hofer for pointing out a
crucial gap in our $W^{1,p}$-precompactness proof in the previous version
of the present paper. This forces us to abandon the area argument to obtain the thick-thin
decomposition of Floer moduli spaces but exploit the maximum principle instead to obtain
a decomposition result that will do our purpose of extracting the local Floer
complex our of the global Floer complex. We also thank an anonymous referee for
many suggestions to improve presentation of the paper.

\section{Local Floer chain module of engulfable Hamiltonian path $\phi_H$}
\label{sec:local}

From now on, we will fix a pair of
Darboux neighborhood $V \subset \overline V \subset U$ of $L$ in $M$ and
assume $H$ is $V$-engulfable, i.e., satisfies
\be\label{eq:isolated2}
\phi_{H^t}(L) \subset V \subset \overline V \subset U
\ee
for all $t \in [0,1]$.

Next we recall the Lagrangian analogue of
the Novikov ring $\Gamma_\omega = \Gamma(M,\omega)$ from \cite{fooo:book}.
Denote by $I_\omega:\pi_2(M,L) \to \R$
the evaluations of symplectic area. We also define another integer-valued
homomorphism $I_\mu:\pi_2(M,L) \to \Z$ by
$$
I_\mu(\beta) = \mu\left(w^*TM,(\del w)^*TL\right)
$$
which is the Maslov index of the bundle pair $(w^*TM,(\del w)^*TL)$
for a (and so any) representative $w:(D^2,\del D^2) \to (M,L)$ of $\beta$.

\begin{defn}\label{defn:GLomega}
We define
$$
\Gamma_{(\omega,L)} = \frac{\pi_2(M,L)}{\ker I_\omega \cap \ker I_\mu}.
$$
and $\Lambda(\omega,L)$ to be the associated Novikov ring.
\end{defn}

We briefly recall the basic properties on the
Novikov ring $\Lambda_{(\omega,L)}(R)$ where $R$ is a commutative ring
where $R$ could be $\Z_2,\, \Z$ or $\Q$ for example. We will just use the letter $R$
for the coefficient ring which we do not specify. Basically $R$ will be $\Q$
when the associated moduli space is orientable as in the case of $\Graph \phi_H^1$
for a Hamiltonian diffeomorphism $\phi_H^1$ which is of our main interest.

We put
$$
q^\beta = T^{\omega(\beta)} e^{\mu_L(\beta)},
$$
and
$$
\operatorname{deg}(q^\beta) = \mu_L(\beta), \quad E(q^\beta) = \omega(\beta)
$$
which makes $\Lambda_{(\omega,L)}$ and $\Lambda_{0,(\omega,L)}$ become a graded
ring in general. We have the canonical valuation
$\nu: \Lambda_{(\omega,L)} \to \R$ defined by
$$
\nu\left(\sum_\beta a_\beta T^{\omega(\beta)}e^{\mu_L(\beta)}\right) =\min\{ \omega(\beta) \mid a_\beta \neq 0 \}
$$
It induces a valuation on $\Lambda_{(\omega,L)}$
which induces a natural filtration on it. This makes $\Lambda_{(\omega,L)}$ a filtered graded ring.
For a general Lagrangian submanifold, this ring may not even be Noetherian but it is so if
$L$ is rational, i.e., $\Gamma(L;\omega)$ is discrete.

Now consider a nondegenerate $V$-engulfable Hamiltonian $H$ where $V$ is
a given Darboux neighborhood of $L$. We denote by $\Omega(L,L)$ the set of
paths $\gamma:[0,1]$ with $\gamma(0), \, \gamma(1) \in L$. In general $\Omega(L,L)$
is not connected but it has the distinguished component of constant paths, which
we denote by
$$
\Omega_0(L,L).
$$
When $H$ is $V$-engulfable, the path space $\Omega(\phi_H^1(L),L)$ also carries the
distinguished component of the path $t \mapsto \phi_H^t(p)$ for $p \in L$. We denote by
$$
\Omega_0(\phi_H^1(L),L)
$$
the corresponding component. Then we denote by $\widetilde \Omega_0(L,L)$ the
Novikov covering space and $\pi: \widetilde \Omega_0(L,L) \to \Omega_0(L,L)$ the
projection. We denote by $[z,w]$ an element of $\widetilde \Omega_0(L,L)$.

Following \cite{chekanov:area} we say that two elements of $\Crit \CA_H$ are equivalent
if they belong to the same connected component of the set
$$
\pi^{-1}\left(\{\gamma \in \Omega_0(L,L) \mid \gamma([0,1]) \subset U\}\right) \subset \widetilde \Omega_0(L,L).
$$
Then the projection $\pi: \widetilde \Omega_0(L,L) \to \Omega_0(L,L)$ bijectively maps each equivalence class of
$\Crit \CA_H$ to $\CC hord(H;L,L)$. In the current case of $V$-engulfable Hamiltonian,
 there is a `canonical equivalence class' represented by the pairs $[z,w_z]$
for each given chord $z \in \CC hord(H;L,L)$, where $w_z$ is the (homotopically) unique
cone-contraction of $z$ to a point in $L$.
We denote this equivalence class by $\Crit^{[id]}\CA_H \subset \Crit \CA_H$. This
provides a canonical section of $\pi: \widetilde \Omega_0(L,L) \to \Omega_0(L,L)$
when restricted to $\CC hord(H;L,L) \subset \Omega_0(L,L)$.
This in turn induces a natural
$\Gamma_{(\omega,L)}$-action on $\Crit \CA_H$ which gives rise to the bijection
$$
\Crit^{[id]} \CA_H \times \Gamma_{(\omega,L)} \to \Crit \CA_H.
$$
\begin{rem}\label{rem:cut-offing}
Note that for any $[z,w] \in \Crit^{[id]} \CA_H$, \eqref{eq:isolated2} implies
$z(t) \in V$ since $z(0) \in \phi_H^1(L)$. Therefore the action value
$\CA_H([z,w])$ will not change even if we cut-off $H$ outside $V$.
\end{rem}

We denote
$$
\Crit^{[g]} \CA_H = g \cdot \Crit^{[id]} \CA_H, \quad g \in \Gamma_{(\omega,L)}.
$$
Then we denote their associated $R$-module by
$$
CF_*^{[g]}((L,L),H;U), \quad CF_*^{[id]}((L,L),H;U) = CF_*^{[id]}((L,L),H;U).
$$
We want to remark that $CF_*^{[id]}((L,L),H;U)$ coincides with the local Floer complex that was used
by the author in \cite{oh:imrn} for the case of $C^2$-small cases.

The above discussion in turn gives rise to the isomorphism
$$
CF^{[g]}((L,L),H;U) \otimes_R \Lambda_{(\omega,L)}
\cong CF_*((L,L);H)
$$
as a $\Lambda_{(\omega,L)}$-module for each $g \in \Lambda_{(\omega,L)}$.
Following \cite{chekanov:newton,chekanov:area}, we denote
\be\label{eq:lengu}
\leng(u) : = E_J(u) = E_{J_0}(v) = \area(v).
\ee
Now we note that the Floer (pre)-boundary map
$$
\del:CF_*((L,L);H) \to CF_*((L,L);H)
$$
is $\Lambda_{(\omega,L)}$-equivariant and has the decomposition
\be\label{eq:dellambda}
\del = \sum_{\lambda \in \R_{\geq 0}} \del_\lambda
\ee
where $\del_\lambda$ is the contribution arising from $u \in \CM(L,L;H)$ with
$$
\leng(u) = \lambda > 0.
$$

\section{Thick-thin dichotomy of Floer trajectories}
\label{sec:thick-thin}

This section is a modification of section 3 of \cite{oh:imrn} which
treats the case of $C^2$-small perturbation of Hamiltonians $H$.
In this section, we will replace the condition of $\phi_H$ being
$C^1$-small by that of $\phi_H$ being $C^0$-small.

Consider a sequence $v: \R \times [0,1] \to M$ of solutions of
\eqref{eq:CRdvJ0} associated to $H$ and $J_0$. We re-state Theorem \ref{thm:Sigma-d}
here.

\begin{thm}\label{thm:thick-thin2}
Let $L \subset (M,\omega)$ be a compact
Lagrangian submanifold and let $V \subset \overline V \subset U$ be
a pair of Darboux neighborhoods of $L$. Consider a $V$-engulfable
Hamiltonian path $\phi_H$.
Then there exists $\delta > 0$ depending only on
$\e$ (and $(M,\omega)$) such that whenever
$\overline d(\phi_H^1,id) \leq \delta$,
any solution of $v$ of \eqref{eq:CRdvJ0} satisfies one of the following alternatives:
\begin{enumerate}
\item $\Image v \subset V$ and $\max d(v(z), o_L) \leq d_{\text{\rm H}}(\phi_H^1(L),L)$,
\item $\Image v \not \subset V$ and
$\int v^*\omega \geq C(J_0,V)$ where $C(J_0, V) > 0$ is a constant
depending only on $V$.
\end{enumerate}
\end{thm}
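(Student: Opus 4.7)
My plan is to localize the analysis to the Darboux neighborhood and exploit the $J_0$-convexity of $V$ via a plurisubharmonic function that measures distance from $L$. Concretely, in the identification $U \cong$ (neighborhood of the zero section in $T^*L$), define $\psi(q,p) := \tfrac{1}{2}|p|_g^2$. Since $J_0 = J_g$ on $V$, the function $\psi$ is strictly $J_0$-plurisubharmonic, vanishes exactly on $L$, and its sublevel sets are $J_0$-convex. After possibly shrinking $V$, I would arrange $V = \{\psi < c_V\}$ and fix a smaller shell $V' := \{\psi < c_V/4\} \subset V$. I would then choose $\delta > 0$ small enough that $\overline d(\phi_H^1, id) \leq \delta$ forces $\phi_H^1(L) \subset V'$; this is legitimate since $L \subset V'$ and $\phi_H^1(L)$ lies uniformly close to $L$ for small $\delta$.

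\textbf{Case (1): $\Image v \subset V$.} Because $v$ satisfies \eqref{eq:CRdvJ0} with $J_0$ compatible and $\psi$ is $J_0$-plurisubharmonic, the composition $\psi \circ v$ is subharmonic on the strip $\R \times [0,1]$. On $t=1$ one has $v \in L$, hence $\psi \circ v \equiv 0$; on $t=0$ one has $v \in \phi_H^1(L)$, hence $\psi \circ v \leq \max_{\phi_H^1(L)} \psi$. Finite energy forces $v(\tau, \cdot)$ to converge as $|\tau| \to \infty$ to intersection points in $\phi_H^1(L) \cap L \subset L$, where $\psi$ vanishes. The maximum principle for subharmonic functions on the strip then yields $\psi \circ v \leq \max_{\phi_H^1(L)} \psi$ throughout. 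Since $\psi$ is essentially the square of the fiber-distance to $L$, this converts to $d(v(z), o_L) \leq d_{\text{\rm H}}(\phi_H^1(L), L)$ after matching the Sasakian fiber metric to the ambient one.

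\textbf{Case (2): $\Image v \not\subset V$.} Pick $z_0$ with $v(z_0) \notin V$. Since the boundary arcs and the limit paths at $\tau = \pm\infty$ all lie in $V'$, the image of $v$ must cross the annular shell $V \setminus \overline{V'}$ from within. Let $\Omega$ be a connected component of $\{z : v(z) \in V \setminus \overline{V'}\}$ whose closure meets both level sets $\{\psi = c_V/4\}$ and $\{\psi = c_V\}$. Then $v|_\Omega$ is a $J_0$-holomorphic map whose image is disjoint from the Lagrangian boundaries (which lie in $V'$), so $\partial \Omega$ maps entirely into the $J_0$-convex hypersurfaces $\partial V$ and $\partial V'$. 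The standard monotonicity formula for interior $J_0$-holomorphic curves passing through a shell of fixed width delivers a uniform lower bound $\int_\Omega v^*\omega \geq C(J_0, V) > 0$ depending only on the shell geometry.

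I expect the principal technical difficulty to lie in Case (2): ensuring that the boundary of the restricted component $\Omega$ truly avoids the two Lagrangians — so that a purely interior monotonicity argument (rather than its more delicate totally-real-boundary version) suffices — is precisely what forces the careful choice of $\delta$ that makes $\phi_H^1(L) \subset V'$ with a definite gap from $\partial V$. A secondary and more routine point is the rigorous maximum principle on the noncompact strip in Case (1): one uses finite $J_0$-energy together with the standard asymptotic convergence of $J_0$-holomorphic strips with Lagrangian boundary to supply limits of $v$ at $\tau = \pm\infty$ on which $\psi \circ v = 0$, after which subharmonicity of $\psi \circ v$ closes the argument via the maximum principle on an essentially bounded strip.
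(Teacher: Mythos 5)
Your proposal is correct and follows essentially the same route as the paper: the thin case is handled by the maximum principle for the ($J_0=J_g$)-plurisubharmonic fiber-norm function on the strip with Lagrangian boundary in $V$, and the thick case by the monotonicity formula applied to the part of the curve away from the two Lagrangians, which are forced well inside $V$ by the smallness of $\overline d(\phi_H^1,id)$. The only cosmetic difference is that you run monotonicity through a shell component $\Omega\subset v^{-1}(V\setminus \overline{V'})$, whereas the paper applies it in a ball about a point $v(z)\notin V$ whose radius is bounded below by the distance from $v(z)$ to $v(\del(\R\times[0,1]))$; both are standard and yield the same constant $C(J_0,V)$ depending only on the shell geometry.
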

\begin{proof} Suppose that $\Image v \not \subset V$.
Then there exists a point $v(z) \not\in V$ and so
$$
d(v(z), v(\del(\R \times [0,1])) \geq \min\{d_{\text{\rm H}}(\del V,\phi^1(L)),
d_{\text{\rm H}}(\del V,o_L)\}.
$$
Then monotonicity formula implies
$$
\int v^*\omega \geq C' \cdot \left(\min\{d_{\text{\rm H}}(\del V,\phi^1(L)),
d_{\text{\rm H}}(\del V,o_L)\}\right)^2
$$
where $C'$ is the monotonicity constant of $(M,\omega,J_0)$ in the monotonicity formula.
Considering $\delta < \frac{1}{4}\cdot d_{\text{\rm H}}(\del V,o_L)$ and setting
$$
C(J_0,V) : = \frac{1}{2}C' \cdot (d_{\text{\rm H}}(\del V,o_L) - \delta)^2
\geq \frac{1}{4}C' d_{\text{\rm H}}(\del V,o_L)^2
$$
(2) follows.

For the curve $v$ of the type (1), the maximum principle applied to
$J_0$-holomorphic curves contained in the Darboux neighborhood, we
obtain the maximum distance of $v(z)$ from $L$ is achieved on
the boundary $\R \times \{0,1\}$. But by the boundary condition,
we have
$$
\max_{z \in \R \times \{0,1\}} d(v(z),o_L) \leq d_{\text{\rm H}}(\phi_H^1(L),L)).
$$
This finishes the proof.
\end{proof}

\begin{rem} We would like to note that the area property for the trajectories $v$
of the type (2) spelled out as
$$
\int v^*\omega \geq C(J_0,V)
$$
with constant $C(J_0, V) > 0$ independent of $H$ will not be used in this paper.
\end{rem}

We now decompose $\del$ into
\be\label{eq:del}
\del = \del_{(0)} + \del'
\ee
where $\del_{(0)}$ is the sum of contribution of thin trajectories and $\del'$
that of thick trajectories.

\begin{rem}\label{rem:two-decompositions}
We would like to emphasize that \emph{even when $H$ is
$C^1$-small} this decomposition does not respect the one given in \eqref{eq:dellambda}.
This is a contrast from the case of $C^2$-small $H$: in that case it was proven in \cite{oh:imrn}
that there is a constant $\delta_0(H)$ satisfying $\delta(H) \to 0$ as $\|H\|_{C^2} \to 0$ such that
all thin trajectories have area less than $\delta_0(H)$ and that all thick trajectories have area
greater than $\frac{1}{2} A(M,L,J_0)$ and hence
$$
\del_{(0)} = \sum_{|\lambda| < \delta_0(H)} \del_{\lambda}, \quad
\del' = \sum_{\lambda > \frac{1}{2}  A(M,L,J_0)} \del_\lambda.
$$
\end{rem}

We denote $u \in \supp \del, \, \supp \del_{(0)}$, and $\supp \del'$ respectively,
if the map $u$ nontrivially contributes to the corresponding operators.

\begin{defn} We call $(CF_*^{[id]}((L,L),H;U), \del_U)$ the \emph{local Floer complex}
of $H$ in $U$ which is defined to be
\beastar
CF_*^{[id]} ((L,L),H;U) & = & R \cdot\{\Crit^{[id]} \CA_H\}, \\
\del_U & = & \del_{(0)}\Big|_{CF_*^{[id]} ((L,L),H;U)}.
\eeastar
\end{defn}
The $\Lambda_{(\omega,L)}$-equivariance of $\del$ gives rise to
$$
\widehat g \circ \del_{(0)}|_{CF_*^{[id]} ((L,L),H;U)} = \del_{(0)}|_{CF_*^{[g]} ((L,L),H;U)} \circ \widehat g
$$
and $\widehat g$ carries a natural weight given by
$$
\CA_F(g \cdot[z,w]) - \CA_F([z,w]), \, [z,w] \in \Crit \CA_F
$$
which does not depend on the choice of $ [z,w] \in \Crit \CA_F$.
In fact this real weight is nothing but the value $\omega([g])$.

\begin{prop}\label{prop:HF=H} Let $\delta > 0$ where $\delta$ is
the constant given in Theorem \ref{thm:thick-thin2}. Then $\del_U^2 = 0$ and so the local Floer homology
$$
HF_*^{[id]}((L,L),H;U) = \ker \del_U/ \operatorname{im} \del_U
$$
is well-defined.
\end{prop}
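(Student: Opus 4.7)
The plan is to adapt the standard Gromov--Floer argument for $\del^2=0$ to the moduli of thin trajectories inside $V$. Two ingredients are needed: (a) $\del_U$ sends $CF_*^{[id]}((L,L),H;U)$ to itself, and (b) in the $1$-dimensional case, the Gromov--Floer compactification consists purely of broken thin trajectories whose intermediate generators lie in $\Crit^{[id]}\CA_H$, with no sphere or disc bubbles and no thick components.

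For (a), a thin trajectory $u$ between canonical lifts $y_\pm=[z_\pm,w_{z_\pm}]\in\Crit^{[id]}\CA_H$ has image in $D_\delta(L)\subset V\subset T^*L$, so the glued disc $w_{z_-}\# u\#(-w_{z_+})$ is contained in $V$ with boundary on $L=o_L$. Since $\omega|_V=-d\Theta$ is exact and $\Theta|_L=0$, this disc has zero symplectic area and zero Maslov index, hence represents the trivial element of $\Gamma_{(\omega,L)}$, so $\del_U$ preserves the class $[id]$.

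For (b), I would take a sequence $u_n$ of thin trajectories from $y_-$ to $y_+$ with $y_\pm\in\Crit^{[id]}\CA_H$ in the $1$-dimensional component. Energy is fixed, equal to $\CA_H(y_-)-\CA_H(y_+)$, and all images lie in $\overline{D_\delta(L)}\subset V$; Gromov--Floer compactness then yields a stable limit consisting of broken trajectories and possible bubbles, all with image in $\overline{D_\delta(L)}\subset V$. Each trajectory piece in the limit has image in $V$ and is therefore very thin by Theorem \ref{thm:thick-thin2}, with endpoints in $\Crit^{[id]}\CA_H$ by (a). Sphere bubbles have zero area because $\omega|_V$ is exact, so they collapse; disc bubbles with boundary on $L$ have zero area because $\Theta|_L=0$; disc bubbles with boundary on $\phi_H^1(L)$ are excluded once one knows $\phi_H^1(L)$ is exact Lagrangian in $V$, which I would derive from $V$-engulfability through the flux identity $\tfrac{d}{dt}[\Theta|_{\phi_H^t(L)}]=0$ in $H^1(\phi_H^t(L);\R)$ (the isotopy is Hamiltonian, hence the Liouville class is preserved, and equals $[\Theta|_L]=0$ at $t=0$). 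Consequently, the boundary of the compactified $1$-dimensional moduli is $\bigsqcup_{y\in\Crit^{[id]}\CA_H}\CM(y_-,y)\times\CM(y,y_+)$, whose algebraic count vanishes.

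The main obstacle is the absence of a uniform area bound on thin trajectories in the $C^0$-setting, noted in the remark after Theorem \ref{thm:thick-thin2}. This rules out the strategy of \cite{oh:imrn} of separating thin and thick by an area gap; disc bubbles must instead be excluded via exactness of both $L$ and $\phi_H^1(L)$ inside $V$, and engulfability is what forces $\phi_H^1(L)$ to be exact in the Darboux chart. Once this exactness is secured, the remainder is the standard boundary-counting argument giving $\del_U^2=0$ and hence the well-definedness of $HF_*^{[id]}((L,L),H;U)$.
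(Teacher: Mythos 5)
Your proposal is correct and follows essentially the same route as the paper: localize the standard Gromov--Floer compactness and gluing argument to the thin part of the moduli space, using that concatenations of thin trajectories stay thin and that no bubbling can occur inside $V$. The paper's own proof is terser and simply asserts the absence of bubbles (with a remark that ``a bubble must go out of the given Darboux neighborhood''); you supply the justification that the paper leaves implicit, namely exactness of $\omega|_V$ together with exactness of both $L=o_L$ and $\phi_H^1(L)$ inside $V$, the latter via the flux identity for $V$-engulfable Hamiltonian isotopies.
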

\begin{proof}
Since all the thin trajectories have their image contained in
the Darboux neighborhood $U$, concatenations of thin trajectories also thin and
the thin part of Floer moduli spaces for the pair $(\phi_H^1(L),L)$
does not bubble-off. Then the standard compactness and gluing argument immediately
finishes the proof.
\end{proof}

In the next section we will compute the group $HF_*^{[id]}((L,L),F;U)$, when
$F = H(1)$ for a 2-parameter family $\CH = \{H(s)\}_{s \in [0,1]}$ with $H(0) = 0$
and $H(s) \in \CH^{engulf}_\delta(M)$.
We denote by
$$
\overline d(\phi_\CH^1,id): = \max_{s \in [0,1]} \overline d(\phi_{H(s)}^1,id)
$$
the $C^0$-distance of $\CH$ to the constant family $id$.

\section{Handle sliding lemma for engulfable isotopy of $C^0$-approximate loops}
\label{sec:handle}

In this section, we examine another important element in the chain
level theory, the {\it handle sliding lemma} introduced in
\cite{oh:ajm1} for the Hamiltonian $H$ that is sufficiently $C^2$-small.
We will consider the lemma in the Lagrangian setting over the path
$s \mapsto H(s)$ for $\CH = \{H(s)\}_{s \in [0,1]} \subset \CH^{engulf}_\delta(M)$
with $H(0) \equiv 0$ for $\delta$ sufficiently small.
Again the smallness will depend only on $(M,\omega)$.

For a family $\CH = \{H(s)\}_{s \in [0,1]}$, we also study the comparison
of this equation with the moving boundary condition.
For such a family, we consider the geometric version first
\be\label{eq:dvtildeJ}
\begin{cases}\dvdtau + J_0 \dvdt = 0 \\
v(\tau ,0) \in \phi_{H(\rho(\tau))}^1(L), \,
v(\tau ,1) \in L
\end{cases}
\ee
for the path $v:\R \times [0,1] \to M$. If we define a map
$u: \R \times [0,1] \to M$
$$
u(\tau,t) = \phi_{H(\rho(\tau))}^t(\phi_{H(\rho(\tau))}^1)^{-1}(v(\tau,t)),
$$
A simple calculation proves that $u$ satisfies $u(\tau,0), \, u(\tau,1) \in L$ and
\be\label{eq:CRKJH}
\begin{cases}
\dudtau - X_{K(\rho(\tau))}(u) + J\left(\dudt - X_{H(\rho(\tau))}(u)\right) = 0\\
u(\tau,0), \, u(\tau,1) \in L
\end{cases}
\ee
where $K$ is the $s$-Hamiltonian generating the Hamiltonian vector field
$$
X_K(s,t,x) : = \frac{\del \phi}{\del s}(\phi^{-1}(s,t,x))
$$
of the 2-parameter family
$(s,t) \mapsto \phi(s,t) = \phi_{H(s)}^t \phi_{H(s)}^{-1}$ and $J = J(s,t) = (\phi(s,t))_*J_0$.
We would like to highlight the presence of the terms $X_{K(\rho(\tau))}(u)$ in
the above equation for $u$ and the definition of energy of $u$.
The associated \emph{off-shell} energy of \eqref{eq:CRKJH} is given by
\be\label{eq:intv*omega}
E_{(H,K),J;\rho}(u) =\frac{1}{2} \int_{-\infty}^\infty\int_0^1
\left|\dudtau - X_{K(\rho(\tau))}(u)\right|_J + \left|\dudt - X_{H(\rho(\tau))}(u)\right|_J^2 \, dt\, d\tau.
\ee
which coincides with
$$
\int_{-\infty}^\infty\int_0^1 \left|\dudt - X_{H(\rho(\tau))}(u)\right|_J^2 \, dt\, d\tau
$$
\emph{on shell}. The proof of the on-shell identities
$$
\int v^*\omega  = E_{J_0}(v) = E_{(H,K),J;\rho}(u)
$$
is straightforward and so omitted.
With these correspondences, we have the obvious analog to Lemma \ref{lem:equiv}
for the moving boundary condition, whose precise statement we omit.

Here we re-state Theorem \ref{thm:handle} and give its proof here.

\begin{thm}[Handle sliding lemma]\label{thm:handle2}
Consider the path $\CH: s \mapsto H(s)$ of engulfable Hamiltonians
$H(s)$ satisfying \eqref{eq:para<d} and fix an elongation
function $\rho:\R \to [0,1]$.  Then there exists $\delta > 0$ such that if
$\overline d(\phi_{H(s)}^1,id) < \delta < d(V,\Theta)$, any finite energy solution $v$
of \eqref{eq:CRdvJ0moving} satisfies one of the following alternatives:
\begin{enumerate}
\item if $\Image v \subset V$, $\max_z d(v(z),o_L) \leq \overline d(\phi_{H(s)}^1,id)$,
\item if $\Image v \not \subset V$,
$\int v^*\omega \geq C(J_0,V)$ where $C(J_0,V) > 0$ is a constant
depending only on $J_0$ and $V$.
\end{enumerate}

\end{thm}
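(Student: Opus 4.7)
The plan is to follow the proof of Theorem \ref{thm:thick-thin2} \emph{mutatis mutandis}, replacing the fixed boundary $\phi_H^1(L)$ at $t=0$ by the $\tau$-moving boundary $\phi_{H(\rho(\tau))}^1(L)$. The crucial observation is that \eqref{eq:CRdvJ0moving} is a \emph{genuine} $J_0$-holomorphic equation in the interior, with no inhomogeneous Hamiltonian term, so that both the monotonicity formula and the maximum principle for $J_0$-holomorphic curves apply directly to $v$; only the boundary estimates need modification to accommodate the $\tau$-dependence of the Lagrangian.

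For alternative (2), suppose $\Image v \not\subset V$ and pick $z_0$ with $v(z_0) \not\in V$. Engulfability of the whole family $\CH$ guarantees $\phi_{H(\rho(\tau))}^1(L) \subset V$ for every $\tau$, so $v(\partial(\R \times [0,1])) \subset V$ uniformly. Choosing $\delta < \tfrac{1}{4}\dist(\partial V, o_L)$ and using the hypothesis $\max_{s\in[0,1]} \overline d(\phi_{H(s)}^1, id) < \delta$ produces a uniform positive separation between the boundary trace and $\partial V$. The monotonicity formula for $J_0$-holomorphic curves with totally real boundary then yields
$$
\int v^*\omega \;\geq\; C(J_0, V) \;:=\; \tfrac{1}{4}\,C'\,\dist(\partial V, o_L)^2,
$$
depending only on $J_0$ and $V$, exactly as in the proof of Theorem \ref{thm:thick-thin2}.

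For alternative (1), assume $\Image v \subset V$ and identify $V$ via the Darboux chart with an open subset of $T^*L$, on which $J_0 = J_g$. The squared distance to the zero section is $J_0$-plurisubharmonic, so its composition with $v$ is subharmonic on $\R \times (0,1)$. Finite energy of $v$ ensures the appropriate decay as $\tau \to \pm\infty$, and the weak maximum principle then forces the maximum of $\dist(v(\cdot), o_L)$ to be attained on $\R \times \{0,1\}$. On $\{t=1\}$ the constraint $v(\tau,1) \in L$ makes the distance vanish, while on $\{t=0\}$, the condition $v(\tau,0) \in \phi_{H(\rho(\tau))}^1(L)$ gives
$$
\dist(v(\tau,0), o_L) \;\leq\; \overline d(\phi_{H(\rho(\tau))}^1, id) \;\leq\; \max_{s\in[0,1]} \overline d(\phi_{H(s)}^1, id),
$$
which is the bound claimed in (1).

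The main technical subtlety is the $\tau$-dependence of the boundary Lagrangian $\phi_{H(\rho(\tau))}^1(L)$: \emph{a priori} this could spoil the maximum principle or the monotonicity step. However, because $J_0$ itself is $\tau$-independent and the interior PDE is genuinely $J_0$-holomorphic, the subharmonicity of $\dist(v(\cdot), o_L)^2$ in the interior is untouched; the moving boundary enters only through the pointwise boundary estimate, which is uniformly controlled by the engulfability bound. The admissible $\delta$ is dictated solely by the requirement that every $\phi_{H(s)}^1(L)$ lies inside $V$ and stays a definite distance from $\partial V$, a condition depending only on $V$ and hence only on $(M,\omega)$, which is what the theorem asserts.
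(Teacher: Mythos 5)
Your proposal is correct and takes essentially the same route as the paper, which simply states that the proof is identical to that of Theorem \ref{thm:thick-thin2}: the interior equation remains genuinely $J_0$-holomorphic, so the monotonicity formula and the plurisubharmonicity of the fiber-norm function are unaffected, and the $\tau$-dependence of the boundary Lagrangian enters only through the pointwise boundary estimate, which is uniformly controlled by the engulfability hypothesis.
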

\begin{proof} The proof is the same as that of Theorem \ref{thm:thick-thin2} and
so omitted.
\end{proof}

Now Theorem \ref{thm:handle2} together with this dichotomy of thick-thin
trajectories
enable us to decompose the Floer-Piunikhin (pre)-chain map
$$
\Psi_\CH: C_*(L) \to CF_*((L,L),H(1);U)
$$
into the thick-thin decomposition
\be\label{eq:Psi}
\Psi_\CH = \Psi_{\CH,(0)} + \Psi_\CH'
\ee
similar to \eqref{eq:del}. Again it follows from Theorem
\ref{thm:handle} that those $v$'s contributing non-trivially to
$\Psi_{\CH,(0)}$ are very thin (and those contributing to $\Psi_\CH'$
has area bigger than $C(J_0,V)$.)

We refer to section 5.3 \cite{fooo:book} or section 5 \cite{fooo:disc} for
the details of the construction of the Floer-Piunikhin (pre)-chain map $\Psi_\CH$.

\begin{rem}\label{rem:nonzerof}
The above Handle sliding lemma can be also proved by the same
argument for the Floer chain map between $f$ and $H(1) \# f$ when
$|f|_{C^2}$ is sufficiently small relative to $C(V,J_0)$.
This way one can avoid using the Bott-Morse version of Floer chain
map, the Floer-Piunikhin (pre)-chain map $\Psi_\CH$.
\end{rem}

\section{Computation of Local Floer homology $HF_*^{[id]}((L,L),H;U)$}
\label{sec:local-compu}

The role of the $C^2$-smallness in the construction of local Floer complex
$$
HF_*^{[id]}((L,L),H;U)
$$
in \cite{oh:imrn} was two-fold.
One is to make its flow $\phi_H$ $C^1$-small which
gives rise to a thick-thin decomposition of Floer complex. The other is
for the construction of (local) chain isomorphism between the
singular complex of $L$ and the Floer complex $CF_*^{[id]}((L,L),H;U)$
for which one needs to avoid bubbling (especially disc-bubbling) to ensure
the chain isomorphism property of the Floer-Piunikhin's continuation map. For the latter
purpose, we need to obtain some estimates of the filtration change for the
Floer chain map between the identity path and $\phi_H$ over the family
$$
\CH: s \mapsto H(s), \quad s \in [0,1].
$$

In the present context, we do not have such control over the filtration change
under the chain map we construct, \emph{even if one uses the adiabatic
chain map mentioned before}: Since we do not have any restriction on
the $C^2$-norm of $H$, we will not have much control on the mesh of the
partitions we make for the given approximating sequence $H_i$.
To overcome this lack of control of the filtration, we use Conley and Floer's idea of continuation
of maximal invariant sets \cite{conley:isolating,floer:fixed,oh:imrn},
which we now briefly summarize leaving more details thereto.

We denote by $\CM_1(J,(L',L);U)$ the set of pairs $(u,z)$ of $J$-holomorphic strips
$u$ attached to the pair $(L',L)$ whose image is contained in $U$ and a marked point
$z \in \R \times [0,1]$. We then denote
$$
\CS(J,(L',L);U) : = \overline{ev(\CM_1(J,(L',L);U))}
$$
and call it the maximal invariant set of the Cauchy-Riemann flow.
For a given one parametric family
$$
(J^{para},H^{para}) \in Map([0,1]^2,\CJ_\omega) \times C^\infty([0,1]^2 \times M,\R)
$$
with $H^{para} = \CH$ with $H(0) = 0$, we define a \emph{continuation}
$U^{para}$ between the maximal invariant sets $\CS_0 \subset U^0$ and $\CS_1 \subset U^1$
to be an open subset of $[0,1] \times M$ that satisfies
\begin{enumerate}
\item  For each $s \in [0,1]$ and all $t\in [0,1]$,
$$
L^s \subset U^s :=\{ x \in M \mid (x,s)\in U^{para} \}.
$$
\item
$$
\CS_s := \CS(J^s,(L^s,L);U^s)
$$
is isolated in $U^s$ for all $s \in [0,1]$.
\end{enumerate}

The following isolatedness
is a crucial ingredient in the construction of the isomorphism
$$
H_*(L) \cong HF^{[id]}(\phi_H^1(L),L), J;U).
$$
\begin{prop} $\CS(J_0,(\phi_{H(s)}^1(L),L);V)$ is isolated in $V$.
\end{prop}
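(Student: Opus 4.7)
The plan is to read off isolation directly from the thick--thin dichotomy established in Theorem \ref{thm:thick-thin2}. Since $\phi_{H(s)}$ is $V$-engulfable with $\overline d(\phi_{H(s)}^1, id) \leq \delta$ for some $\delta < d(V,\Theta)$, that theorem forces the following dichotomy on every finite-energy $J_0$-holomorphic strip $v$ with boundary on $(\phi_{H(s)}^1(L), L)$: either $\Image v \subset V$ and $\max_z d(v(z), o_L) \leq \delta$, or $\Image v \not\subset V$. Consequently, every strip contributing to $\CM_1(J_0, (\phi_{H(s)}^1(L), L); V)$ takes values in the tube $D_\delta(L) \subset V$, so
\[
\CS := \CS(J_0, (\phi_{H(s)}^1(L), L); V) \subset \overline{D_\delta(L)}.
\]

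Next I would construct an isolating neighborhood explicitly. Choose $\delta'$ with $\delta < \delta' < d(V, \Theta)$ and set $N := \overline{D_{\delta'}(L)}$. Then $N$ is compact (since $L$ is), $\CS \subset D_{\delta'}(L) = \operatorname{Int} N$, and $N \subset V$ by the choice $\delta' < d(V, \Theta)$. It remains to verify that $N$ is an isolating neighborhood in the Conley sense: any $J_0$-holomorphic strip whose entire image lies in $N$ should evaluate only into $\CS$. But this is tautological --- such a strip has image in $N \subset V$, so it belongs to $\CM_1(J_0, (\phi_{H(s)}^1(L), L); V)$ and its marked-point evaluations lie by definition in $\CS$. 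Hence the maximal invariant set $\operatorname{Inv}(N)$ equals $\CS$, which is the content of isolatedness.

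I do not expect a serious obstacle here, since the content of isolation is entirely supplied by Theorem \ref{thm:thick-thin2}, which rules out strips that linger inside $V$ while escaping every small neighborhood of $o_L$. The only mild checks are the compactness of $\CS$ (immediate since $\CS$ is closed by construction and contained in the compact tube $\overline{D_\delta(L)}$) and the observation that the dichotomy constants depend only on $J_0$ and $V$, not on the particular $s$, so the same neighborhood $N$ serves uniformly in $s \in [0,1]$. This uniformity is precisely what is needed when the proposition is invoked to assemble a continuation $U^{\mathrm{para}}$ between $\CS_0$ and $\CS_1$ in the subsequent construction of the isomorphism $H_*(L) \cong HF^{[id]}(\phi_H^1(L), L; J; U)$.
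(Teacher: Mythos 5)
Your proof is correct, and it is essentially the same underlying idea as the paper's, but packaged in a cleaner, more direct way. The paper's proof runs a continuity argument: it observes that isolatedness is an open condition in $s$, supposes for contradiction that there is a smallest parameter $s_0$ where isolation fails, extracts from this a $J_0$-holomorphic strip with image in $\overline V$ touching $\partial \overline V$, and then kills it with the maximum principle on the $J_0$-convex boundary of $V$. You bypass the first-failure/continuity device entirely: Theorem \ref{thm:thick-thin2} already packages the maximum-principle input into a dichotomy that holds \emph{uniformly} for each $s$ (since the constants depend only on $J_0$ and $V$), forcing every strip contributing to $\CM_1(J_0,(\phi_{H(s)}^1(L),L);V)$ to lie in $D_\delta(L)$; hence $\CS_s \subset \overline{D_\delta(L)} \subset V$ for all $s$ at once. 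Given the paper's definition of ``isolated in $V$'' (closure contained in $V$), that finishes the proof immediately, and your explicit exhibition of the isolating tube $N = \overline{D_{\delta'}(L)}$ with $\delta < \delta' < d(V,\Theta)$ is a useful strengthening toward the Conley-theoretic form of isolation. In short: same core tool (maximum principle on the $J_0$-convex domain $V$), but you route through the already-proved dichotomy instead of re-invoking the maximum principle inside a contradiction-by-continuity argument; your route avoids the implicit compactness/limit step the paper needs to produce the offending boundary-touching strip.
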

\begin{proof} Consider the family $\CS_s := \CS(J_0,(\phi_{H(s)}^1(L),L);V)$
for $0 \leq s \leq 1$. Clearly $\CX_0 =  \CS(J_0,(L,L);V)$ is isolated in $V$.
Furthermore the isolatedness is an open property. Let $0< s_0 \leq 1$ be the smallest
$s$ at which $\CS_{s_0}$ fails to be isolated in $V$. Then there exists some
$z_0 = (\tau_0,t_0) \in \R \times [0,1]$
such that $v(\tau_0,t_0) \in \del \overline V$. Since $v(\tau,0) \in \phi_{H(s)}^1(L)$
and $v(\tau,1) \in L$ and $\phi_{H(s)}^1(L), \, L \subset V$, this violates the maximum
principle applied to the $J_0$-convex boundary of $V$. This finishes the proof.
\end{proof}

Once we have set up these definitions and isolatedness, it immediately gives rise to the following
theorem

\begin{thm}\label{thm:local2} Suppose $(L',L;J;U)$ is as above.
Suppose $H \in \CH_{\delta}^{engulf}(L;U)$ for a sufficiently small $\delta=\delta(\e) > 0$.
Then for any small perturbation $J'$ of $J$ for which $\MM(L',L;J';U)$ is Fredholm regular,
\begin{enumerate}
\item
the homomorphism
$$
\del _U: CF(L',L;J';U) \to CF(L',L;J';U),\quad \del_U x=\sum_{y \in L \cap \phi_H^1(L)}
\langle \del_U x, y\rangle y
$$
satisfies $\del_U \circ \del_U = 0$.
\item And the corresponding quotients
$$
HF(L,L;(H,J');U) \cong HF^*(L',L;J';U) = \ker\del_U/\operatorname{im} \del_U
$$
are isomorphic under the continuation $(\CS^{para}, J^{para}, H^{para}, U^{para})$
as long as the continuation is Floer-regular at the ends $s= 0, \, 1$.
\end{enumerate}
\end{thm}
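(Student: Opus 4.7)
The plan is to carry out Floer's Conley-type continuation argument \cite{floer:fixed} entirely within the local framework set up in Sections \ref{sec:local}--\ref{sec:handle}, using the handle sliding lemma to propagate the isolatedness of the maximal invariant set across the parameter family.

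For part (1), I would first observe that all the ingredients are already in place. Theorem \ref{thm:thick-thin2} gives a thick-thin dichotomy for the strip moduli space, and by the isolatedness proposition just proved the thin part of $\CM(L',L;J';U)$ is contained in a compact subset of $U$. In any Gromov-type limit of thin strips, each component of the limiting configuration must itself have image in $V$, and is therefore either a thin broken strip or a $J_0$-holomorphic sphere/disc with image in $V$. The $J_0$-convexity of $\partial V$ together with the maximum principle rules out the latter (a nontrivial such curve would achieve an interior maximum of the defining function of $V$ on its image). Hence the compactified moduli space of thin index-one strips has the standard boundary structure, and the usual gluing argument yields $\del_U\circ\del_U = 0$.

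For part (2), I would parametrize the isolatedness proposition over the continuation parameter. Define
\[
\CS_s := \CS(J^s,(L^s,L);U^s), \qquad s\in[0,1],
\]
and show that the set of $s$ for which $\CS_s$ is isolated in $U^s$ is both open and closed. Openness is automatic from standard compactness of $J$-holomorphic strips under small perturbations of Lagrangian boundary, almost complex structure and domain. Closedness is the substantive point: if $s_n\to s_\infty$ and $v_n$ were a sequence of thin strips whose images limit to a point on $\partial V\cap U^{s_\infty}$, then by Theorem \ref{thm:handle2} applied to the constant $s$-homotopy at $s_\infty$, such a limit would either remain thin in $V$ (contradicting escape to $\del V$) or carry area at least $C(J_0,V)$ (contradicting the very-thin bound $\max d(v_n(z),o_L)\le \overline d(\phi_{H(s_n)}^1,id)<\delta$ for $n$ large). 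The $J_0$-convexity of $\partial V$ prevents tangential touching as well. Hence $(\CS_s)_{s\in[0,1]}$ is a continuation of isolated invariant sets in Conley's sense.

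With isolatedness in hand, the continuation chain maps
\[
h^{\pm}_U : CF(L^0,L;J^0;U^0)\longrightarrow CF(L^1,L;J^1;U^1)
\]
are defined by counting elongated $s$-dependent Cauchy-Riemann solutions whose graphs are contained in $U^{para}$. Theorem \ref{thm:handle2}, applied with $\rho$ and its time-reversal, guarantees the thin part of these parametrized moduli spaces stays in the $\delta$-neighborhood of $o_L$, so the counts make sense and produce chain maps by the usual codimension-one boundary analysis (no bubbling in $V$ by the argument of part (1)). A two-parameter family and a standard homotopy-of-homotopies argument then gives $h^-_U\circ h^+_U\simeq \mathrm{id}$ and likewise in the other direction, proving that the induced map on homology is an isomorphism at the Floer-regular endpoints.

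The main obstacle is the closedness step in the continuation of isolated invariant sets: one must exclude the possibility that a family of thin strips degenerates onto $\partial V$ as the parameter moves, and this is exactly where the maximum principle on the $J_0$-convex domain $V$ combined with the area lower bound $C(J_0,V)$ from Theorem \ref{thm:handle2} must be used in tandem. Once this is secured, the rest is formal Floer theory carried out within $U^{para}$.
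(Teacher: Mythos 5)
Your proposal is correct and follows essentially the same strategy the paper has in mind: part (1) reduces to the observation (already recorded in Proposition~\ref{prop:HF=H}) that concatenations of thin strips stay thin and that no sphere or disc bubble can live inside the exact domain $V$, so the thin index-one moduli space compactifies without extra strata; part (2) is the Conley--Floer continuation of isolated invariant sets, whose key input is precisely the isolatedness proposition proved just before the theorem via the maximum principle on the $J_0$-convex boundary of $V$. The paper itself treats Theorem~\ref{thm:local2} as an immediate consequence of these two ingredients and does not spell out the details you provide; your ``open and closed'' formulation of the parametrized isolatedness is logically the same as the paper's ``smallest failing $s$'' argument, both resting on the same maximum-principle contradiction at $\del\overline V$. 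One small imprecision: your appeal to Theorem~\ref{thm:handle2} in the closedness step conflates the area alternative with the distance alternative; the cleaner (and what the paper actually uses) argument is simply that a thin strip has image in $D_\delta(L)$, which is compactly contained in $V$ since $\delta < d(V,\Theta)$, so its image cannot approach $\del V$. The area lower bound plays no role there.
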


After we establish this continuation invariance, we can apply it to the family $\CH$ with
$H(0) = 0$ and prove the following theorem. The proof of this theorem together with that of 
Corollary \ref{cor:rhoV10<E-F} may be the most novel part of the mathematics of the 
present paper beyond those already established in \cite{oh:imrn}, \cite{oh:jdg}.

\begin{thm}\label{thm:local-homology} Consider $\CH=\{H(s)\} \subset \CH^{engulf}_\delta(M)$
with $H(0) = 0$. Then whenever $0 < \delta < d(V,\Theta)$,
$$
HF^{[id]}(\phi_H^1(L),L;J';U) \cong H_*(L;\Z)
$$
for any $J'$ sufficiently close to $J_0$ in $C^\infty$-topology.
\end{thm}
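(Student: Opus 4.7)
The plan is to reduce to the trivial endpoint $H(0)=0$ of the family $\CH$ via the continuation invariance of local Floer homology established in Theorem \ref{thm:local2}, and then identify the resulting local Floer homology with $H_*(L;\Z)$ via a small Morse perturbation. First, I would verify the hypotheses of Theorem \ref{thm:local2} along the one-parameter family. The proposition preceding Theorem \ref{thm:local2} already shows that the maximal invariant set $\CS_s := \CS(J_0,(\phi_{H(s)}^1(L),L);V)$ is isolated in $V$ for every $s\in[0,1]$: since $\phi_{H(s)}^1(L),L\subset V$ for all $s$ by the engulfability and the hypothesis $\overline d(\phi_{H(s)}^1,id)<d(V,\Theta)$, any Floer trajectory reaching $\del V$ would contradict the maximum principle on the $J_0$-convex boundary. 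Setting $U^{para}=[0,1]\times V$ (or a slightly shrunk version inside $U$) therefore yields a valid continuation.

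Second, I would choose a generic $C^\infty$-small perturbation $J'$ of $J_0$ so that the Floer moduli spaces are Fredholm regular at $s=0$ and $s=1$. Applying Theorem \ref{thm:local2}(2) then produces the continuation isomorphism
\be
HF^{[id]}(\phi_H^1(L),L;J';U)\;\cong\;HF^{[id]}(L,L;0;J';U).
\ee
Third, the right hand side is computed by perturbing the degenerate Hamiltonian $0$ by a $C^2$-small Morse function $\varepsilon f$ on $L$ (extended to a $V$-engulfable Hamiltonian by cutting off outside $V$, see Remark \ref{rem:cut-offing}). The elements of $\Crit^{[id]} \CA_{\varepsilon f}$ are in bijection with $\Crit f$, and by the adiabatic/$C^2$-small argument of \cite{oh:imrn} (going back to \cite{floer:witten}), for $\varepsilon$ sufficiently small all contributing thin trajectories are graphs of gradient flow lines of $f$, so that $\del_U$ reduces to the Morse boundary map of $f$. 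This gives $HF^{[id]}(L,L;\varepsilon f;J';U)\cong H_*(L;\Z)$, and the continuation invariance from $\varepsilon f$ to $0$ (both within the isolating neighborhood $V$) combines with the previous step to yield the claimed isomorphism.

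The main obstacle is precisely what forces the Conley-style continuation approach: unlike in the $C^2$-small setting of \cite{oh:imrn}, we have no uniform control of the action filtration or of the area of thin trajectories along the path $\CH$, since the thick-thin dichotomy here is purely geometric rather than action-theoretic (Theorem \ref{thm:thick-thin2}, Theorem \ref{thm:handle2}). Consequently, the standard Floer-Piunikhin chain map constructed via a linear homotopy $s\mapsto sH$ is unavailable, because such a homotopy need not remain $V$-engulfable. The continuation of maximal invariant sets circumvents this difficulty: the isolation of $\CS_s$ in $V$ is inherited from the $J_0$-convexity of $\del V$ at every $s$, and this topological isolation alone is what propagates the local Floer homology through the family. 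The remaining technical point is to check that the continuation isomorphism of Theorem \ref{thm:local2} is compatible with the small Morse perturbation at the $s=0$ endpoint, which is standard once the isolating neighborhood is fixed.
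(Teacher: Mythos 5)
Your proposal is correct in spirit but takes a route that is genuinely different from the paper's, and in fact it is the alternative route the paper itself flags in Remark \ref{rem:nonzerof}. The paper's proof does not invoke Theorem \ref{thm:local2} as the main engine; instead it directly constructs the Bott--Morse Floer--Piunikhin (pre)-chain maps $\Psi_{\CH^\rho,(0)}$ and $\Phi_{\CH^{\widetilde\rho},(0)}$ between $CF_*^{[id]}((L,L),0;U)\cong (C_*(L),\del_{(0)})$ and $CF_*^{[id]}((L,L),H;U)$, isolated into the thin part by Theorems \ref{thm:thick-thin2} and \ref{thm:handle2}, and then produces chain homotopies making their two compositions homotopic to the identities. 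The computation $HF_*^{[id]}((L,L),0;U)\cong H_*(L)$ is then done inside $T^*L$ by the arguments of \cite{floer:witten,oh:imrn}. Your approach---a continuation isomorphism via Theorem \ref{thm:local2}(2) followed by a $C^2$-small Morse perturbation $\varepsilon f$ at the degenerate end---is exactly what Remark \ref{rem:nonzerof} suggests as a way to avoid the Bott--Morse machinery, and it buys a simpler chain map at the cost of having to manage the extra perturbation and verify it stays $V$-engulfable.

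One real gap in the write-up as given: you apply Theorem \ref{thm:local2}(2) to produce the displayed isomorphism $HF^{[id]}(\phi_H^1(L),L;J';U)\cong HF^{[id]}(L,L;0;J';U)$ \emph{before} introducing the Morse perturbation, but Theorem \ref{thm:local2}(2) explicitly requires Floer-regularity at \emph{both} ends $s=0,1$, and the $s=0$ end with $H(0)=0$ is the clean-intersection Bott--Morse degenerate case, so the hypothesis fails there. The operations must be reordered: first replace the family $\CH$ by one starting at $\varepsilon f$ (cut off as in Remark \ref{rem:cut-offing} so it remains $V$-engulfable, with $\varepsilon$ chosen small enough that $\overline d(\phi_{\varepsilon f}^1,id)<d(V,\Theta)$ still holds and the concatenated family stays in $\CH^{engulf}_\delta$), then apply the continuation isomorphism of Theorem \ref{thm:local2}(2) with both ends now non-degenerate, and finally compute $HF^{[id]}(\phi_{\varepsilon f}^1(L),L;J';U)\cong H_*(L;\Z)$ by the Floer--Witten argument. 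You gesture at this at the very end as a "remaining technical point," but it is a hypothesis of the theorem you lean on, not an afterthought; the argument as written is not yet a valid application of Theorem \ref{thm:local2}.
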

\begin{proof}
We consider the homotopy
$$
\CH: s\mapsto H(s)
$$
and its reversal.
Using the isolatedness of thin trajectories in Theorem \ref{thm:thick-thin2} and
Theorem \ref{thm:handle2},  we define the local Floer-Piunikhin (pre)-chain maps
\beastar
\Psi_{\CH^\rho,(0)}: CF_*^{[id]}((L,L),0;U) & \to & CF_*^{[id]}((L,L),H;U), \\
\Phi_{\CH^{\widetilde \rho},(0)}: CF_*^{[id]}((L,L),H;U) & \to & CF_*^{[id]}((L,L),0;U)
\eeastar
and their compositions
\beastar
\Psi_{\CH^\rho,(0)}\circ \Phi_{\CH^{\widetilde \rho},(0)}:
CF_*((L,L),0;U) & \to & CF_*((L,L),H;U),\\
\Phi_{\CH^{\widetilde \rho},(0)} \circ \Psi_{\CH^\rho,(0)}: CF_*((L,L),H;U) & \to & CF_*((L,L),0;U).
\eeastar
Theorem \ref{thm:thick-thin2} and Theorem \ref{thm:handle2}
imply that all the above maps properly restrict to the maps
between $CF_*^{[id]}(0;U) \cong (C_*(L),\del_{(0)})$, the singular chain complex,
and $CF_*^{[id]}(H;U)$ by isolating the thin trajectories. Since the thin
trajectories cannot bubble-off, all these maps
become chain maps between them.
Therefore $\Psi_{\CH^\rho,(0)}$ and $\Phi_{\CH^{\widetilde \rho},(0)}$
induce the isomorphisms between $HF_*^{[id]}((L,L),0;U) \cong H_*(L)$ and $HF_*^{[id]}((L,L),H;U)$
which are inverses to each other.
More precisely, there exist a chain homotopy maps between
$\Psi_{\CH^\rho,(0)}\circ \Phi_{\CH^{\widetilde \rho},(0)}$ and
$id_{C_*(L)}$, and $\Phi_{\CH^{\widetilde \rho},(0)} \circ \Psi_{\CH^\rho,(0)}$ and $id_{CF_*(H)}$ respectively.
(See \cite{oh:newton,fooo:disc} for the proof of existence of such a chain homotopy.)
Once this is established, we can compute $HF_*^{[id]}((L,L),0;U)$ inside
the cotangent bundle $T^*L$. Then the arguments used in
\cite{floer:witten} and \cite{oh:imrn} prove the theorem.

This finishes the proof.
\end{proof}

\begin{rem} In the above proof, we would like to emphasize that no bubbling-off for
the thin trajectories holds not because the area will be big but because a bubble
must go out of the given Darboux neighborhood of $L$ and hence cannot be thin.
As we mentioned before we recall that thin trajectories could have large area.
\end{rem}

\section{Localization on the cotangent bundle}
\label{sec:cotangent}

The main purpose of this section is to use the local Floer complex
constructed on the cotangent bundle and localize the construction of
Lagrangian spectral invariants introduced in \cite{oh:jdg} which has been further studied in
\cite{oh:lag-spectral}.

We will also localize the triangle product similarly and the basic phase function
in the current context of approximations of engulfable topological
Hamiltonian loops in Appendix, for a future purpose.

\subsection{Localization of Lagrangian spectral invariants $\rho^{lag}(H;1)$}

We first specialize the general definition of spectral invariants
$\rho^{lag}(F;1)$ and $\rho^{lag}_V(F;1_0)$ to the cotangent bundle.
In this case of the Hamiltonian deformations of the zero section in
the cotangent bundle, we do not need to use the Novikov ring but only use
the coefficient ring $R$ and have only to use the single valued
classical action functional
$$
\CA_{F}^{cl}(\gamma) = \int \gamma^*\theta - \int_0^1
F(t,\gamma(t))\, dt
$$
in the evaluation of the level of the chains.

We now assume that $L$ is connected. Using the isomorphism
$$
(\Psi_{\CH})_*: H_*(L,\Lambda_{(\omega,L)}) \to HF_*(F)
$$
where $\Psi_{\CH}$ is the chain map defined in section \ref{sec:handle},
we define
$$
\rho^{lag}(F;1) = \inf_{\alpha \in (\Psi_\CH)_*([L])} \lambda_F(\alpha)
$$
which is also the same as
$$
\inf_\lambda \left\{HF_*^{[id],\lambda}(F) \neq 0\right\}.
$$
This is because $HF_n(F;V)$ or $HF_n(F)$ has rank one and so all
isomorphisms $H_*(L) \to HF_*(F)$ maps the fundamental cycle $[L]$ of $L$ to
the same image modulo a non-zero scalar multiple and so the associated
spectral invariants coincide (Confomality Axiom \cite{oh:alan}).
Similarly we define
$$
\rho^{lag}_V(F;1) = \inf_{\alpha \in (\Psi_{\CH,(0)})_*([L])} \lambda_F(\alpha)
$$
which is also the same as
$$
\inf_\lambda \left\{HF_*^{[id],\lambda}(F;V) \neq 0\right\}.
$$
\begin{rem}
We would like to mention that the homomorphism $(\Psi_\CH)_*$ and $(\Psi_{\CH,(0)})_*$
do not depend on the choice of homotopy $\CH$. But for the case $(\Psi_{\CH,(0)})_*$
the whole family of Hamiltonians $H(s)$ for $s \in [0,1]$ should be assumed to
be $V$-engulfable. For example, the commonly used the linear homotopy $s \mapsto s F$
may not be $V$-engulfalbe even when $F$ is $V$-engulfable. Because of this, the linear
homotopy cannot be used to construct the local chain map in general. Here is one place where
the presence of the engulfable family $\CH$ of Hamiltonians is used in the definition of
local Lagrangian spectral invariants.
\end{rem}

Now we prove the following coincidence theorem of global and local
spectral invariants.

\begin{thm} \label{thm:localrho=globalrho} Let $\CH=\{H(s)\}$ be a $V$-engulfable isotopy
with $H(0) = 0$ and $F = H(1)$. Then we have
$$
\rho^{lag}_V(F;1_0) = \rho^{lag}(F;1)
$$
\end{thm}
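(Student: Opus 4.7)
My plan is to prove the two inequalities $\rho^{lag}(F;1) \leq \rho^{lag}_V(F;1_0)$ and $\rho^{lag}_V(F;1_0) \leq \rho^{lag}(F;1)$, both by exploiting the filtration-preserving behavior forced by the thick-thin decomposition. The essential ingredient is Theorem \ref{thm:thick-thin2}: writing $\del = \del_{(0)} + \del'$, the thick contribution $\del'$ strictly drops the action filtration by at least $C(J_0,V)$ uniformly. Hence, in the Novikov-completed global complex, the naive inclusion $\iota: CF^{[id]}(F;V) \hookrightarrow CF(F)$ and the naive projection onto canonical lifts $\pi: CF(F) \to CF^{[id]}(F;V)$ fail to be chain maps only by corrections of uniformly smaller action.

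For $\rho^{lag}(F;1) \leq \rho^{lag}_V(F;1_0)$, I would build a chain map $\tilde\iota: (CF^{[id]}, \del_U) \to (CF, \del)$ by successive approximation. Set $\iota_0 = \iota$ and iteratively correct $\iota_k = \iota_{k-1} + \eta_k$ so that the chain-map defect $\del \iota_k - \iota_k \del_U$ has image in action filtration below $\lambda - k\cdot C(J_0,V)$ on inputs of action $\lambda$. The defect at step $k-1$, which comes from the $\del'$-contribution, is bounded by solving a cocycle-bounding problem in the appropriate filtered piece of $CF$; this is possible because on the associated graded (strips of width $C(J_0,V)$) the differential reduces to $\del_{(0)}$ and the extension $CF = CF^{[id]} \otimes \Lambda_{(\omega,L)}$ identifies the associated graded with the local complex tensored with Novikov grading. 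The series $\tilde\iota = \iota + \eta_1 + \eta_2 + \cdots$ converges in the Novikov completion thanks to the uniform geometric drop $C(J_0,V)$. Because the leading term is $\iota$, $\lambda_F(\tilde\iota(\alpha)) = \lambda_F(\alpha)$ for every local cycle $\alpha$; and the thick-thin split of the Piunikhin moduli (Theorem \ref{thm:handle2}) applied to $\Psi_\CH([L]) = \iota(\Psi_{\CH,(0)}([L])) + \Psi'_\CH([L])$ shows $\tilde\iota_*(1_0) = 1$. Taking infimum over local cycles representing $1_0$ gives the inequality.

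For the reverse inequality $\rho^{lag}_V(F;1_0) \leq \rho^{lag}(F;1)$, the symmetric construction applies: the $R$-module splitting $CF = CF^{[id]} \oplus CF^{\text{non-}[id]}$ gives a naive projection $\pi$ with chain-map failure absorbed by $\del'$; iterating as above produces $\tilde\pi: (CF, \del) \to (CF^{[id]}, \del_U)$ satisfying $\lambda_F(\tilde\pi(\beta)) \leq \lambda_F(\beta)$ and $\tilde\pi_*(1) = 1_0$, yielding the bound.

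The main obstacle is the cocycle-bounding step at each stage of the iteration, which requires a careful analysis of the associated graded complex induced by the action filtration and a demonstration that the local and global differentials agree on it within the action range of interest. In the $C^2$-small setting of \cite{oh:imrn} the iteration effectively terminates because thin trajectories themselves drop action by only a small amount, clamping all of $CF^{[id]}$ to a narrow action window; under the present $C^0$-smallness, however, thin trajectories may have arbitrarily large action so the correction series is genuinely infinite and must be controlled in the Novikov completion using the uniform gap $C(J_0,V)$ for thick terms. A subsidiary subtlety, already flagged in the remark preceding the statement, is that one cannot shortcut the argument by the linear homotopy $s \mapsto sF$ since that homotopy need not be $V$-engulfable; every step of the construction must be carried out through the given engulfable family $\CH$.
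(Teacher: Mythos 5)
Your proposal misses the decisive simplification available in this theorem's setting, and the iterative correction scheme it substitutes has a real gap. Theorem \ref{thm:localrho=globalrho} lives in the cotangent bundle section: $L = o_L \subset T^*L$ and $V$ is a $J_0$-convex neighborhood of the zero section, with $J_0$ the Sasaki-type structure $J_g$ so that $T^*L$ is exhausted by $J_0$-convex disk bundles. Both boundary Lagrangians $\phi_{H(s)}^1(o_L)$ and $o_L$ lie inside $D^\delta(T^*L)$ for every $s$ of the engulfable family $\CH$, so the maximum principle forces \emph{every} finite-energy $J_0$-holomorphic strip — both for the boundary equation \eqref{eq:CRdvJ0} and for the moving-boundary continuation equation \eqref{eq:CRdvJ0moving} — to stay inside $D^\delta(T^*L)$. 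Alternative (2) of Theorems \ref{thm:thick-thin2} and \ref{thm:handle2} is vacuous here: $\del' = 0$ and $\Psi'_\CH = 0$. Hence the local complex $(CF^{[id]}(F;V),\del_{(0)})$ \emph{is} the global complex $(CF(F),\del)$, $(\Psi_{\CH,(0)})_\#([L]) = (\Psi_\CH)_\#([L])$ on the nose, and the equality of spectral invariants is immediate with no correction series and no Novikov completion. The only place the engulfability of the whole family $\CH$ enters is precisely the subtlety you flagged at the end: it guarantees the moving boundary condition stays inside $V$ for every $\tau$, which the linear homotopy $s \mapsto sF$ need not do.

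Even leaving aside that it is solving a problem that doesn't arise, the iteration you sketch doesn't close. The cocycle-bounding step requires the differential induced on the associated graded by action windows of width $C(J_0,V)$ to reduce to $\del_{(0)}$; but, as you yourself acknowledge, thin trajectories can drop the action by arbitrarily large amounts in the $C^0$-small regime, so $\del_{(0)}$ does not respect that coarse filtration and the associated graded does not simplify as claimed. This destroys the inductive control on the defect of $\iota_k$. The appeal to $\Lambda_{(\omega,L)}$ is also out of place: in $T^*L$ the form is exact, $\Lambda_{(\omega,L)}$ is trivial, and the action functional is single-valued — a fact the section explicitly exploits. If you want an argument valid beyond the cotangent bundle you would need the uniform-gap machinery to do far more work than it does here, but that is not what this theorem asserts.
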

\begin{proof}
For the given family
$$
\CH: s \mapsto H(s), \quad s \in [0,1],
$$
we consider the continuation
of maximal invariant sets defined in section \ref{sec:cotangent}.
For given one parametric family
$$
(J^{para},H^{para}) \in Map([0,1]^2,\CJ_\omega) \times C^\infty([0,1]^2 \times M,\R)
$$
with $H^{para} = \CH$ with $H(0) = 0$ and $J^{para} = J_0$, all the Floer trajectories
contributing to these maximal invariant sets are thin and so become very
thin. In particular, the maximal invariant sets $\CS_s$ are all contained in the given
neighborhood $[0,1] \times D^\delta(T^*L)$ for all of $[0,1] \times M$
and $\CS_0 = L$.

This implies that the local Floer complex $(CF_*^{[id]}(F),\del_{(0)})$ and
the global one $(CF_*(F),\del)$ define the same complex and also satisfies
$$
(\Psi_{\CH})_*([L]) = (\Psi_{\CH,(0)})_*([L])
$$
under the identification, provided $F$ is connected to $0$ via an engulfable
Hamiltonian homotopy $\CH = \{H(s)\}$ is given. This finishes the proof.
\end{proof}

The proof of the following corollary requires some care unlike the
case of global Floer homology because the standard linear homotopy $s \mapsto sF$
may not be $V$-engulfable.

\begin{cor}\label{cor:rhoV10<E-F} For any $F \in \CH_\delta^{engulf}(T^*L;V)$,
$\rho^{lag}_V(F;1_0) \leq E^-(F)$.
\end{cor}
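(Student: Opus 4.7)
The plan is to circumvent the obstruction noted in the remark above by using Theorem~\ref{thm:localrho=globalrho} to replace the local invariant with the global one, and only then invoke the standard linear-homotopy estimate. Concretely, I will first produce a $V$-engulfable homotopy $\CH = \{H(s)\}_{s\in[0,1]}$ from $0$ to $F$, then apply Theorem~\ref{thm:localrho=globalrho} to conclude $\rho^{lag}_V(F;1_0) = \rho^{lag}(F;1)$, and finally quote the global inequality $\rho^{lag}(F;1) \leq E^-(F)$ from \cite{oh:jdg,oh:alan}.

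For the first step, given $F \in \CH_\delta^{engulf}(T^*L;V)$, the natural candidate is the time-reparametrization
\[
H(s)(t,x) := s\, F(st,x), \qquad s \in [0,1],
\]
whose Hamiltonian flow satisfies $\phi_{H(s)}^t = \phi_F^{st}$. Since $st \in [0,1]$ whenever $s,t \in [0,1]$, engulfability of $F$ gives $\phi_{H(s)}^t(L) = \phi_F^{st}(L) \subset V$ for all such $s,t$. This verifies that $\CH$ is a $V$-engulfable isotopy connecting $0$ to $F$, which is exactly the hypothesis required by Theorem~\ref{thm:localrho=globalrho}. Note that the naive linear homotopy $s\mapsto sF$ need not be engulfable, which is precisely the point raised in the preceding remark, so the reparametrization trick is what makes the local and global invariants comparable.

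With this engulfable homotopy in hand, Theorem~\ref{thm:localrho=globalrho} yields
\[
\rho^{lag}_V(F;1_0) = \rho^{lag}(F;1).
\]
For the right-hand side, the inequality $\rho^{lag}(F;1) \leq E^-(F)$ is the classical Lagrangian spectral estimate proved in \cite{oh:jdg,oh:alan} by evaluating the action change along the linear homotopy $s \mapsto sF$; this computation takes place entirely at the level of the \emph{global} Floer complex on $T^*L$, where no engulfability of the intermediate Hamiltonians is needed. Combining the two displays gives the claimed bound $\rho^{lag}_V(F;1_0) \leq E^-(F)$.

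The main conceptual point is that the obstacle flagged in the remark, namely the failure of the linear homotopy to be engulfable, is not genuinely an obstacle to bounding $\rho_V^{lag}$: one pays the cost only once, via the reparametrized engulfable homotopy, to identify the local invariant with the global one, after which the classical non-engulfable linear homotopy argument is allowed on the global side. The only modest care needed is to check that the reparametrized homotopy indeed satisfies the engulfability hypothesis of Theorem~\ref{thm:localrho=globalrho}, which is immediate from $\phi_{H(s)}^t = \phi_F^{st}$ and $[0,s]\cdot[0,1] \subseteq [0,1]$.
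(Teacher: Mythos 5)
Your proof is correct and follows essentially the same route as the paper: apply Theorem~\ref{thm:localrho=globalrho} to reduce the local bound to the global bound $\rho^{lag}(F;1) \leq E^-(F)$, which is established by the standard linear-homotopy computation on the global Floer complex. A pleasant extra detail in your write-up is the explicit time-reparametrized homotopy $H(s)(t,x) = s\,F(st,x)$, with $\phi_{H(s)}^t = \phi_F^{st}$, which makes concrete the $V$-engulfable isotopy that the paper's proof leaves implicit when invoking Theorem~\ref{thm:localrho=globalrho}.
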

\begin{proof} Knowing that Theorem \ref{thm:localrho=globalrho} holds, we can consider
the linear homotopy $s \mapsto sF$ and denote by
$\Psi_F^{lin}$ the associated Floer-Piunikhin chain map $C_*(L) \to
CF_*(F)$ for the \emph{global} Floer complex instead. Then it is standard
that $\Psi_F^{lin}$ also induces an isomorphism
$H_*(L) \to HF_*(F)$ in global Floer homology. More specifically we have
$$
(\Psi_F^{lin})_*([L]) = (\Psi_\CH)_*([L]).
$$
(We emphasize that the corresponding cycles $(\Psi_F^{lin})_\#([L])$,
$(\Psi_\CH)_\#([L])$ are different in general. For example, the general estimate of
the level of the cycle $(\Psi_\CH)_\#([L])$ involve the derivative $\frac{\del H(s)}{\del s}$
which is uncontrolled in the topological Hamiltonian homotopy.)
Using the cycle $(\Psi_F^{lin})_\#([L])$, it is easy to obtain the upper bound
$\rho(F;1) \leq E^-(F)$ by the standard calculations. (See  \cite{oh:jdg,oh:ajm1,oh:alan} for example).
This together with Theorem \ref{thm:localrho=globalrho}
gives rise to the proof.
\end{proof}

\begin{rem}\label{rem:linearhomotopy}
 We would like to emphasize that unlike the isotopy $\CH = \{H(s)\}$
with $H(s) \in \CH_\delta^{engulf}(L;V)$, the isotopy of the time-one maps $\phi_{sF}^1$
for the linear isotopy $\CH^{lin}: s \mapsto sF$ with $F = H(1)$ may not be
uniformly $C^0$-small and hence the associated Floer trajctories of the chain map moduli space could go
out of the neighborhood $V$. Because of this, the linear isotopy cannot be used
to define a chain map from $C_*(L)$ to the local Floer complex $CF^{[id]}_*(F;V)$
and so the inequality stated in this corollary does not follow from by now the
standard computation used in \cite{oh:alan} to prove $\rho^{lag}(F;1_0) \leq E^-(F)$ for the global
invariant.
\end{rem}

\subsection{Localization of the basic phase function}

We consider the Lagrangian pair
$$
(o_N, T^*_qN), \quad q \in N
$$
and its associated Floer complex $CF(H;o_N, T^*_qN)$ generated by
the Hamiltonian trajectory $z:[0,1] \to T^*N$ satisfying
\be\label{eq:Hamchordeq} \dot z = X_H(t,z(t)), \quad z(0) \in o_N,
\, z(1) \in T^*_qN. \ee Denote by $\CC hord(H;o_N,T^*_qN)$ the set of
solutions. The differential $\del_{(H,J)}$ on $CF(H;o_N, T^*_qN)$ is
provided by the moduli space of solutions of the perturbed
Cauchy-Riemann equation
\be\label{eq:CRHJq}
\begin{cases}
\dudtau + J\left(\dudt - X_H(u) \right) = 0 \\
u(\tau,0) \in o_N, \, u(\tau,1) \in T^*_qN.
\end{cases}
\ee

An element $\alpha \in CF(H;o_N,T^*_qN)$ is expressed as a finite
sum
$$
\alpha = \sum_{z \in \CC hord(H;o_N,T_q^*N)} a_z [z], \quad a_z \in
\Z.
$$
We denote the level of the chain $\alpha$ by
\be\label{eq:lambdaH}
\lambda_H(\alpha): = \max_{z \in \supp \alpha} \{\CA^{cl}_H(z)\}.
\ee
The resulting invariant $\rho(H;\{q\})$ is to be defined by the mini-max
value
$$
f_H(q): = \inf_{\alpha \in [q]}\lambda_H(\alpha)
$$
where $[q] \in H_0(\{q\};\Z)$ is a generator of the homology group
$H_0(\{q\};\Z)$.

Equivalently, we can consider the pair $(L_H,T_q^*N)$ for the action functional
$$
\CA^{(0)}(\gamma):= \int \gamma^*\theta + h_H(\gamma(0))
$$
defined on $\Omega(L_H,T_q^*N)$ which defines the geometric version of
the Floer complex $CF(L_H,T_q^*N)$ via the equation
\be\label{eq:dvdJ0}
\begin{cases}
\dvdtau + J_0 \dvdt = 0 \\
v(\tau,0) \in L_H, \, v(\tau,1) \in T^*_qN.
\end{cases}
\ee

Now by the same argument performed in sections \ref{sec:thick-thin} and \ref{sec:handle},
we can localize the Floer complex to $CF(L_H,T_q^*N;V)$ and define the
local version of the spectral invariant $\rho^{lag}_V(H;\{q\})$ by
$$
f_H^V(q) = \inf_{\alpha \in [q]}\lambda_H(\alpha)
$$
where $[q] \in H_0(\{q\};\Z)$ is a generator of the homology group
$H_0(\{q\};\Z)$.  By varying $q \in N$, this
defines a function $f_H^V: N \to \R$ which is precisely the local version of
the basic phase function defined in \cite{oh:jdg}.
We denote the associated graph part of the front $W_{R_H}$ of the $L_H$ by $G_{f_H^V}$.

We summarize the main properties of $f_H^V$ whose proofs are verbatim the same
as those established for the (global) basic function $f_H$ in \cite{oh:jdg},
\cite{oh:lag-spectral} by replacing the global Floer complex $CF_*(H)$ by the
local complex $CF_*^{[id]}(H;V)$. First we have

\begin{thm}
Let $H=H(t,x) \in \CH_\delta^{engulf}(T^*N)$ and the Lagrangian
submanifold $L_H = \phi_H^1(o_N)$. Consider the function $f_H^V$ defined above.
Then for any $x \in L_H$
\be\label{eq:fversush}
f_H^V(\pi(x)) = h_H(x) = \CA^{cl}_H(z_x^H)
\ee
for some Hamiltonian chord $z_x^H$ ending at $L_H \cap T_{\pi(x)}^*N$.
\end{thm}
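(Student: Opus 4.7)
The plan is to reduce the statement to spectrality of the local Lagrangian spectral invariants, combined with the identification of the local Floer generators of the pair $(L_H, T_q^*N;V)$ with the points of $L_H\cap T_q^*N$ via Lemma \ref{lem:equiv}. The key fact to exploit is that the local Floer homology at a cotangent fiber is rank one in degree zero (the analog of Theorem \ref{thm:local-homology} for the pair $(L_H,T_q^*N)$ inside $V$), so the class $[q]$ used in the mini-max definition of $f_H^V$ is a single generator up to scalar.

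First I would fix $q\in N$ and set up the generators. Since $H$ is $V$-engulfable with $\overline d(\phi_H^1,id)$ sufficiently small, $L_H$ is $C^0$-close to $o_N$, lies inside $V$, and meets $T_q^*N$ transversally in a finite set contained in $V$. Each intersection point $x\in L_H\cap T_q^*N$ determines a constant path $z_x\in\Omega(L_H,T_q^*N)$, which by the fiber analog of Lemma \ref{lem:equiv} corresponds to the Hamiltonian chord $z_x^H(t)=\phi_H^t((\phi_H^1)^{-1}(x))$ ending at $x$. An elementary calculation using $L_H=\phi_H^1(o_N)$ and the defining formula
\[
\CA^{(0)}(\gamma)=\int\gamma^*\theta+h_H(\gamma(0))
\]
gives $\CA^{(0)}(z_x)=h_H(x)=\CA^{cl}_H(z_x^H)$, so the action of the generator associated to $x$ is exactly $h_H(x)$.

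Next I would invoke the local isomorphism. The analog of Theorem \ref{thm:local-homology} applied inside $T^*N$, to the pair $(L_H,T_q^*N)$ using the engulfable homotopy $\CH=\{H(s)\}$ joining $0$ to $H$, yields
\[
HF^{[id]}_*(L_H,T_q^*N;J_0;V)\cong H_*(\{q\};\Z),
\]
concentrated in degree $0$ and of rank one. Therefore the class $[q]$ is the unique non-zero generator up to scalar, so the mini-max value
\[
f_H^V(q)=\inf_{\alpha\in[q]}\lambda_H(\alpha)
\]
coincides with the unique positive spectral number produced by the local Floer complex. By the standard spectrality argument (applied now to the local complex; since the set of critical values $\{h_H(x): x\in L_H\cap T_q^*N\}$ is finite, the infimum is attained and must lie in this finite set), we conclude $f_H^V(q)=h_H(x_q)$ for some $x_q\in L_H\cap T_q^*N$, with $h_H(x_q)=\CA^{cl}_H(z_{x_q}^H)$.

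The step requiring most care, and which I expect to be the main obstacle, is the spectrality for the local complex, because in our $C^0$-approximation setting we do not have the uniform filtration gap that was available in the $C^2$-small case of \cite{oh:imrn}. The resolution is exactly the content of Theorem \ref{thm:thick-thin2} and Theorem \ref{thm:handle2}: thin trajectories (which are the only ones that contribute to $\del_U$) stay in $V$ and their contribution makes $CF^{[id]}(L_H,T_q^*N;V)$ a genuine finitely generated chain complex whose homology is given by the local computation above. Since the generating set of this complex is finite and each carries a well-defined action $h_H(x)$, the action spectrum is finite (in particular nowhere dense), and the usual perturbation/continuity argument for spectral invariants forces the mini-max value to be attained at one of these action values. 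This finishes the identification $f_H^V(\pi(x))=h_H(x)$ for some $x\in L_H\cap T_{\pi(x)}^*N$, which is the content of the theorem.
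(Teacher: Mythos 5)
Your proposal is correct and follows essentially the same route as the paper, which simply asserts that the proof is verbatim that of the global basic phase function in \cite{oh:jdg}, \cite{oh:lag-spectral} with the global complex $CF_*(H)$ replaced by the local complex $CF_*^{[id]}(H;V)$: identify the generators of $CF(L_H,T_q^*N;V)$ with the points of $L_H\cap T_q^*N$ of action $h_H(x)=\CA^{cl}_H(z_x^H)$, use the localized rank-one computation to see $[q]\neq 0$, and conclude by finiteness of the action spectrum that the mini-max value is attained at one of these values. Your added remark that the thick-thin decomposition (rather than an area gap) is what makes the local complex well defined is exactly the point the paper emphasizes.
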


Once we have achieved localizations of various entities arising in
Floer complex in the previous subsection, the following equality can be proven
by the same argument used in the proof of Theorem \ref{thm:localrho=globalrho}
using the localized version of Lagrangian spectral invariants and
basic phase function. We omit the details of its proof.

\begin{thm}\label{thm:localfH=globalfH} Let $V \subset T^*N$ be as before. Then
$$
f_H^V = f_H
$$
for any $V$-engulfable $H$.
\end{thm}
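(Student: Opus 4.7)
Fix $q \in N$. The plan is to prove $f_H^V(q) = f_H(q)$ by adapting the proof of Theorem~\ref{thm:localrho=globalrho} from the Lagrangian pair $(L,L)$ to the pair $(o_N, T_q^*N)$. I choose a $V$-engulfable homotopy $\CH = \{H(s)\}$ with $H(0) = 0$ and $H(1) = H$, together with an elongation function $\rho$. The crux is to establish the analogs of the thick-thin dichotomy and the handle sliding lemma for the Cauchy--Riemann equation with boundary on $(L_{H(\rho(\tau))}, T_q^*N)$, and then deduce that the local Floer-Piunikhin chain map from $C_*(\{q\})$ into $CF_*^{[id]}(L_H, T_q^*N; V)$ agrees cycle by cycle with its global counterpart into $CF_*(L_H, T_q^*N)$, so that the two minimax values coincide.

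\textbf{Key steps.} First I would extend Theorems~\ref{thm:thick-thin2} and \ref{thm:handle2} from the boundary condition $(L,L)$ to $(L_{H(\rho(\tau))}, T_q^*N)$. The argument is verbatim the same: the maximum principle on the $J_0$-convex domain $V$ combined with the monotonicity inequality yields the dichotomy that either $\Image v \subset V$ and $\max_z d(v(z), o_N) \leq \overline d(\phi_H^1, id)$, or $v$ leaves $V$ and $\int v^*\omega \geq C(J_0, V)$. The only new ingredient is that $T_q^*N$ is non-compact, but $T_q^*N \cap V$ is bounded and meets $o_N$ transversally at $q$, so all boundary data driving the maximum principle remain inside $V$. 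Next I invoke the continuation of maximal invariant sets from Theorem~\ref{thm:local-homology}, applied to the path $s \mapsto (J_0, H(s))$: the set $\CS(J_0, (L_{H(s)}, T_q^*N); V)$ is isolated in $V$ for every $s$, so the thin Piunikhin chain map $\Psi_{\CH, (0)}: C_*(\{q\}) \to CF_*^{[id]}(L_H, T_q^*N; V)$ is well-defined and, under the inclusion $CF_*^{[id]}(\cdot; V) \hookrightarrow CF_*(\cdot)$, coincides at the chain level with the global Piunikhin map $\Psi_\CH$.

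\textbf{Main obstacle and conclusion.} The principal subtlety, mirroring what happens in Theorem~\ref{thm:localrho=globalrho} and Corollary~\ref{cor:rhoV10<E-F}, is that the engulfable homotopy $\CH$ provides no \emph{a priori} control over the action filtration: one cannot substitute the linear homotopy $s \mapsto sH$, since it need not be $V$-engulfable. This is circumvented by the observation that, once every maximal invariant set $\CS_s$ stays inside $V$, the local and global complexes agree on the canonical equivalence class $\Crit^{[id]} \CA_H$ and the differentials coincide on thin generators. Consequently the representatives $(\Psi_\CH)_\#([q])$ and $(\Psi_{\CH,(0)})_\#([q])$ are identical, so the minimax values
\begin{equation*}
f_H(q) = \inf_{\alpha \in (\Psi_\CH)_*[q]} \lambda_H(\alpha), \qquad f_H^V(q) = \inf_{\alpha \in (\Psi_{\CH,(0)})_*[q]} \lambda_H(\alpha)
\end{equation*}
are taken over the same family of chains and must agree, giving $f_H^V(q) = f_H(q)$ for every $q \in N$.
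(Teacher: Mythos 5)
Your overall structure is the same as the paper's: the paper simply says that the equality ``can be proven by the same argument used in the proof of Theorem~\ref{thm:localrho=globalrho}'' and omits the details, and you implement exactly that -- build the thick-thin dichotomy and handle sliding lemma for the moving pair $(L_{H(\rho(\tau))},T_q^*N)$, identify the local and global Piunikhin chain maps through the isolation/continuation argument, and compare the minimax values.

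One step you gloss over is handled differently (and explicitly) in the paper. You write that the thick-thin argument is ``verbatim the same'' and that ``$T_q^*N\cap V$ is bounded and meets $o_N$ transversally at $q$, so all boundary data driving the maximum principle remain inside $V$.'' That is not quite the right reason, and as stated it is circular: the boundary arc of a trajectory lying on $T_q^*N$ is not a priori confined near $o_N$, so the weak maximum principle (max of $|p|^2\circ v$ occurs on $\del\Sigma$) alone does not give the ``very thin'' conclusion or even the confinement inside $V$. What the paper actually invokes for the fiber boundary condition -- see its proof of the thick-thin result for the triangle product in subsection~\ref{subsec:triangle} -- is the \emph{strong} maximum principle along the Legendrian $T_q^*N\cap S^\delta(T^*N)$: because $T_q^*N$ is a conical Lagrangian tangent to the Liouville vector field, the normal derivative of $|p|^2\circ v$ vanishes along $v^{-1}(T_q^*N)$, and the Hopf boundary lemma then forbids an interior boundary maximum there, pushing the maximum to the asymptotic chords in $L_H\cap T_q^*N$ whose heights are $\leq\delta$. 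This is the extra ingredient (explicitly credited to \cite{EHS},~\cite{oh:jdg}) that your sketch should include; otherwise your proposal matches the paper's argument.
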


An immediate corollary of this theorem is the following inequality.

\begin{cor}\label{cor:rhoVH>fHV}
For any Hamiltonian $H \in \CH_\delta^{engulf}(T^*N)$,
\be\label{eq:fVH<rhoVH}
\max f_H^V \leq E^-(H).
\ee
Furthermore if $H, \, H' \in \CH_\delta^{engulf}(T^*N)$
\be\label{eq:fH}
\|f_H^V - f_{H'}^V\|_\infty \leq \|H - H'\|.
\ee
\end{cor}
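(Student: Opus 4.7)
The plan is to reduce both inequalities to their global counterparts by invoking Theorem \ref{thm:localfH=globalfH} as a bridge. Since $H \in \CH_\delta^{engulf}(T^*N)$, that theorem gives the pointwise identity $f_H^V = f_H$ on $N$, so it suffices to prove the stated inequalities for the global basic phase function $f_H$. Both of those are already part of the spectral-invariant package developed in \cite{oh:jdg,oh:alan,oh:lag-spectral}, so essentially everything reduces to applying the identification.

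For the first inequality $\max f_H^V \leq E^-(H)$, I would invoke the known global bound $\max f_H \leq E^-(H)$. That bound is produced by exhibiting a concrete cycle in $CF(H;o_N,T_q^*N)$ representing the point class $[q]$: one takes the cycle $(\Psi^{lin}_{H})_{\#}([q])$ constructed from the Floer--Piunikhin chain map associated to the \emph{linear} homotopy $s\mapsto sH$, and computes the action change along this homotopy explicitly. The standard calculation (as in \cite{oh:jdg,oh:alan}) yields $f_H(q) \leq E^-(H)$ for every $q \in N$, and then $\max f_H^V = \max f_H \leq E^-(H)$.

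For the Lipschitz estimate \eqref{eq:fH}, I would invoke the analogous global estimate $\|f_H - f_{H'}\|_\infty \leq \|H - H'\|$, which is obtained by comparing the chain maps between $CF(H;o_N,T_q^*N)$ and $CF(H';o_N,T_q^*N)$ induced by an interpolating homotopy and estimating the action change in terms of the Hofer-type norm of $H - H'$. Combining with $f_H^V = f_H$ and $f_{H'}^V = f_{H'}$ from Theorem \ref{thm:localfH=globalfH} delivers \eqref{eq:fH} immediately.

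The main obstacle (and the reason Theorem \ref{thm:localfH=globalfH} is indispensable here) is exactly the point emphasized in Remark \ref{rem:linearhomotopy}: the linear path $s\mapsto sH$ need not lie in $\CH_\delta^{engulf}(T^*N;V)$ even when $H$ itself does, so one cannot directly run the linear-homotopy action computation inside the local complex $CF(L_H,T_q^*N;V)$. What makes the argument work is that Theorem \ref{thm:localfH=globalfH} allows us to extract the action bound using the linear homotopy on the \emph{global} complex and then transport the conclusion back to the local invariant. The content of this final corollary is therefore not a new estimate but the observation that, thanks to the local-equals-global identification already established, the standard global bounds pass to the local basic phase function for all $V$-engulfable Hamiltonians.
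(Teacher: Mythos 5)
Your proof is correct and takes essentially the same approach the paper intends: the paper states the corollary as an ``immediate'' consequence of Theorem \ref{thm:localfH=globalfH}, and your reduction via $f_H^V = f_H$ to the standard global bounds, using the linear homotopy on the \emph{global} Floer complex precisely because it need not be $V$-engulfable, mirrors the paper's explicit treatment of the analogous Corollary \ref{cor:rhoV10<E-F} and its Remark \ref{rem:linearhomotopy}.
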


\subsection{Localization of triangle product}
\label{subsec:triangle}

A version of localization of triangle product was previously
exploited in \cite{seidel:triangle,spaeth,oh:seidel} for smooth Hamiltonians.

Instead of delving into the localization of triangle product in full generality,
we will restrict ourselves to the case of the zero section $o_L$
in the cotangent bundle. Once we isolate the invariant set into a Darboux neighborhood $U
\subset M$, we may identify $U$ with a neighborhood $V$ of the zero
section $o_L \subset T^*L$ and consider a Hamiltonian $F$ with
$\supp F \subset V$. It then follows that due to the non-presence
of bubbling effect for the pair $(T^*L, o_L)$, by an easier
argument, we obtain the decomposition $\del = \del_{(0)} + \del'$ of the
Floer differential $\del$ on $CF_*(F;T^*L)$, and obtain the local
Floer complex
$$
\left(CF_*^{[id]}(o_L,F;V), \del_{(0)}\right).
$$
We first recall the definition of the triangle product described in
\cite{oh:cag}, \cite{foh:ajm} and the discussion carried out in
section 8 \cite{oh:lag-spectral}. Similar idea of localizing the triangle
product was used in \cite{seidel:triangle}, \cite{oh:seidel} and \cite{spaeth}.
Instead of delving into the localization in full generality, we restrict ourselves
to the case relevant to our main interest arising from the study in \cite{oh:lag-spectral}.

Let $q \in N$ be given. Consider the Hamiltonians $H: [0,1] \times
T^*N \to \R$ such that $L_H$ intersects transversely both $o_N$ and
$T_q^*N$. We consider the Floer complexes
$$
CF(L_H,o_N), \quad CF(o_N,T_q^*N), \quad CF(L_H,T_q^*N)
$$
each of which carries filtration induced from the effective action
function given below. We denote by
$\frak v(\alpha)$ the level of the chain $\alpha$ in any of these
complexes.

More precisely, $CF(L_H,o_N)$ is filtered by the effective
functional
$$
\CA^{(1)}(\gamma):= \int \gamma^*\theta + h_H(\gamma(0)),
$$
$CF^{\mu}(o_N,T_q^*N)$ by
$$
\CA^{(2)}(\gamma) := \int \gamma^*\theta,
$$
and $CF(L_H,T_q^*N)$ by
$$
\CA^{(0)}(\gamma):= \int \gamma^*\theta + h_H(\gamma(0))
$$
respectively. We recall the readers that $h_H$ is the potential of
$L_H$ and the zero function the potentials of $o_N, \, T_q^*N$.

We now consider the triangle product in the chain level, which we
denote by
\be\label{eq:m2chain} \frak m_2: CF(L_H,o_N) \otimes
CF(o_N,T_q^*N) \to CF(L_H,T_q^*N)
\ee
ollowing the general notation
from \cite{fooo:book}. This product is
defined by considering all triples
$$
x_1 \in L_H \cap o_N, \, x_2 \in o_N \cap T_q^*N, \, x_0 \in L_H
\cap T_q^*N
$$
with the polygonal Maslov index $\mu(x_1,x_2;x_0)$ whose associated
analytical index, or the virtual dimension of the moduli space
$$
\CM_{3}(D^2; x_1,x_2; x_0): = \widetilde
\CM_3(D^2;x_1,x_2;x_0)/PSL(2,\R)
$$
of $J$-holomorphic triangles, becomes zero and counting the number
of elements thereof.

\begin{defn}\label{defn:CM3} Let $J = J(z)$ be a domain-dependent family of
compatible almost complex structures with $z \in D^2$. We define the
space $\widetilde \CM_3(D^2;x_1,x_2;x_0)$ by the pairs
$(w,(z_0,z_1,z_2))$ that satisfy the following:
\begin{enumerate}
\item $w: D^2 \to T^*N$ is a continuous map satisfying $\delbar_J w = 0$ on
$D^2 \setminus \{z_0,z_1,z_2\}$,
\item the marked points $\{z_0,z_1,z_2\} \subset \del D^2$ with counter-clockwise
cyclic order,
\item $w(z_1) = x_1, \, w(z_2)=x_2$ and $w(z_0) = x_0$,
\item the map $w$ satisfies the Lagrangian boundary condition
$$
w(\del_1 D^2) \subset L_H, \, w(\del_2 D^2) \subset o_N, \, w(\del_3
D^2) \subset T_q^*N
$$
where $\del_i D^2 \subset \del D^2$ is the are segment in between
$x_i$ and $x_{i+1}$ ($i \mod 3$).
\end{enumerate}
\end{defn}

We have the following energy estimate

\begin{prop}[Proposition 8.2 \cite{oh:lag-spectral}] Suppose $w:D^2 \to T^*N$ be any smooth map with finite energy
that satisfy all the conditions given in \ref{defn:CM3}, but not
necessarily $J$-holomorphic. We denote by $c_x:[0,1] \to T^*N$ the
constant path with its value $x \in T^*N$. Then we have
\be\label{eq:area} \int w^*\omega_0 = \CA^{(1)}(c_{x_1}) +
\CA^{(2)}(c_{x_2}) - \CA^{(0)}(c_{x_0}) \ee
\end{prop}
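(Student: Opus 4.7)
The plan is to prove the identity by a single application of Stokes' theorem, then reduce each of the three resulting boundary integrals using the exactness properties of the three Lagrangian boundaries. Since $\omega_0$ is exact on $T^*N$, writing $\omega_0 = \pm d\theta$ (with the sign convention used in the body of the paper) gives
$$
\int_{D^2} w^*\omega_0 \;=\; \pm \int_{\partial D^2} w^*\theta.
$$
Although $w$ is only smooth on $D^2 \setminus \{z_0,z_1,z_2\}$, its continuity at the three vertices together with the finite-energy hypothesis lets one justify Stokes by removing small half-discs around each $z_j$ and letting their radii shrink; the contributions along the vanishing arcs disappear because $w^*\theta$ stays bounded while the arclength tends to zero.

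Next I would decompose $\partial D^2$ into the three prescribed arcs $\partial_i D^2$ and evaluate $\int w^*\theta$ on each separately. Two of them are immediately zero: on the arc mapped into $o_N$ the Liouville form $\theta = p\,dq$ vanishes identically because $p=0$ on the zero section, and on the arc mapped into $T_q^*N$ it vanishes because $dq = 0$ along a cotangent fibre. The only surviving contribution comes from the arc mapped into $L_H$. On $L_H = \phi_H^1(o_N)$ the Liouville form restricts to the exact one-form $dh_H$, where $h_H$ is the canonical potential of the Hamiltonian deformation of the zero section. Hence this last boundary integral telescopes to a difference $h_H(\text{terminal endpoint}) - h_H(\text{initial endpoint})$; the endpoints of the $L_H$-arc are precisely the two vertices lying on $L_H$, namely $x_0$ and $x_1$.

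It then remains to match this boundary integral with the right-hand side of the statement. For constant paths one has $\int c_{x_j}^*\theta = 0$, so directly from the definitions
$$
\CA^{(1)}(c_{x_1}) = h_H(x_1), \qquad \CA^{(0)}(c_{x_0}) = h_H(x_0), \qquad \CA^{(2)}(c_{x_2}) = 0,
$$
and the sum on the right of the claimed identity collapses to $h_H(x_1) - h_H(x_0)$, which is exactly what Stokes' theorem produced, once all signs are aligned.

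The main (and essentially only) obstacle is the bookkeeping of signs and orientations: the choice of sign in $\omega_0 = \pm d\theta$ dictated by the paper's convention; the induced orientation of each $\partial_i D^2$ from the counter-clockwise orientation of $\partial D^2$; the cyclic identification of the index $i$ with the vertex pair $(x_i, x_{i+1})$ so that the Lagrangian $L_H$ ends up on the arc whose endpoints both lie in $L_H$; and the sign convention in $\theta|_{L_H} = \pm dh_H$. Once these are fixed the whole argument is a one-line application of Stokes, and no analytic estimate beyond the removable-singularity/continuity argument at the three vertices is needed.
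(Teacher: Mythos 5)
Your argument --- Stokes' theorem reducing $\int_{D^2} w^*\omega_0$ to $\pm\int_{\partial D^2} w^*\theta$ (justified at the three corners by continuity plus finite energy), the vanishing of $\theta = p\,dq$ on the arcs mapped to $o_N$ and to $T_q^*N$, and the telescoping of $\theta|_{L_H} = \pm\,dh_H$ along the remaining arc --- is exactly the intended proof, and you correctly identified the endpoints of the $L_H$-arc as $x_0$ and $x_1$ even though the arc-labeling printed in Definition~\ref{defn:CM3} is internally inconsistent with the incidence conditions $x_1 \in L_H\cap o_N$, $x_2 \in o_N\cap T_q^*N$, $x_0 \in L_H\cap T_q^*N$, so you have implicitly made the unique consistent correction. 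The only remaining work, which you rightly flag, is aligning the paper's sign conventions for $\omega_0=\pm d\theta$, for $\theta|_{L_H}=\pm dh_H$, and for the boundary orientation of $\partial D^2$ so that the telescoped boundary term comes out as $h_H(x_1)-h_H(x_0)$ rather than its negative.
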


An immediate corollary of this proposition from the definition of
$\frak m_2$ is that the map \eqref{eq:m2chain} restricts to
$$
\frak m_2: CF^{\lambda}(L_H,o_N) \otimes CF^{\mu}(o_N,T_q^*N) \to
CF^{\lambda + \mu}(L_H,T_q^*N)
$$
and in turn induces the product map
\be\label{eq:m2hom}
*_F: HF^{\lambda}(L_H,o_N) \otimes HF^{\mu}(o_N,T_q^*N) \to HF^{\lambda+\mu}(L_H,T_q^*N)
\ee
in homology. This is because if $w$ is $J$-holomorphic $\int
w^*\omega \geq 0$. This ends the summary of
triangle product on the global Floer complex explained in
\cite{oh:lag-spectral}.

To localize the above construction to obtain the local analogs
$$
\frak m_{2,(0)}: CF^{\lambda}(L_H,o_N;V) \otimes CF^{\mu}(o_N,T_q^*N;V) \to
CF^{\lambda + \mu}(L_H,T_q^*N;V)
$$
and the induced product
\be\label{eq:m2hom}
*_{F,(0)}: HF^{\lambda}(L_H,o_N;V) \otimes HF^{\mu}(o_N,T_q^*N;V) \to HF^{\lambda+\mu}(L_H,T_q^*N;V)
\ee
in homology, we have only to prove the analog to Theorem \ref{thm:thick-thin2}
and Theorem \ref{thm:handle2} for the moduli space
$$
\widetilde \CM_3(D^2;x_1,x_2;x_0).
$$
\begin{thm} Let $V$ be an open neighborhood of the zero section $o_L$ and
let $H \in \CH^{engulf}_\delta(T^*L)$. Then for any given open neighborhood $V$ of
$o_L$, there exists some $\delta_0 > 0$ such that
for any $0 < \delta \leq \delta_0$, for any element
$w \in \widetilde \CM_3(L_H, o_N, T_q^*N)$
the following alternative holds:
\begin{enumerate}
\item $\Image w \subset V$ and $\max_{z \in \R \times \{0,1\}}d(v(z), o_L) \leq \delta$,
\item $\Image w \not \subset V$ and $\int w^*\omega \geq C(J_0,V)$.
\end{enumerate}
\end{thm}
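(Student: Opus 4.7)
The plan is to follow the same template as the proofs of Theorem \ref{thm:thick-thin2} and Theorem \ref{thm:handle2}, using two ingredients only: the monotonicity formula for $J_0$-holomorphic maps on the $J_0$-convex domain $V$, and the maximum principle applied to a $J_0$-plurisubharmonic defining function of $\partial V$. The difference from the strip case is bookkeeping: the domain is a disk with three marked boundary punctures $(z_0,z_1,z_2)$, and the three boundary arcs $w(\del_i D^2)$ are sent to $L_H$, $o_N$, and $T_q^*N$ respectively.

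First I would fix $\delta_0=\delta_0(V,J_0)$ small enough that for every $H\in \CH_\delta^{engulf}(T^*L)$ with $\delta\le \delta_0$ one has $L_H=\phi_H^1(o_N)\subset V$ and $\overline d(L_H,o_L)\le \delta$, and moreover the three corner intersection points $x_1\in L_H\cap o_N$, $x_2\in o_N\cap T_q^*N=\{q\}$, and $x_0\in L_H\cap T_q^*N$ all lie in a fixed compact subset $K\Subset V$ independent of $H$. In particular the Lagrangian boundary data $L_H\cup o_N\cup(T_q^*N\cap V)$ is a fixed compact subset of $\overline V$ away from $\del V$.

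Next I would prove alternative (2). Suppose $\Image w\not\subset V$, so there exists $z_*\in D^2$ with $w(z_*)\notin\overline V$. Let $\psi$ denote a $J_0$-plurisubharmonic defining function of $V$, i.e., $V=\{\psi<0\}$, $d\psi\ne 0$ on $\del V$, and $-dd^{J_0}\psi\ge 0$. Since $L_H\subset V$, $o_N\subset V$ and the third boundary arc $w(\del_3 D^2)\subset T_q^*N$ starts and ends at the corners $x_2,x_0$, which lie in the compact set $K\subset V$, we can invoke the monotonicity formula for $J_0$-holomorphic curves exactly as in the proof of Theorem \ref{thm:thick-thin2}. Indeed the distance from $w(z_*)$ to the compact set $L_H\cup o_N\cup K$ is at least $\dist(\del V,L_H\cup o_N\cup K)$, a positive constant depending only on $V$ (once $\delta\le\delta_0$ is fixed). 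Arguing on the connected component of $w^{-1}(T^*N\setminus \overline V)$ containing $z_*$, monotonicity yields
\[
\int w^*\omega \;\ge\; C'\cdot \bigl(\dist(\del V,\,L_H\cup o_N\cup K)\bigr)^{2} \;=:\; C(J_0,V)>0,
\]
where $C'$ is the monotonicity constant of $(T^*N,\omega_0,J_0)$; this is the desired lower bound on the area.

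Finally for alternative (1), if $\Image w\subset V$ one argues in parallel with the proof of Theorem \ref{thm:thick-thin2}: the subharmonic function $\psi\circ w$ attains its maximum on $\del D^2$, and the three boundary arcs of $w$ lie in $L_H\cup o_N\cup (T_q^*N\cap V)\subset V$, so $\psi\le 0$ on them; combined with the distance bound $\overline d(L_H,o_L)\le \delta$ along $\del_1 D^2$, the vanishing of the distance along $\del_2 D^2\subset o_N$, and the standard $J_0$-convex maximum principle estimate along $\del_3 D^2\subset T_q^*N\cap V$, one derives the stated $\delta$-control on the boundary image near $o_L$. The only step I expect to require care is the third boundary arc: because $T_q^*N$ is non-compact and exits $V$, one must verify that the \emph{arc} $w(\del_3 D^2)\subset T_q^*N$ joining $x_2$ to $x_0$ cannot escape $V$ while the image of $w$ stays in $V$, which follows by applying the maximum principle a second time to a defining function of $V\cap T_q^*N$ inside the complex line generated by the $J_0$-direction transverse to $T_q^*N$; this is the analogue, in the triangle setting, of the corresponding estimate used in Theorem \ref{thm:thick-thin2}.
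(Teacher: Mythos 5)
Your overall template — monotonicity for the thick alternative, maximum principle for the thin alternative — is the same as the paper's, and your identification of the third boundary arc on $T_q^*N$ as the new difficulty is exactly the right thing to worry about. However, the mechanism you propose for that arc is not correct. You write that the arc's confinement ``follows by applying the maximum principle a second time to a defining function of $V\cap T_q^*N$ inside the complex line generated by the $J_0$-direction transverse to $T_q^*N$.'' A $J_0$-holomorphic triangle $w$ does not take values in any complex line, and restricting a defining function to $V\cap T_q^*N$ (a real one-dimensional slice) and invoking a ``maximum principle'' there is not a well-posed step: there is no second holomorphic map to which such a principle would apply. As stated, this part of the argument has a genuine gap.

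The tool the paper actually uses is the \emph{strong} maximum principle (Hopf boundary point lemma) applied to the subharmonic function $\psi\circ w$ at points of the arc $\partial_3 D^2$, together with the geometric fact that $T_q^*N\cap S^\delta(T^*N)$ is \emph{Legendrian} in the contact-type hypersurface $S^\delta(T^*N)=\partial V$. Legendrianity of $T_q^*N\cap\partial V$ is precisely what lets the Hopf lemma produce a contradiction if the maximum of $\psi\circ w$ were attained at an interior point of $\partial_3 D^2$: the totally real boundary condition in $T_q^*N$ forces the tangential derivative of $\psi\circ w$ along the arc to vanish there, while Legendrianity rules out the degenerate tangency that would otherwise be allowed. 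This is the $C^0$-estimate of the type established in \cite{EHS} and \cite{oh:jdg}, and it is the single genuinely new ingredient relative to Theorems \ref{thm:Sigma-d} and \ref{thm:handle2}. Your proof should replace the ``complex line'' step with this strong maximum principle argument along the Legendrian fiber boundary.
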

\begin{proof} The only difference in the proof of this theorem from
Theorem \ref{thm:thick-thin2} and \ref{thm:handle2} is that we also need to use
the strong maximum principle along the fiber Lagrangian $T_q^*N$ in addition.
We would like to note that the intersection
$$
T_q^*N \cap S^\delta(T^*N)
$$
is Legendrian and so a $J_g$-holomorphic curve satisfies strong
maximum principle along $T_q^*N$. We refer to \cite{EHS}, \cite{oh:jdg}
for such an application of strong maximum principle to obtain $C^0$-estimate.
\end{proof}

The proof is exactly the same as that of Theorem \ref{thm:thick-thin2}
and Theorem \ref{thm:handle2} and so omitted.

We define the `thin' part of
$\frak m_2$ by counting those elements $w$ from $\widetilde \CM_3(L_H, o_N, T_q^*N)$
of the type (1) above and decompose
$$
\frak m_2 = \frak m_{2,(0)} + \frak m_2'.
$$
It also follows that $\frak m_{2,(0)}$ induces a product map
$$
\frak m_{2,(0)}:CF^{\lambda}(L_H,o_N;V) \otimes CF^{\mu}(o_N,T_q^*N;V) \to
CF^{\lambda + \mu}(L_H,T_q^*N;V).
$$
It is straightforward to check that this map satisfies
$$
\del_{(0)}(\frak m_{2,(0)}(x,y)) = \frak m_{2,(0)}(\del_{(0)}(x),y) \pm \frak m_{2,(0)}(x,\del_{(0)}(y))
$$
and so induces a product
\be\label{eq:m2hom}
*_{F,(0)}: HF_*^\lambda(L_H,o_N;V) \otimes HF_*^\mu(o_N,T_q^*N;V)
\to HF_*^{\lambda+\mu}(L_H,T_q^*N;V))
\ee
as in \cite{oh:lag-spectral}.

\section{Appendix: Local Floer complex of engulfable
Hamiltonian $C^0$-approximate loop}

In this appendix, we give the construction of local Hamiltonian Floer complex in the
context of $C^0$-small topological Hamitonian loops for a future purpose.
Exposition of this appendix closely follows that of section 4 \cite{oh:ajm1}
except that we need to explain the points, if necessary, about why $C^0$-smallness of
$\phi_F$ is enough to localize the Floer complex of the fixed point set of
$\phi_F^1$.

\subsection{Hamiltonian Floer complex}
\label{subsec:pert-action}

This section reviews the standard construction in Hamiltonian Floer theory.
We closely follow exposition of chapter 2 \cite{fooo:spectral} for some
enhancement added which is useful for our purpose later.

Let $\widetilde{\CL}_0(M)$ be the set of all the pairs $[\gamma,w]$
where $\gamma$ is a loop $\gamma: S^1 \to M$ and $w: D^2 \to M$ a
disc with $ w\vert_{\del D^2} = \gamma$. We identify $[\gamma,w]$
and $[\gamma',w']$ if $\gamma = \gamma'$ and  $w$ is homotopic to
$w'$ relative to the boundary $\gamma$. When a one-periodic
Hamiltonian $H:(\R/\Z) \times M \to \R$ is given, we consider the
perturbed functional $\CA_H: \widetilde \CL_0(M) \to \R$ defined by
\be\label{eq:AAH}
\CA_H([\gamma,w]) = -\int w^*\omega - \int
H(t,\gamma(t))dt.
\ee

For a Hamiltonian $H:[0,1] \times M \to \R$,
we denote its flow, a Hamiltonian isotopy, by $\phi_H: t\mapsto
\phi_H^t \in \text{\rm Ham}(M,\omega)$. We
denote the time-one map by $\phi_H^1$. We put
$$
\Fix \phi_H^1 = \{ p \in M \mid \phi^1_H(p) = p\}.
$$

Each element $p\in \mbox{\rm Per}(H)$, the set of $1$-periodic orbits, induces a map
$
z_x=z_x^H : S^1 \to M,
$
by the correspondence
\be\label{eq:Fix-Per}
z^H_x(t) = \phi_H^t(\phi_H^{-1}(x)),
\ee
where $t \in \R/\Z \cong S^1$.

We denote by $\mbox{\rm Per}(H)$ the set of one-periodic solutions of $\dot x = X_H(t,x)$.
Then \eqref{eq:Fix-Per} provides a one-one correspondence between $\operatorname{Fix}\phi_H^1$ and
$\operatorname{Per}(H)$.
The set of critical points of $\CA_H$ is given by
$$
\mbox{\rm Crit}(\CA_H) = \{ [z,w] \mid \gamma \in \mbox{\rm Per}(H), \, w\vert_{\del D^2}  = \gamma \}.
$$

We consider the universal (downward) Novikov field
$$
\Lambda = \left\{\sum_{i=1}^{\infty} a_i T^{\lambda_i} \Big|\, a_i \in \R, \, \lambda_i \to -\infty\right\}
$$
and define a valuation $\frak v_T$ on $\Lambda$ by
\begin{equation}\label{vqdef}
 \frak v_T\left(\sum_{i=1}^{\infty} a_i T^{\lambda_i}\right)
 = \sup \{ \lambda_i \mid a_i \ne 0 \}.
\end{equation}
It satisfies the following properties:
\begin{enumerate}
\item $ \frak v_T(xy) =  \frak v_T(x) +  \frak v_T(y)$,
\item $ \frak v_T(x+y) \le \max\{ \frak v_T(x),  \frak v_T(y)\}$,
\item  $\frak v_T(x) = - \infty$ if and only if $x =0$,
\item $\frak v_T(q) = 1$,
\item $\frak v_T(ax) = \frak v_T(x)$ if $a \in R \setminus \{0\}$.
\end{enumerate}

We consider the
$\Lambda$ vector space
$\widehat{CF}(H;\Lambda)$ with basis given by the critical point set $\mbox{Crit}(\CA_H)$ of $\CA_H$.
\begin{defn}\label{Lambdahatonashi}
We define an equivalence relation $\sim$ on $\widehat{CF}(H;\Lambda)$ so that
$[z,w] \sim T^c[z',w']$ if and only if
\be\label{eq:equiv-rel}
z= z', \,\, \int_{D^2} w^{\prime *}\omega = \int _{D^2} w^*\omega - c.
\ee
\par
The quotient of $\widehat{CF}(H;\Lambda)$ modded out by this equivalence relation $\sim$
is called the Floer complex of the periodic Hamiltonian $H$ and denoted by
${CF}(H;\Lambda)$.
\end{defn}

Here we do not assume the condition on the Conley-Zehnder indices and work with $\Z_2$-grading.
In the standard literature on Hamiltonian Floer homology, an additional requirement
$$
c_1(\overline w \# w') = 0
$$
is commonly imposed in the definition Floer complex, denoted by $CF(H)$. For the purpose of
the current paper similarly as in \cite{fooo:spectral}, the equivalence relation \eqref{eq:equiv-rel}
is enough and more favorable in that it makes the associated Novikov ring becomes a field.
To differentiate the current definition from $CF_*(H)$, we denote the complex
used in the present paper by ${CF}_*(H;\Lambda)$.

\begin{lem}
As a $\Lambda$ vector space, ${CF}_*(H;\Lambda)$ is isomorphic to
the direct sum $\Lambda^{\# \mbox{\rm Per}(H)}$.
\par
Moreover the following holds: We fix a lifting $[z,w_z]
\in \mbox{\rm Crit}(\CA_H)$ for each
$z \in \mbox{\rm Per}(H)$. Then any element $x$ of ${CF}(M,H;\Lambda)$
is uniquely written as a sum
\begin{equation}\label{xexpand}
x = \sum_{z \in \mbox{\rm Per}(H)} x_z [z,w_z], \quad
\text{with $x_z \in \Lambda$}.
\end{equation}
\end{lem}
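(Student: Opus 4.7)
The plan is to exhibit an explicit $\Lambda$-linear isomorphism
$$
\Phi : \Lambda^{\#\mbox{Per}(H)} \longrightarrow CF_*(H;\Lambda), \qquad
(x_z)_{z} \longmapsto \sum_{z\in\mbox{Per}(H)} x_z\,[z,w_z],
$$
using the fixed choice of lifts $[z,w_z]$. First I would observe that the equivalence relation of Definition 7.2 identifies $[z,w]$ with $T^c[z',w']$ only when $z=z'$, so the projection $[z,w]\mapsto z$ survives to the quotient and gives a direct sum decomposition $CF_*(H;\Lambda)=\bigoplus_{z\in\mbox{Per}(H)} CF_z(H;\Lambda)$, where $CF_z$ is the $\Lambda$-span of equivalence classes of pairs over $z$. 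It then suffices to show that each $CF_z$ is a free $\Lambda$-module of rank one generated by $[z,w_z]$.

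Surjectivity onto $CF_z$, and thus the existence of the expansion \eqref{xexpand}, is immediate: for any other capping $w$ of the loop $z$, setting $c(w) := \int w_z^*\omega - \int w^*\omega$ and applying Definition 7.2 directly gives the identity
$$
[z,w] \;=\; T^{c(w)}\,[z,w_z] \quad \text{in } CF_*(H;\Lambda),
$$
so every basis element of $\widehat{CF}(H;\Lambda)$ lying over $z$ is already a monomial $\Lambda$-multiple of the distinguished generator, and arbitrary $\Lambda$-combinations collapse to $x_z[z,w_z]$ for some $x_z\in\Lambda$.

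Injectivity of $\Phi$ is the one place where care is needed, since a priori the passage to the quotient could introduce $\Lambda$-torsion on $[z,w_z]$. The plan is to use the action functional $\CA_H$ as a bookkeeping device: under the relation $[z,w] \sim T^c[z,w']$ with $\int w'^*\omega = \int w^*\omega - c$, one checks directly from \eqref{eq:AAH} that $\CA_H([z,w']) = \CA_H([z,w]) + c$, so the scaling by $T^c$ in $\Lambda$ is synchronized with the shift of action by $c$. Writing a hypothetical nonzero $\lambda = \sum_i a_i T^{\lambda_i}$ with $a_i\neq 0$ and $\lambda_i\to-\infty$, the element $\lambda\,[z,w_z]$ unfolds in $\widehat{CF}$ into a $\Lambda$-combination of basis elements supported on the distinct action values $\CA_H([z,w_z])+\lambda_i$; these values are pairwise distinct, so no further application of the relation $\sim$ can cancel them. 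Thus $\lambda[z,w_z]\sim 0$ forces $\lambda=0$. The main obstacle is simply this bookkeeping step — verifying that the $\R$-torsor structure on cappings of $z$ matches the $T^\bullet$-scaling in $\Lambda$ in an action-compatible way — after which the lemma reduces to the statement that, fiberwise over each $z\in\mbox{Per}(H)$, the $\Lambda$-action on the generator is free, so $CF_z \cong \Lambda$ and the isomorphism $\Phi$ follows.
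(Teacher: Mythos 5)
The paper states this lemma without supplying a proof, so I am judging your argument on its own terms. Your overall strategy is the right one: decompose $CF(H;\Lambda)$ over $z\in\mbox{\rm Per}(H)$ and show each summand is free of rank one over $\Lambda$ with generator $[z,w_z]$. The spanning half of your argument is correct, up to a sign: with your $c(w):=\int w_z^*\omega-\int w^*\omega$, the relation \eqref{eq:equiv-rel} actually yields $[z,w]=T^{-c(w)}[z,w_z]$, not $T^{c(w)}[z,w_z]$, though this does not affect surjectivity.

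The injectivity step has a genuine gap. The element $\lambda[z,w_z]$ with $\lambda=\sum_i a_i T^{\lambda_i}$ does not ``unfold'' in $\widehat{CF}(H;\Lambda)$ into a $\Lambda$-combination of distinct basis vectors: each $a_iT^{\lambda_i}[z,w_z]$ is a $\Lambda$-scalar multiple of the \emph{same} basis vector $[z,w_z]$, and to replace $T^{\lambda_i}[z,w_z]$ by a bona fide basis element $[z,w_i]$ you would need a capping $w_i$ of $z$ with $\int w_i^*\omega=\int w_z^*\omega-\lambda_i$, which for an arbitrary real number $\lambda_i$ need not exist (the areas of cappings of a fixed $z$ fill out only a single coset of $\omega(\pi_2(M))$). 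So the ``distinct action values'' bookkeeping does not by itself rule out a nontrivial relation. The correct argument is purely module-theoretic and avoids the action filtration entirely: inside $\widehat{CF}_z$, the $\Lambda$-span of all $[z,w]$ capping $z$, the submodule $N_z$ being quotiented is the $\Lambda$-span of the relators $r_{z,w}:=[z,w]-T^{c}[z,w_z]$ with $c=\int w^*\omega-\int w_z^*\omega$ (one checks the cocycle identity so that the general relator $[z,w]-T^{c'}[z,w']$ equals $r_{z,w}-T^{c'}r_{z,w'}$). The set $\{[z,w_z]\}\cup\{r_{z,w}\}_{w\ne w_z}$ is again a $\Lambda$-basis of $\widehat{CF}_z$: it spans because $[z,w]=r_{z,w}+T^{c}[z,w_z]$, and it is independent by comparing coefficients of the original basis vectors. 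Hence $\widehat{CF}_z=\Lambda\cdot[z,w_z]\oplus N_z$, so $CF_z=\widehat{CF}_z/N_z\cong\Lambda$, and summing over $z$ gives both assertions of the lemma.
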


\begin{defn}\label{defn:valuationv}
\begin{enumerate}
\item
Let $x$ be as in $(\ref{xexpand})$. We define
$$
\frak v_T(x) = \max \{ \frak v_T(x_z) + \CA_H([z,w_z]) \mid \gamma \in \mbox{\rm Per}(H) \}.
$$
\item
We define a filtration $F^{\lambda}CF(M,H;\Lambda)$ on $CF(M,H;\Lambda)$ by
$$
F^{\lambda}CF(H;\Lambda)
= \left\{ x \in CF(H;\Lambda) \mid \frak v_T(x) \le \lambda\right\}.
$$
We have
$$
F^{\lambda_1}CF(H;\Lambda)  \subset F^{\lambda_2}CF(H;\Lambda)
$$
if $\lambda_1 < \lambda_2$. We also have
$$
\bigcap_{\lambda} F^{\lambda}CF(H;\Lambda)
=\{0\},
\quad
\bigcup_{\lambda} F^{\lambda}CF(H;\Lambda)
= CF(M;H).
$$
\item
We define a metric $d_T$ on $CF(H;\Lambda)$ by
\be\label{eq:metricdq}
d_T(x,x') = e^{\frak v_T(x-x') }.
\ee
\end{enumerate}
\end{defn}
Then (\ref{vqdef}), (\ref{eq:equiv-rel}) and Definition \ref{defn:valuationv}
imply that
$$
\frak v_T(a\frak x) = \frak v_T(a) + \frak v_T(\frak x)
$$
for $a \in \Lambda^{\downarrow}$,
$\frak x \in CF(H;\Lambda)$.
We also have
$$
T^{\lambda_1}\cdot F^{\lambda_2}CF(H;\Lambda)
\subseteq
F^{\lambda_1+\lambda_2}CF(H;\Lambda).
$$

\begin{lem}\label{lem37}
\begin{enumerate}
\item
$\frak v_T$ is independent of the choice of the lifting $z \mapsto [z,w_z]$.
\item
$CF(H;\Lambda^{\downarrow})$ is complete with respect to the metric $d_T$.
\item
The infinite sum
$$
\sum_{[z,w] \in \Crit \CA_H} x_{[z,w]} [z,w]
$$
converges in $CF(H;\Lambda^{\downarrow})$ with respect to the metric $d_T$ if
$$
\left\{ [z,w] \in \Crit\CA_H \mid \frak v_T(x_{[z,w]}) +   \mathcal A_H([z,w]) > -C,
\,\, x_{[z,w]} \ne 0 \right\}.
$$
is finite for any $C \in \R$.
\end{enumerate}
\end{lem}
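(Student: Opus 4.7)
The proof breaks into three parts, each of which reduces to a direct computation from the definitions of the Novikov field $\Lambda$, the equivalence relation defining $CF(H;\Lambda)$, the action functional $\CA_H$, and the valuation $\frak v_T$.

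For part (1), I will track how each term $x_z[z,w_z]$ in the expansion \eqref{xexpand} transforms under a change of lifting $z\mapsto [z,w_z'] $. The equivalence relation forces $[z,w_z] = T^{c_z}[z,w_z']$ for a unique $c_z \in \R$ determined by $c_z = \int w_z^*\omega - \int w_z'^*\omega$, so the coefficient shifts as $x_z' = T^{c_z}x_z$ and hence $\frak v_T(x_z') = \frak v_T(x_z) + c_z$. A direct substitution into \eqref{eq:AAH} shows that $\CA_H([z,w_z'])$ shifts by $-c_z$ (with the sign convention of the equivalence relation correctly interpreted), so the summand $\frak v_T(x_z) + \CA_H([z,w_z])$ is invariant term-by-term. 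Taking the maximum over the finite set $\mbox{Per}(H)$ yields invariance of $\frak v_T(x)$.

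For part (2), the plan is to exploit finite-dimensionality: $CF(H;\Lambda)$ is a $\Lambda$-vector space of dimension $\#\mbox{Per}(H)$, which is finite for nondegenerate $H$ on the closed manifold $M$. Fix a lifting $z\mapsto[z,w_z]$, and let $\{\frak x^{(n)}\}$ be a $d_T$-Cauchy sequence in $CF(H;\Lambda)$, written as $\frak x^{(n)} = \sum_z x_z^{(n)}[z,w_z]$. Using the definition of $\frak v_T$ as a maximum over the finitely many periodic orbits together with the ultrametric inequality in $\Lambda$, one shows that the global Cauchy condition forces each coordinate sequence $\{x_z^{(n)}\}_{n}$ to be Cauchy in $(\Lambda, e^{\frak v_T})$. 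Completeness of the universal Novikov field $\Lambda$ is standard, so there exist coordinate limits $x_z^{(\infty)}$, and $\frak x^{(\infty)} := \sum_z x_z^{(\infty)}[z,w_z]$ is the desired limit.

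For part (3), the plan is to reduce to part (2) via a Cauchy argument. The hypothesis that for every $C\in\R$ the set of nonzero terms with $\frak v_T(x_{[z,w]}) + \CA_H([z,w]) > -C$ is finite allows us to enumerate the nonzero basis elements $[z,w]$ appearing in the sum in non-increasing order of $\frak v_T(x_{[z,w]}) + \CA_H([z,w])$. Then the difference of any two partial sums taken beyond a sufficiently long initial segment lies in $F^{-C}CF(H;\Lambda)$, so the partial sums form a $d_T$-Cauchy sequence, and convergence follows from part (2). The main obstacle is the careful bookkeeping in part (1), where the sign conventions in the equivalence relation and in $\CA_H$ must be balanced against each other to produce the exact cancellation; once this is handled, parts (2) and (3) are formal consequences of completeness of $\Lambda$ and the ultrametric structure of $d_T$.
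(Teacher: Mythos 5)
The paper states Lemma~\ref{lem37} in the appendix without giving a proof (it is borrowed from the setup of \cite{fooo:spectral}), so there is no in-paper argument to compare against; your strategy---bookkeeping for (1), completeness of $\Lambda$ plus finite rank for (2), Cauchy partial sums for (3)---is the natural one and is essentially right.

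However, part (1) as you have written it does not actually close. With $c_z := \int w_z^*\omega - \int w_z'^*\omega$ and the equivalence relation literally as stated in Definition~\ref{Lambdahatonashi}, namely $[z,w] \sim T^c[z',w']$ iff $\int w'^*\omega = \int w^*\omega - c$, a direct substitution into \eqref{eq:AAH} gives
$$
\CA_H([z,w_z']) = -\int w_z'^{*}\omega - \int H = -\bigl(\int w_z^*\omega - c_z\bigr) - \int H = \CA_H([z,w_z]) + c_z,
$$
so the shift is $+c_z$, not $-c_z$ as you assert. Combined with $\frak v_T(x_z') = \frak v_T(x_z) + c_z$, the summand $\frak v_T(x_z) + \CA_H([z,w_z])$ changes by $2c_z$, which is \emph{not} zero, and the argument breaks. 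What this actually exposes is a sign error in the paper's stated equivalence relation: for $\frak v_T$ to be compatible with $\frak v_T(a\frak x) = \frak v_T(a) + \frak v_T(\frak x)$, the relation must read $\int w'^*\omega = \int w^*\omega + c$. Under that corrected convention, $\CA_H([z,w_z'])$ does shift by $-c_z$ and your cancellation goes through term-by-term. Your parenthetical ``with the sign convention of the equivalence relation correctly interpreted'' is exactly where the gap is hiding: you should state explicitly that the convention needs the opposite sign and verify the cancellation with that sign, rather than asserting the shift you want.

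Parts (2) and (3) are fine in outline. For (2), you should make explicit why coordinate-wise Cauchyness follows: since there are finitely many $z\in\Per(H)$, $\frak v_T$ of a difference is the maximum over $z$ of $\frak v_T$ of the coordinates plus the fixed constants $\CA_H([z,w_z])$, so the global bound controls each coordinate. For (3), two small points worth flagging: (i) you should note that $\Crit\CA_H$ is countable (finitely many orbits, countably many capping classes), so an enumeration exists; and (ii) after grouping the terms $x_{[z,w]}[z,w]$ with the same underlying orbit $z$, the resulting coefficient of $[z,w_z]$ is itself an infinite sum in $\Lambda$, whose convergence follows from the same finiteness hypothesis---this is implicit in your Cauchy argument via the ultrametric inequality, but it would be cleaner to say so.
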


\subsection{Isolating local Hamiltonian Floer complex}

This section is a modification of section 4.1 \cite{oh:ajm1} which
treats the case of $C^2$-small perturbation of Hamiltonians $H$
following  section 3 \cite{oh:imrn}.

As in section \ref{sec:local},
we will replace the condition of $\phi_F$ being $C^1$-small by $\phi_F$ being $C^0$-small with the
same kind of bound on the Hofer norm $\|F\|$.
Once we have established the thick-thin decomposition
given in Theorem \ref{thm:thick-thin2}, we can safely repeat the arguments
laid out in section 4.1 \cite{oh:ajm1}, whose summary is now in order.

For given such $F$, we consider the subset $\CU = \CU(U_\Delta) \subset \CL_0(M)$
of loops given by
$$
\CU = \{\gamma \in \CL_0(M) \mid (\gamma(t),\gamma(0)) \in U_\Delta\}.
$$
for a fixed Darboux neighborhood $U_\Delta$ of the diagonal $\Delta \subset M\times M$
for all $t \in [0,1]$. In particular, any periodic orbit $z$ of the flow
$\phi_H$ is contained in $\CU \subset \CL(M)$ has a canonical isotopy
class of contraction $w_z$. We will always use this convention
$w_z$ whenever there is a canonical contraction of $z$ like in
this case of small loops. This provides a canonical embedding of
$\CU \subset \widetilde \CL_0(M)$ defined by
$$
z \to [z,w_z].
$$
We denote this canonical embedding by $\CU^{[id]}$. This selects a
distinguished component of
$$
\pi^{-1}(\CU) \subset  \widetilde \CL_0(M)
$$
and other components can be given by
$$
\CU^{[g]} = g \cdot \CU^{[id]}, \quad g \in \Gamma_\omega
$$
similarly as before.

Combining the constructions from \cite{oh:ajm1} and section \ref{sec:local}, we give

\begin{defn} Let $J=\{J_t\}$ with $|J_t-J_0|_{C^1} < \e_3$ with $\e_3$ sufficiently small.
For any $F \in \CH_{\delta}^{engulf}(M)$ and for the given Darboux neighborhood $U_\Delta$ of
the diagonal $\Delta \subset M\times M$ such that
$$
\phi_F^t(\Delta) \subset \hbox{\rm Int }U_\Delta,
$$
we define
$$
\MM^{[g]}(F,J; \CU) = \{u \in \MM(F,J) \mid (u(\tau)(t), u(\tau)(0))
\in \hbox{\rm Int } U_\Delta^{[g]}  \, \hbox{\rm for all }\, \tau \}
$$
for each $g \in \Gamma_\omega$. Consider the evaluation map
$$
ev:\MM\left(F,J:\CU^{[g]}\right) \to \CU \subset \CL_0(M); \quad ev(u) = u(0).
$$
For each open neighborhood $U_\Delta \subset M\times M$ of
$\Delta \subset U_\Delta$,
we define the {\it local Floer complex} in $\CU^{[g]}$ by
$$
\CS\left(F,J; \CU^{[g]}\right) : = ev\left(\MM(F,J; \CU^{[g]}\right) \subset \CL_0 (M).
$$
We say $\CS(F,J; \CU^{[g]})$ is {\it isolated} in $\CU^{[g]}$
if its closure is contained in $\CU^{[g]}$.
\end{defn}

Using Theorem \ref{thm:local2}, we define the local Floer
homology, denoted by $HF^{[g]}(F,J;\CU)$. Furthermore, the pull-back
of the action functional $\CA_F$ to $\CU^{[g]}$ via the above mentioned
embedding into $\widetilde\CL_0(M)$ provides a filtration on the local Floer complex
$CF^{[g]}(F;\CU)$.

Therefore by considering the parameterized family
$$
\CS(G^s,J; \CU^{[id]}),
$$
the proof of Theorem \ref{thm:local2} implies that if $G \in \CH_{\delta}^{engulf}(M)$ and
$\delta$ sufficiently small, $\CS(J, G^s:\CU^{[id]})$
are isolated in $\CU^{[id]}$ for all $s$ and its homology is isomorphic to
$H_*(M;R)$. For readers' convenience, we provide the detailed comparison argument between
the Hamiltonian Floer complex of $\Fix \phi_G^1$ and the Lagrangian
Floer complex of the pair $(\Delta, \Graph \phi_G^1)$ in Appendix
borrowing from that of section 4.2 \cite{oh:ajm1}.

\subsection{ $\text{Fix }\phi^1_G$ versus $\Delta \cap \text{graph
}\phi^1_G$}

The main goal of this sub-section is to compare the Hamiltonian Floer homology of $G$
with the Lagrangian Floer complex between
$\Delta$ and $\text{graph }\phi_G^1$ in the product $(M, \omega)
\times (M,-\omega)$ when $G \in \CH_{\delta}^{engulf}(M)$
with $\delta$ sufficiently small.

We now compare the local Floer homology $HF^{[id]}(J,G: \CU)$ of
$G \in \CH_{\delta}^{engulf}(M)$ and two versions of its intersection
counterparts, one  $HF_{J_0\oplus -J_0, 0}^{[id]}(\Graph\phi^1_G, \Delta:U_\Delta)$
and the other $HF_{(\phi_G)^*J_0\oplus -J_0, 0\oplus
G}^{[id]}(\Delta, \Delta:U_\Delta)$.

First we note that the two Floer complexes $\MM_{J_0\oplus -J_0,
0}(\Graph\phi^1_G, \Delta:\CU^{[id]})$ and $\MM_{
(\phi_G)^*J_0\oplus -J_0, 0\oplus G}(\Delta, \Delta:\CU^{[id]})$ are canonically
isomorphic by the assignment
$$
(\gamma(t),\gamma(t)) \mapsto
\left((\phi^t_G)^{-1}(\gamma)(t),\gamma(t)\right).
$$
and so the two Lagrangian intersection Floer homology are
canonically isomorphic: Here the above two moduli spaces are the
solutions sets of the following Cauchy-Riemann equations
$$
\begin{cases} \frac{\del U}{\del \tau} +
(J_0\oplus -J_0) \frac{\del U}{\del t}  = 0 \\
 U(\tau,0) \in \text{graph }\phi^1_G, \, U(\tau, 1) \in \Delta
\end{cases}
$$
and
$$
\begin{cases} \frac{\del U}{\del \tau} +
((\phi_G^1)^*J_0) \oplus (-J_0)  \Big(\frac{\del U}{\del t}
- X_{0\oplus G}(U)\Big) = 0 \\
U(\tau, 0) \in \Delta, \, U(\tau,1) \in \Delta
\end{cases}
$$
respectively, where $U = (u_1,u_2) : \R \times [0,1] \to M\times
M$. The relevant action functionals for these cases are given by
\be\label{eq:AA0}
\CA_0([\Gamma, W]) = - \int W^*(\omega \oplus -\omega)
\ee
on $\widetilde \Omega(\Graph\phi^1_G, \Delta: M \times M)$
and
\be\label{eq:AAG}
\CA_{0\oplus G}([\Gamma, W]) = \CA_0 (\Gamma, W)- \int_0^1
(0\oplus G)(\Gamma(t), t)\, dt
\ee
on $\widetilde \Omega(\Delta, \Delta: M \times M)$ where we denote
$$
\Omega(\Graph\phi^1_G, \Delta: M \times M) = \{ \Gamma:
[0,1] \to M\times M \mid \Gamma(0) \in
\text{graph }\phi_G^1, \, \Gamma(1) \in \Delta, \}
$$
and similarly for $\Omega(\Graph\phi^1_G, \Delta: M \times
M)$. Again the `tilde' means the  covering space which can be
represented by the set of pairs $[\Gamma, W]$ in a similar way.
The relations
between the action functionals \eqref{eq:AA0}, \eqref{eq:AAG} and
$\CA_G$ are evident and respect the filtration under the natural
correspondences.

Next we will attempt to compare
$$
HF^{[id]}(G,J;\CU), \quad HF_{J_0\oplus -J_0,
G\oplus 0}^{[id]}(\Delta, \Delta:U_\Delta).
$$
Without loss of any generality,
we will concern Hamiltonians $G$ such that $G\equiv 0$ near $t =
0,\, 1$, which one can always achieve by perturbing $G$ without
changing its time-one map.

\emph{There is no direct way of identifying the
corresponding Floer complexes between the two}.

As an intermediate case, we consider the Hamiltonian $G^\prime: M
\times [0,1]$ defined by
$$
G^\prime(x,t) = \begin{cases} 2 G(x,2t)   \quad  & \text{for }\, 0\leq t \leq \frac1{2}\\
0 & \text{for } \, \frac1{2} \leq t \leq 1
\end{cases},
$$
and the assignment
\be\label{eq:diffG'}
(u_0,u_1) \in \MM_{J_0\oplus -J_0, G\oplus 0}^{[id]}(\Delta, \Delta:U_\Delta)
\mapsto v \in \MM(J, G^\prime: \CU^{[id]})
\ee
with $v(\tau,t) := u_0\# \overline u_1(\tau,t)$. Here the map
$u_0 \# \overline u_1: [0,1] \to M$ is the map defined by
$$
u_0 \# \overline u_1(\tau,t) =
\begin{cases} u_0(2\tau,2t) \quad & \text{for }\, 0\leq t \leq \frac{1}{2} \\
u_1(2\tau,1-2t) & \text{for } \, \frac{1}{2}\leq t \leq 1
\end{cases}
$$
is well-defined and continuous because
\beastar
u_0(\tau, 1) & = & u_0(\tau,0) = \overline u_1(\tau,0)\\
\overline u_1(\tau,1) & = & u_0 (\tau,1) = u_0(\tau,0).
\eeastar
Furthermore near $t = 0, \, 1$, this is smooth (and so
holomorphic) by the elliptic regularity since $G^\prime$ is smooth
(Recall that we assume that $G \equiv 0$ near $t =0,\, 1$.
Conversely, any element $v \in \MM(J,G^\prime:\CU^{[id]})$ can be written
as the form of $u_0 \# \overline u_1$ which is uniquely determined
by $v$. This proves that \eqref{eq:diffG'} is a diffeomorphism from
$\MM_{J_0\oplus -J_0, G\oplus 0}^{[id]}(\Delta, \Delta:U_\Delta)$ to $\MM(J,
G^\prime: \CU^{[id]})$ which induces a filtration-preserving isomorphism
between $HF_{J_0\oplus -J_0, G\oplus 0}^{[id]}(\Delta, \Delta:U_\Delta)$ and
$HF(J, G^\prime: \CU^{[id]})$.

Finally, we need to relate $HF(J,G:\CU^{[id]})$ and $HF(J, G^\prime:\CU^{[id]})$.
For this we note that $G$ and $G^\prime$ can be connected by a one-parameter
family $G^{para} = \{G^s\}_{0\leq s\leq 1}$ with
$$
G^s(x,t):=
\begin{cases} \frac2{1+s}G(x,\frac2{1+s}t)  \quad &\text{for }\, 0\leq t \leq \frac{s}{2} \\
0 & \text{for } \frac{s}{2} \leq t \leq
1.
\end{cases}
$$
And we have
$$
\phi^1_{G^s} = \phi^1_G\quad \text{for all $s \in [0,1]$}.
$$
Therefore their spectra coincide, i.e., $\Spec(G) = \Spec(G^{\prime s}) = \Spec(G')$.
Then there exists an isomorphism
$$
h^{adb}_{G^{para},J}: CF(G^\prime :\CU^{[id]}) \to  CF(G :\CU^{[id]})
$$
respects the filtration and so the induced homomorphism in its
homology
$$
h^{adb}_{G^{para},J}: HF(J,G^\prime:\CU^{[id]}) \to  HF(J,G :\CU^{[id]})
$$
becomes a filtration-preserving isomorphism. See \cite{kerman}, \cite{usher:depth},
\cite{oh:book} for such a construction.

\end{document}